\date{}
\newcommand{\hot}[1]{\textcolor{red}{#1}}
\newcommand{\cold}[1]{\textcolor{blue}{#1}}
\definecolor{orange}{rgb}{1,.5,0}
\newcommand{\Z}{{\mathbb Z}}
\newcommand{\R}{{\mathbb R}}
\newcommand{\Q}{{\mathbb Q}}
\newcommand{\C}{{\mathbb C}}
\newcommand{\T}{{\mathbb T}}
\newcommand{\bbI}{{\mathbb I}}
\newcommand{\CE}{{\mathcal E}}
\newcommand{\CH}{{\mathcal H}}
\newcommand{\CJ}{{\mathcal J}}
\newcommand{\CV}{{\mathcal V}}
\newcommand{\sq}{{\mathrm{sq}}}
\newcommand{\sqn}{{\mathrm{sqn}}}
\newcommand{\hex}{{\mathrm{hex}}}
\newcommand{\tri}{{\mathrm{tri}}}
\newcommand{\bthe}{\bm{\theta}}
\newcommand{\tbthe}{\widetilde{\bm{\theta}}}
\newcommand{\tthe}{\widetilde{\theta}}
\newcommand{\bbe}{\bm{\beta}}
\newcommand{\set}[1]{\left\{#1\right\}}
\newtheorem{theorem}{Theorem}[section]
\theoremstyle{definition}
\newtheorem*{remark*}{Remark}
\theoremstyle{definition}
\newtheorem{remark}[theorem]{Remark}
\theoremstyle{definition}
\theoremstyle{definition}
\newtheorem*{ex*}{Example}
\newtheorem{lemma}[theorem]{Lemma}
\numberwithin{theorem}{section}
\numberwithin{equation}{section}
\begin{document}
\title[Bethe--Sommerfeld Conjecture for Triangular, Square and Hexagonal Lattices]{Discrete Bethe--Sommerfeld Conjecture for \\ Triangular, Square, and Hexagonal Lattices}

\author[J.\ Fillman]{Jake Fillman}
\author[R.\ Han]{Rui Han}

\begin{abstract} We study discrete Schr\"odinger operators on the graphs corresponding to the triangular lattice, the hexagonal lattice, and the square lattice with next-nearest neighbor interactions. 
For each of these lattice geometries, we analyze the behavior of small periodic potentials. In particular, we provide sharp bounds on the number of gaps that may perturbatively open, we describe sharp arithmetic criteria on the periods that ensure that no gaps open, and we characterize those energies at which gaps may open in the perturbative regime. In all three cases, we provide examples that open the maximal number of gaps and estimate the scaling behavior of the gap lengths as the coupling constant goes to zero.
 \end{abstract}
\maketitle

\setcounter{tocdepth}{1}
\tableofcontents

\section{Introduction}

The Bethe--Sommerfeld conjecture is the following statement: for any $d \geq 2$ and any periodic function $V:\R^d \to \R$, the spectrum of the Schr\"odinger operator 
\[
L_V := -\nabla^2 + V\]
has only finitely many gaps. This was studied by many people with important advances in \cite{HelMoh98,Karp97,PopSkr81,Skr79,Skr84,Skr85,Vel88}, and culminating in the paper of Parnovskii \cite{Parn2008AHP}. One way to think about the Bethe--Sommerfeld conjecture is that any energy $E$ that is very large relative to the potential $V$ lies in the spectrum of $L_V$. Since discrete Schr\"odinger operators are bounded, the high-energy region is absent, so the appropriate discrete version of the Bethe--Sommerfeld conjecture lies in the region of small $V$. Discrete versions of the conjecture were proved on square lattices by Embree--Fillman in dimension $d=2$ \cite{EmbFil2017} and by Han--Jitomirskaya in arbitrary dimensions $d \geq 2$ \cite{HanJit2017}. In those works, the spectrum of a discrete periodic Schr\"odinger operator on the square lattice $\ell^2(\Z^d)$ with a small potential was shown to consist of at most two intervals. Moreover, they showed that as soon as at least one period of the potential is odd, then the spectrum is an interval, and, in the event that a gap opens perturbatively, it must happen at the exceptional energy $E=0$.

Many interesting physical models occur with different underlying lattice geometries beyond the standard square lattice. One of the most prominent such models is supplied by graphene, a two-dimensional material that consists of carbon atoms at the vertices of a hexagonal lattice. The fascinating properties of graphene have led to a substantial amount of attention in mathematics and physics, see e.g.\ \cite{BZ2018,BHJ2018,CGPNG,DelMon2010,FW12,HKR2016,KP07,N11} and references therein. 
In view of this, we are motivated to study the Bethe--Sommerfeld conjecture for the hexagonal lattice and for the corresponding dual lattice (the triangular lattce).

In addition to the hexagonal and triangular lattices, we also study the square lattice with next-nearest neighbor interactions, which is motivated by the extended Harper model (EHM). The EHM was proposed by Thouless \cite{Thouless83} and has also led to a lot of study in mathematics and physics \cite{AJM17,H17,H18,HJ17,Thouless94,JM15}; it corresponds to an electron in a square lattice that interacts not only with its nearest neighbors but also its next-nearest neighbors.
In the following, we will refer to square lattice with next-nearest neighbor interactions as the {\it EHM lattice}, in order to distinguish it from the standard square lattice.

{Let us mention in particular the closely related work \cite{HKR2016}.} In \cite{HKR2016}, Helffer, Kerdelhu\'e and Royo-Letelier developed a Chambers analysis for magnetic Laplacians on the hexagonal lattice (and its dual lattice: triangular lattice) with rational flux.
They showed that for a non-trivial rational flux $p/q\notin\Z$, the magnetic Laplacians on hexagonal and triangular lattices have non-overlapping (possibly touching) bands.
This recovers a similar feature of the square lattice \cite{BelSim82}.
However, unlike the square lattice that has no touching bands except at the center for $q$ even \cite{VMou89}, 
they were able to give an explicit example of non-trivial touching bands for hexagonal and triangular lattices. 
Indeed they showed that the triangular Laplacian has touching bands at energy $E=-\sqrt{3}$ for $p/q=1/6$, and the hexagonal Laplacian has touching bands at energies $E=\pm \sqrt{3}$ and $0$ for $p/q=1/2$.
Therefore, the underlying geometry is greatly responsible for the formation of touching bands.
But it {has remained} unclear that whether there will be other touching bands for different fluxes (and if any, what are the locations).
In our work we are able to give a sharp criterion of the formation of touching bands for the free Laplacians on these lattices and the EHM lattice, see Theorems \ref{t:bsc:tri}, \ref{t:bsc:hex} and \ref{t:bsc:nnn}.

Motivated by these models, we prove the Bethe--Sommerfeld conjecture for the triangular, hexagonal, and EHM lattices. 
Similar to the square lattice case, we show that small perturbations of the free Laplacian may only open gaps at certain {\it exceptional energies}.
Our proof uses the perturb-and-count technique developed in \cite{HanJit2017}.
The overall  strategy is to argue by contradiction. 
Namely, we assume two adjacent spectral bands of the free Laplacian have a trivial overlap containing a single energy $E$.
Then, we carefully choose a Floquet parameter and perturb all the Floquet eigenvalues along two different directions. 
It is then argued that different directions lead to different counting of eigenvalues that move above/below $E$, hence a contradiction.
At the exceptional energies, we are able to develop a {\it sharp} criterion, in terms of the periods, of whether the gaps could possibly open {under an infinitesimal perturbation}.
We also construct potentials that do open (the theoretically existing) gaps at these exceptional energies.

Although the general strategy follows that of \cite{HanJit2017}, {there are several challenges to overcome in the present work.}:
\begin{itemize}
\item 
The Floquet parameters and perturbation directions that we choose in the perturb-and-count technique are strongly model-dependent in a subtle fashion. For example, at non-exceptional energies, we locate Floquet parameters and a perturbation direction in a way such that the Floquet eigenvalues with vanishing linear terms have quadratic terms of the same sign along this direction. 
At the exceptional energy of the triangular lattice, we choose two directions such that the eigenvalues with vanishing gradients have quadratic terms of different signs along the two directions; for a more detailed discussion, see Remark~\ref{rem:tri}.
This is similar to what was done in \cite{HanJit2017} for the square lattice case.
However, for the EHM lattice, any direction will lead to the same number of positive and negative quadratic terms; see Remark \ref{rem:sqn}.
This issue is resolved by a new construction: we find a direction that moves approximately $2/3$ of the degenerate eigenvalues  up while the other $1/3$ move down.
All these constructions depend heavily on the Floquet representation of the eigenvalues, and thus get more difficult as the underlying geometry gets more complicated.
\item Applying the perturb-and-count ideas directly to the hexagonal lattice  is quite difficult,
due to the fact that the Floquet eigenvalues do not have simple expressions; compare \eqref{eq:hexTriBandRel}.
However, one can relate Laplacians and Floquet matrices for the triangular and hexagonal lattices in a fairly elegant fashion ({see \cite{HKR2016} and our \eqref{eq:hexSquareTri}}). Thus, we prove the Bethe--Sommerfeld conjecture directly for the triangular lattice and then derive the corresponding statement for the hexagonal lattice via a somewhat soft argument.
\item Because of the more complicated structure of the lattices involved, constructing potentials that open gaps at the exceptional energies is substantially more difficult than in the square lattice. 
In particular, we need to construct (2,2)-periodic potentials that live on eight vertices for the hexagonal lattice, and (3,3)-periodic potential for the EHM lattice.
In this paper we develop an {robust} technique to study these finite volume problems in a sharp way.
Indeed, we can not only prove that a gap exists, but also estimate its size up to a constant factor (see Theorems~\ref{thm:triExampleGapLength}, \ref{thm:hexQ}, and \ref{thm:nnnExGapLength}). 
In the case of the triangular lattice, we are even able to use our technique \emph{exactly} compute the gap, not only estimate its size (Theorem~\ref{thm:triExampleGapLength}).
\end{itemize}

\bigskip

\subsection{Main Results}

Let us now describe more precisely the setting in which we work and the results that we prove. By a \textit{graph}, we shall mean a pair $\Gamma = (\CV, \CE)$ where $\CV$ is a nonempty set and $\CE$ is a nonempty subset of $\CV \times \CV$ with the following properties:
\begin{itemize}
\item For no $v \in \CV$ does one have $(v,v) \in \CE$;
\item If $(u,v) \in \CE$, then $(v,u) \in \CE$.
\end{itemize}
If $(u,v) \in \mathcal{E}$, we write $u\sim v$ and we say that $u$ and $v$ are neighbors or neighboring vertices. We think of $\CE$ as the set of \emph{directed edges}; $(u,v)$ represents the edge that originates at $u$ and terminates at $v$.

Given such a graph, we consider $\mathcal{H}_\Gamma = \ell^2(\mathcal{V})$ and the associated \emph{graph Laplacian} $\Delta_\Gamma: \mathcal{H}_\Gamma \to \mathcal{H}_\Gamma$, which acts via
\[
[\Delta_\Gamma \psi]_u
=
\sum_{v \sim u} \psi_v,
\quad
u \in \mathcal{V}, \; \psi \in \mathcal{H}_\Gamma.
\]
Technically, this is the adjacency operator of the graph. Other authors use $\psi_v - \psi_u$ where we have only $\psi_v$. Our convention is slightly more natural for the setting in which we wish to work. Concretely, all of the graphs that we consider in the present work have uniform degree (all vertices in a given graph have the same number of incident edges), and hence leaving off the $-\psi_u$ term merely costs us a multiple of the identity operator, and it simplifies the appearance of a few calculations.

By a \emph{Schr\"odinger operator} on $\Gamma$, we mean an operator of the form $H_Q = H_{\Gamma,Q} = \Delta_\Gamma + Q$, where $Q:\CV \to \R$ is a bounded function that acts on $\CH_\Gamma$ by multiplication:
\[
[Q\psi]_u
=
Q(u) \psi_u,
\quad
u \in \mathcal{V}, \; \psi \in \mathcal{H}_\Gamma.
\]
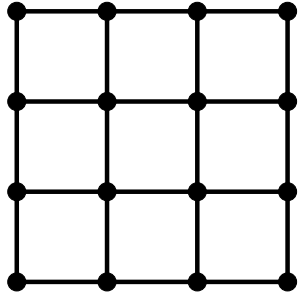
\begin{figure*}[t]
\begin{tikzpicture}[yscale=.6,xscale=.6]
\draw [-,line width = .06cm] (0,0) -- (6,0);
\draw [-,line width = .06cm] (0,0) -- (0,6);
\draw [-,line width=.06cm] (0,2) -- (6,2);
\draw [-,line width = .06cm] (2,0) -- (2,6);
\draw [-,line width=.06cm] (0,4) -- (6,4);
\draw [-,line width = .06cm] (4,0) -- (4,6);
\draw [-,line width=.06cm] (0,6) -- (6,6);
\draw [-,line width = .06cm] (6,0) -- (6,6);
\filldraw[color=black, fill=black](0,0) circle (.2);
\filldraw[color=black, fill=black](2,0) circle (.2);
\filldraw[color=black, fill=black](4,0) circle (.2);
\filldraw[color=black, fill=black](6,0) circle (.2);
\filldraw[color=black, fill=black](0,2) circle (.2);
\filldraw[color=black, fill=black](2,2) circle (.2);
\filldraw[color=black, fill=black](4,2) circle (.2);
\filldraw[color=black, fill=black](6,2) circle (.2);
\filldraw[color=black, fill=black](0,4) circle (.2);
\filldraw[color=black, fill=black](2,4) circle (.2);
\filldraw[color=black, fill=black](4,4) circle (.2);
\filldraw[color=black, fill=black](6,4) circle (.2);
\filldraw[color=black, fill=black](0,6) circle (.2);
\filldraw[color=black, fill=black](2,6) circle (.2);
\filldraw[color=black, fill=black](4,6) circle (.2);
\filldraw[color=black, fill=black](6,6) circle (.2);
\end{tikzpicture}
\caption{The square lattice.}
\end{figure*}
In the present work, we study $\Z^2$-periodic graphs. That is, we consider graphs whose vertices $\mathcal{V}$ comprise a subset of $\R^2$ and for which there exist linearly independent translations $\bm{a}_1, \bm{a}_2 \in \R^2$ which leave $\Gamma$ invariant. That is to say:
\begin{itemize}
\item For any vertex $v \in \CV$, $v + \bm{a}_j \in \CV$ for $j=1,2$;
\item For any edge $(u,v) \in \CE$, $(u + \bm{a}_j,v + \bm{a}_j) \in \CE$  for $j=1,2$.
\end{itemize}
We will then be most interested in studying the case when the potential $Q$ is itself periodic. In general, we will say that $Q:\CV \to \R$ is $\bm{p} = (p_1,p_2)$-periodic for some $p_1,p_2 \in \Z_+$ if and only if 
\[
Q(u+p_1 \bm{a}_1) = Q(u + p_2 \bm{a}_2) = Q(u),\quad
\text{for all } u \in \CV.
\]

The square lattice is the graph with vertices $\CV_\sq = \Z^2$ and where 
\[
\bm{n} \sim \bm{n}'
\iff
\| \bm{n} - \bm{n'} \|
=
1.
\]
Here and throughout the paper, $\|\cdot\|$ denotes the Euclidean norm on $\R^2$. It is easy to see that the associated Laplacian acts on $\ell^2(\Z^2)$ via
\[
[\Delta_\sq \psi]_{n,m}
=
\psi_{n-1,m} + \psi_{n+1,m} + \psi_{n,m-1} + \psi_{n,m+1}.
\]

Part of the motivation for the present work comes from \cite{EmbFil2017, HanJit2017, KrugPreprint}. In \cite{EmbFil2017}, Embree and Fillman showed that if $Q: \Z^2 \to \R$ is $(p_1,p_2)$-periodic and sufficiently small, then $\sigma(\Delta_\sq+Q)$ consists of one or two intervals and that the spectrum consists of exactly one interval whenever at least one of $p_1$ or $p_2$ is odd, which generalized the work of Kr\"uger, who proved a similar result under the stricter condition that the periods were coprime \cite{KrugPreprint}. In \cite{HanJit2017}, Han and Jitomirskaya showed that if $Q:\Z^d \to \R$ is $(p_1,\ldots,p_d)$-periodic and small, then the same results hold true: the spectrum has no more than one gap and has no gaps as long as at least one period is odd.

\subsection{The Triangular Lattice} 
\begin{figure*}[b]
 \begin{minipage}{0.45\textwidth}
 \centering
\begin{tikzpicture}[yscale=.85,xscale=.85]
\draw [-,line width = .06cm] (0,0) -- (6,0);
\draw [-,line width = .06cm] (0,{sqrt(3)}) -- (6,{sqrt(3)});
\draw [-,line width = .06cm] (0,{2*sqrt(3)}) -- (6,{2*sqrt(3)});
\draw [-,line width = .06cm] (0,{3*sqrt(3)}) -- (6,{3*sqrt(3)});
\draw [-,line width=.06cm] (0,{2*sqrt(3)}) -- (1,{3*sqrt(3)});
\draw [-,line width=.06cm] (0,0) -- (3,{3*sqrt(3)});
\draw [-,line width=.06cm] (2,0) -- (5,{3*sqrt(3)});
\draw [-,line width=.06cm] (4,0) -- (6,{2*sqrt(3)});
\draw [-,line width=.06cm] (0,{2*sqrt(3)}) -- (2,0);
\draw [-,line width=.06cm] (1,{3*sqrt(3)}) -- (4,0);
\draw [-,line width=.06cm] (3,{3*sqrt(3)}) -- (6,0);
\draw [-,line width=.06cm] (5,{3*sqrt(3)}) -- (6,{2*sqrt(3)});
\draw [->,line width=.06cm,color=blue] (1,{sqrt(3)}) -- (2,{2*sqrt(3)});
\filldraw[color=black, fill=black](0,0) circle (.2);
\filldraw[color=black, fill=black](2,0) circle (.2);
\filldraw[color=black, fill=black](4,0) circle (.2);
\filldraw[color=black, fill=black](6,0) circle (.2);
\filldraw[color=black, fill=black](1,{sqrt(3)}) circle (.2);
\filldraw[color=black, fill=black](3,{sqrt(3)}) circle (.2);
\filldraw[color=black, fill=black](5,{sqrt(3)}) circle (.2);
\filldraw[color=black, fill=black](0,{2*sqrt(3)}) circle (.2);
\filldraw[color=black, fill=black](2,{2*sqrt(3)}) circle (.2);
\filldraw[color=black, fill=black](4,{2*sqrt(3)}) circle (.2);
\filldraw[color=black, fill=black](6,{2*sqrt(3)}) circle (.2);
\filldraw[color=black, fill=black](1,{3*sqrt(3)}) circle (.2);
\filldraw[color=black, fill=black](3,{3*sqrt(3)}) circle (.2);
\filldraw[color=black, fill=black](5,{3*sqrt(3)}) circle (.2);
\draw [->,line width=.06cm,color=blue] (1,{sqrt(3)}) -- (2,{2*sqrt(3)});
\draw [->,line width=.06cm,color=blue] (1,{sqrt(3)}) -- (3,{sqrt(3)});
\node [above] at (1,{sqrt(3)+.3}) {\cold{$\bm{a}_2$}};
\node [below] at (1.7,{sqrt(3)-.1}) {\cold{$\bm{a}_1$}};
\end{tikzpicture}
\caption{A portion of the triangular lattice}\label{fig:trilat}
\end{minipage}
\hfill
 \begin{minipage}{0.45\textwidth}
 \centering
\begin{tikzpicture}[yscale=.75,xscale=.75]
\draw [-,line width = .06cm] (0,0) -- (6,0);
\draw [-,line width = .06cm] (0,0) -- (0,6);
\draw [-,line width=.06cm] (0,2) -- (6,2);
\draw [-,line width = .06cm] (2,0) -- (2,6);
\draw [-,line width=.06cm] (0,4) -- (6,4);
\draw [-,line width = .06cm] (4,0) -- (4,6);
\draw [-,line width=.06cm] (0,6) -- (6,6);
\draw [-,line width = .06cm] (6,0) -- (6,6);
\draw [-,line width = .06cm] (0,6) -- (6,0);
\draw [-,line width = .06cm] (2,0) -- (0,2);
\draw [-,line width = .06cm] (4,0) -- (0,4);
\draw [-,line width = .06cm] (2,6) -- (6,2);
\draw [-,line width = .06cm] (6,4) -- (4,6);
\filldraw[color=black, fill=black](0,0) circle (.2);
\filldraw[color=black, fill=black](2,0) circle (.2);
\filldraw[color=black, fill=black](4,0) circle (.2);
\filldraw[color=black, fill=black](6,0) circle (.2);
\filldraw[color=black, fill=black](0,2) circle (.2);
\filldraw[color=black, fill=black](2,2) circle (.2);
\filldraw[color=black, fill=black](4,2) circle (.2);
\filldraw[color=black, fill=black](6,2) circle (.2);
\filldraw[color=black, fill=black](0,4) circle (.2);
\filldraw[color=black, fill=black](2,4) circle (.2);
\filldraw[color=black, fill=black](4,4) circle (.2);
\filldraw[color=black, fill=black](6,4) circle (.2);
\filldraw[color=black, fill=black](0,6) circle (.2);
\filldraw[color=black, fill=black](2,6) circle (.2);
\filldraw[color=black, fill=black](4,6) circle (.2);
\filldraw[color=black, fill=black](6,6) circle (.2);
\end{tikzpicture}
\caption{The triangular lattice after shearing.}\label{fig:trishear}
\end{minipage}
\end{figure*}

The first graph that we consider is the \emph{triangular lattice}. The graph has vertices
\[
\CV_\tri
=
\set{n \bm{a}_1 + m \bm{a}_2 : n,m \in \Z},
\]
where the generating vectors are
\[
\bm{a}_1
=
\begin{bmatrix}
1 \\ 0
\end{bmatrix},
\quad
\bm{a}_2
=
\frac{1}{2} \begin{bmatrix}
1 \\ \sqrt{3}
\end{bmatrix}.
\]
One then declares $v \sim w$ for $v,w \in \mathcal{V}$ if $\|v - w\| = 1$. Thus, every $v \in \mathcal{V}$ has 6 neighbors; more specifically, if $v = n\bm{a}_1 + m \bm{a}_2$, then $v$ has neighbors
\[
(n \pm1) \bm{a}_1 + m \bm{a}_2, \quad
n\bm{a}_1 + (m\pm 1) \bm{a}_2,\quad
(n \pm 1) \bm{a}_1 + (m\mp 1) \bm{a}_2.
\]
Consequently, after identifying $n \bm{a}_1 + m \bm{a}_2$ with the point $(n,m) \in \Z^2$, we may view the Laplacian on the triangular lattice as an operator on $\ell^2(\Z^2)$ via
\begin{equation} \label{eq:triLaplacianSqVersion}
[\Delta_{\rm tri} \psi]_{n,m}
=
[\Delta_{\rm sq} \psi]_{n,m}
+
\psi_{n-1,m+1} + \psi_{n+1,m-1}.
\end{equation}
This correspondence amounts to shearing and stretching the the triangular lattice, and essentially maps the triangular lattice to the square lattice with skewed next-nearest-neighbor interactions added. See Figures~\ref{fig:trilat} and \ref{fig:trishear}.

\begin{theorem}[Bethe--Sommerfeld for the triangular lattice] \label{t:bsc:tri}
For all $\bm{p} = (p_1,p_2 )\in \Z_+^2$, there is a constant $c = c_{\bm{p}} > 0$ such that, if $Q:\CV_\tri \to \R$ is $\bm{p}$-periodic and $\|Q\|_\infty \leq c$, the following hold true for $H_Q = \Delta_\tri + Q$:
\begin{enumerate}
\item[{\rm(1)}] $\sigma(H_Q)$ consists of no more than two intervals.
\item[{\rm(2)}] If at least one of $p_1$ or $p_2$ is odd, then $\sigma(H_Q)$ consists of a single interval.
\end{enumerate}
Moreover, the gap in the first setting may only open at the energy $E = -2$.
\end{theorem}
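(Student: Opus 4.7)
The plan is to follow the perturb-and-count method of Han--Jitomirskaya \cite{HanJit2017}, suitably adapted to the triangular geometry. By Bloch--Floquet theory, $H_Q$ is unitarily equivalent to a direct integral of $p_1p_2 \times p_1p_2$ Floquet matrices $\{M_Q(\bthe)\}_{\bthe \in \T^2}$, so $\sigma(H_Q) = \bigcup_\bthe \sigma(M_Q(\bthe))$. For $Q = 0$, the eigenvalues of $M_0(\bthe)$ are the values of the dispersion
\[
\CE(\phi_1, \phi_2) := 2\cos\phi_1 + 2\cos\phi_2 + 2\cos(\phi_1 - \phi_2)
\]
at the shifted arguments $\phi_k = (\theta_k + 2\pi j_k)/p_k$, $(j_1, j_2) \in \{0, \ldots, p_1 - 1\} \times \{0, \ldots, p_2 - 1\}$. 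A direct analysis of $\CE$ reveals that its only critical value in the interior of $[-3,6]$ is $E = -2$, attained at three saddle points $(\pi, \pi), (\pi, 0), (0, \pi)$ whose Hessians are indefinite. Moreover the level set $\{\CE = -2\}$ is the union of three closed curves $\{\phi_1 = \pi\}$, $\{\phi_2 = \pi\}$, $\{\phi_1 \equiv \phi_2 + \pi \pmod{2\pi}\}$ on each of which $\CE$ is identically $-2$. For any other $E \in (-3, 6)$ the level set $\{\CE = E\}$ is a smooth curve with $\nabla\CE \neq 0$.

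I would argue by contradiction. Assume that a gap opens at some $E \in (-3,6)$ under a small perturbation. Continuity of the bands in $Q$ forces adjacent bands $B_j, B_{j+1}$ of $M_0$ to touch tangentially at $E$: there is a Floquet parameter $\bthe^*$ and a multiplicity $d \geq 2$ such that $E$ is a $d$-fold eigenvalue of $M_0(\bthe^*)$, with preimages $\phi^{(1)}, \ldots, \phi^{(d)}$ on $\{\CE = E\}$. For $E \neq -2$, each $\nabla \CE(\phi^{(i)}) \neq 0$, so along any direction $\bm{d} \in \R^2$ the $d$ coincident eigenvalues split linearly at rates $\nabla \CE(\phi^{(i)}) \cdot (d_1/p_1, d_2/p_2)$; for eigenvalues whose linear rate vanishes, the splitting is instead governed by the quadratic form $\bm{d}^T H(\phi^{(i)}) \bm{d}$. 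I would choose $\bthe^*$ and a direction $\bm{d}$ so that all ``linearly-vanishing'' eigenvalues have the same sign of this quadratic, and then exhibit a second direction $\bm{d}'$ producing an incompatible count of eigenvalues that rise above $E$. Because the count must be direction-independent (determined by the hypothesized band indices), this yields a contradiction and rules out tangential touching at any $E \neq -2$.

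For the exceptional energy $E = -2$, some preimages $\phi^{(i)}$ may be the saddle critical points of $\CE$, where the first-order perturbation vanishes and the quadratic term $\bm{d}^T H(\phi^{(i)}) \bm{d}$ drives the splitting. Since each saddle Hessian is indefinite (with eigenvalues $\pm 2$ at $(\pi,\pi)$ and analogous mixed-sign pairs elsewhere), one can select two directions $\bm{d}_1, \bm{d}_2$ along which $\bm{d}_k^T H \bm{d}_k$ takes opposite signs at the saddle preimages, yielding different counts of rising eigenvalues in the two directions. The assumption that at least one $p_k$ is odd is what lets us locate a suitable $\bthe^*$ (e.g., a translate of $0$ involving a factor of $\pi$) so that the Floquet grid meets the three degenerate curves of $\{\CE=-2\}$ in a controlled pattern, making the non-critical and critical contributions align into incompatible totals in the two directions; this yields part (2). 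Part (1) then follows because $E = -2$ is the unique candidate touching energy, so at most one gap can open perturbatively.

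The hardest part will be the delicate, model-dependent choice of $\bthe^*$ and the two directions $\bm{d}_1, \bm{d}_2$. Unlike the square-lattice setting, where Floquet matrices factor cleanly, the cross term $\cos(\phi_1 - \phi_2)$ in the triangular dispersion couples the two variables and distorts the level-set geometry, requiring a case-by-case analysis of the gradient and Hessian signs at the preimages of $E$. The $E = -2$ argument is especially intricate because the three degenerate lines of $\{\CE=-2\}$ produce a mixture of critical and non-critical preimages whose linear and quadratic contributions must collectively sum to incompatible counts under the oddness hypothesis on $\bm{p}$.
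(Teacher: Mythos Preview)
Your proposal is correct and follows essentially the same perturb-and-count strategy as the paper: reduce to showing that every $E\in(-3,6)\setminus\{-2\}$ (resp.\ $E=-2$ under the oddness hypothesis) lies in the interior of some free band, argue by contradiction assuming $E=\max F_k=\min F_{k+1}$, and derive incompatible eigenvalue counts along two perturbation directions. The paper's specific constructions for the step you flag as hardest are: for $E\neq -2$, take $\bbe_1\propto(p_1,p_2)$ so the cross term $\cos(\phi_1-\phi_2)$ drops out of the quadratic expansion, reducing membership in $\CJ_{\bbe_1}^0$ to $\sin\phi_1+\sin\phi_2=0$ and forcing a common sign $\cos\phi_1+\cos\phi_2=-1+\sqrt{E+3}$ on the quadratic term (this is the content of Lemma~\ref{lem:constructiontri}).

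One sharpening of your $E=-2$ discussion: the precise role of oddness (say $p_1$ odd) is not to control a general ``pattern'' of intersections with the three degenerate lines, but to guarantee that with $\tthe_1=0$ the Floquet grid hits \emph{exactly one} of the three saddles $(0,\pi),(\pi,0),(\pi,\pi)$, since $\phi_1=\pi$ is unreachable. With a single saddle in $\CJ_0$, the indefiniteness of its Hessian lets you choose $\bbe_1=(0,1)$ and $\bbe_2\sim(2p_1,p_2)$ to move it in opposite directions. If both periods are even, all three saddles are hit, and (as the paper notes in Remark~\ref{rem:tri}) the three quadratic forms $-\beta_2(\beta_1/p_1-\beta_2/p_2)$, $\beta_1(\beta_1/p_1-\beta_2/p_2)$, $\beta_1\beta_2$ always split $2$-to-$1$ in sign regardless of direction, so no imbalanced count is possible---this is why your plural ``saddle preimages'' needs to become singular for the argument to close.
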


This theorem is sharp vis-\`a-vis the number of intervals in the spectrum and the arithmetic restrictions on the periods. Concretely, we exhibit a $(2,2)$-periodic potential that perturbatively opens a gap at $-2$.

\begin{theorem} \label{t:triExamples}
There exists $Q:\CV_\tri \to \R$ which is $(2,2)$-periodic, such that $\sigma(H_{\lambda Q})$ has exactly two connected components for any sufficiently small $\lambda > 0$.
\end{theorem}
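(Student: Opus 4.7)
The plan is to produce an explicit $(2,2)$-periodic potential $Q$ for which $-2\notin\sigma(H_{\lambda Q})$ for all sufficiently small $\lambda>0$. Combined with the continuity of the spectrum in $\lambda$ and with Theorem~\ref{t:bsc:tri} (which says $\sigma(H_{\lambda Q})$ has at most two components and that any perturbative gap can open only at $E=-2$), this will force the spectrum to consist of exactly two intervals. To carry this out I would first pass to the Floquet--Bloch decomposition $H_{\lambda Q}=\int^{\oplus}_{\bthe\in[0,\pi)^2}H_{\lambda Q}(\bthe)\,\dd\bthe$, in which each fiber is a $4\times 4$ Hermitian matrix on $\ell^2((\Z/2\Z)^2)$. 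Writing $c_j=\cos\theta_j$, $c_{12}=\cos(\theta_1-\theta_2)$, and $X=\bigl(\begin{smallmatrix}0&1\\1&0\end{smallmatrix}\bigr)$, a direct computation based on \eqref{eq:triLaplacianSqVersion} gives the Kronecker form
\[
H_0(\bthe) \;=\; 2c_1\,X\otimes I \;+\; 2c_2\,I\otimes X \;+\; 2c_{12}\,X\otimes X,
\]
whose eigenvalues are $2(\epsilon_1 c_1+\epsilon_2 c_2+\epsilon_1\epsilon_2 c_{12})$ for $\epsilon_1,\epsilon_2\in\{\pm 1\}$. Hence the ``bad set'' $S:=\{\bthe:-2\in\sigma(H_0(\bthe))\}$ is the union of the three curves $\{\theta_1=0\}$, $\{\theta_2=0\}$, $\{\theta_1=\theta_2\}$ in $[0,\pi)^2$, on which $-2$ has multiplicity~$2$, meeting at the origin where the multiplicity jumps to $3$.

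I would next pick an explicit potential, for instance $Q(0,0)=Q(1,0)=Q(0,1)=1$ and $Q(1,1)=0$. In the Hadamard basis (which diagonalizes $H_0(\bthe)$ for every $\bthe$), the multiplication operator $Q$ becomes the discrete Walsh--Fourier transform of its values: the diagonal entries all equal $\bar Q:=\tfrac14\sum_{k,l}Q(k,l)=3/4$, and the off-diagonal entries are controlled by
\[
A:=\tfrac14\sum_{k,l}(-1)^k Q(k,l),\qquad B:=\tfrac14\sum_{k,l}(-1)^l Q(k,l),\qquad C:=\tfrac14\sum_{k,l}(-1)^{k+l}Q(k,l),
\]
which evaluate to $A=B=1/4$, $C=-1/4$. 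The key step is then to verify that the restriction of $Q$ to the $(-2)$-eigenspace of $H_0(\bthe)$ is positive definite uniformly in $\bthe\in S$. On each branch of $S$ away from the origin this restriction is a $2\times 2$ matrix $\bar Q\cdot I + s\bigl(\begin{smallmatrix}0&1\\1&0\end{smallmatrix}\bigr)$ with $s\in\{A,B,C\}$, so its eigenvalues are $\bar Q\pm s\in\{1/2,1\}$; at the origin one obtains the $3\times 3$ matrix
\[
\bar Q\cdot I \;+\; \begin{pmatrix} 0 & C & A \\ C & 0 & B \\ A & B & 0 \end{pmatrix},
\]
whose eigenvalues, after a direct calculation, are $\{1,1,1/4\}$.

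Granted this uniform positivity, degenerate first-order perturbation theory produces a constant $c_0>0$ such that every low-lying eigenvalue of $H_0(\bthe)+2I+\lambda Q$ is at least $c_0\lambda/2$ for $\bthe$ near $S$ and small $\lambda>0$; away from $S$ the matrix $H_0(\bthe)+2I$ is uniformly invertible, so a standard compactness/continuity argument on $[0,\pi)^2$ yields $\lambda_0>0$ with $-2\notin\sigma(H_{\lambda Q})$ for all $\lambda\in(0,\lambda_0]$. The main obstacle is precisely the global positive-definiteness: the restriction of $Q$ must be sign-definite \emph{simultaneously} on all three branches of $S$ and at the singular point, even though each branch singles out a different character of the symmetry group $(\Z/2\Z)^2$ (so the restricted matrices involve different off-diagonal Walsh coefficients). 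This forces $\bar Q,A,B,C$ to satisfy compatible inequalities -- in particular $\bar Q$ must strictly dominate $|A|,|B|,|C|$ -- which rules out mean-zero perturbations (whose restricted matrices are traceless and hence indefinite) and motivates the asymmetric choice above; once this verification is in hand, Theorem~\ref{t:bsc:tri} completes the argument.
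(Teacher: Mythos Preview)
Your argument has a genuine gap: the claim that $-2\notin\sigma(H_{\lambda Q})$ for small $\lambda>0$ is false for your potential. The error is in the step ``for $\bthe$ near $S$.'' Positive-definiteness of $PQP$ at each $\bthe\in S$ tells you only that the eigenvalues of $H_{\lambda Q}(\bthe)$ which \emph{equal} $-2$ at $\lambda=0$ move upward. But $-2$ is an interior point of the free spectrum $[-3,6]$, not its bottom, so for $\bthe$ slightly off $S$ the relevant unperturbed eigenvalues straddle $-2$. Concretely, along the branch $\{\theta_1=0\}$ near $\bthe_0=(0,\theta_2^0)$, write $\bthe=(t,\theta_2^0)$; the two near-$(-2)$ eigenvalues of $H_0(\bthe)$ are $-2\pm at+O(t^2)$ with $a>0$, so the effective $2\times 2$ block of $H_{\lambda Q}(\bthe)+2I$ in the plane-wave basis is
\[
\begin{pmatrix} -at+\lambda\bar Q & \lambda B \\ \lambda B & at+\lambda\bar Q \end{pmatrix},
\]
whose smaller eigenvalue vanishes precisely when $a^2t^2=\lambda^2(\bar Q^2-B^2)=\lambda^2/2$. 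Thus $-2\in\sigma(H_{\lambda Q}(\bthe))$ at $t=\lambda/(a\sqrt 2)$ (up to higher order), and your compactness argument cannot rule this out because the offending $\bthe$ tends to $S$ as $\lambda\downarrow 0$. One can confirm this directly: your $Q$ equals $\tfrac12(Q_\star+1)$ where $Q_\star(n,m)=(-1)^{nm}$ is the paper's potential, so $\sigma(H_{\lambda Q})=\sigma(H_{(\lambda/2)Q_\star})+\lambda/2$; by the paper's exact computation the gap of $H_{(\lambda/2)Q_\star}$ is $(-\sqrt{4+\lambda^2/4},\,-2+\lambda/2)$, hence the gap of $H_{\lambda Q}$ is $(-\sqrt{4+\lambda^2/4}+\lambda/2,\,-2+\lambda)$, whose left endpoint exceeds $-2$ for every $\lambda>0$. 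So $-2$ lies in the lower band, not in the gap.

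The paper proceeds differently: it takes $Q_\star(n,m)=(-1)^{nm}$ and computes $p(\bthe,\lambda,\varepsilon)=\det(H_\lambda(\bthe)-(-2+\varepsilon)\bbI)$ explicitly, showing $p(\bthe,\lambda,0)\le -\lambda^4-4\lambda^3<0$ for all $\bthe$ and $\lambda>0$, which gives $-2\notin\sigma(H_\lambda)$ at once. Note that for $Q_\star$ one has $\bar Q=|A|=|B|=|C|=\tfrac12$, so $PQ_\star P$ has a \emph{zero} eigenvalue on each branch of $S$ and is indefinite at the origin (eigenvalues $\{1,1,-\tfrac12\}$); your positive-definiteness criterion would not even apply. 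The determinant calculation is what makes the inclusion $-2\in\mathfrak g_\lambda$ visible --- it is a borderline second-order effect on one side of the gap, which first-order degenerate perturbation theory cannot detect.
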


\subsection{The Hexagonal Lattice} The set of vertices of the hexagonal lattice is closely related to that of the triangular lattice. Concretely, define $\bm{b}_\pm$ by
\[
\bm{b}_\pm
=
\frac{1}{2}
\begin{bmatrix}
 3 \\ \pm \sqrt{3}
\end{bmatrix}.
\]
Then, we obtain the hexagonal lattice by deleting the centers of some of the hexagons formed by the triangular lattice; more precisely,
\[
\CV_\hex
=
\set{n \bm{a}_1 + m \bm{a}_2 \in \CV_\tri:n,m \in \Z} \setminus \{- \bm{a}_1 + k \bm{b}_+ + \ell \bm{b}_- : k,\ell \in \Z\}.
\]
Equivalently, it is not hard to check that $\{0, \bm{a}_1\}$ is a fundamental set of vertices and hence every $v \in \CV_\hex$ may be written uniquely as either $n \bm{b}_+ + m \bm{b}_-$ or $\bm{a}_1 + n \bm{b}_+ + m \bm{b}_-$ for integers $n,m$, so we have
\begin{align*}
\CV_\hex
& =
\set{n \bm{b}_+ + m \bm{b}_- : n,m \in \Z} \cup \set{\bm{a}_1 + n \bm{b}_+ + m \bm{b}_- : n,m \in \Z}.
\end{align*}
We define $\CE_\hex$ by declaring $u \sim v$ for $u, v \in \CV_\hex$ if $\|u - v\|_2 = 1$. After some calculations, we see that
\begin{align*}
[\Delta_\hex \psi]_{n \bm{b}_+ + m \bm{b}_-}
& =
\psi_{\bm{a}_1 + n \bm{b}_+ + m \bm{b}_-} + \psi_{\bm{a}_1 + n \bm{b}_+ + (m-1) \bm{b}_-} + \psi_{\bm{a}_1 + (n-1) \bm{b}_+ + m \bm{b}_-} \\
[\Delta_\hex \psi]_{\bm{a}_1 + n \bm{b}_+ + m \bm{b}_-}
& =
\psi_{n \bm{b}_+ + m \bm{b}_-} + \psi_{n \bm{b}_+ + (m+1) \bm{b}_-} + \psi_{(n+1) \bm{b}_+ + m \bm{b}_-}
\end{align*}

\begin{figure*}[t]
\begin{tikzpicture}[yscale=.75,xscale=.75]
\draw [-,line width = .06cm] (0,0) -- (1,{sqrt(3)});
\draw [-,line width = .06cm] (0,{2*sqrt(3)}) -- (1,{sqrt(3)});
\draw [-,line width = .06cm] (0,{2*sqrt(3)}) -- (1,{3*sqrt(3)});
\draw [-,line width = .06cm] (0,{4*sqrt(3)}) -- (1,{3*sqrt(3)});
\draw [-,line width = .06cm,color=red] (1,{sqrt(3)}) -- (3,{sqrt(3)});
\draw [-,line width = .06cm] (1,{3*sqrt(3)}) -- (3,{3*sqrt(3)});
\draw [-,line width = .06cm] (4,{2*sqrt(3)}) -- (6,{2*sqrt(3)});
\draw [-,line width = .06cm] (4,0) -- (3,{sqrt(3)});
\draw [-,line width = .06cm] (4,{2*sqrt(3)}) -- (3,{sqrt(3)});
\draw [-,line width = .06cm] (4,{2*sqrt(3)}) -- (3,{3*sqrt(3)});
\draw [-,line width = .06cm] (4,{4*sqrt(3)}) -- (3,{3*sqrt(3)});
\draw [-,line width = .06cm] (4,0) -- (6,0);
\draw [-,line width = .06cm] (4,{2*sqrt(3)}) -- (6,{2*sqrt(3)});
\draw [-,line width = .06cm] (4,{4*sqrt(3)}) -- (6,{4*sqrt(3)});
\draw [-,line width = .06cm] (6,0) -- (7,{sqrt(3)});
\draw [-,line width = .06cm] (6,{2*sqrt(3)}) -- (7,{sqrt(3)});
\draw [-,line width = .06cm] (6,{2*sqrt(3)}) -- (7,{3*sqrt(3)});
\draw [-,line width = .06cm] (6,{4*sqrt(3)}) -- (7,{3*sqrt(3)});
\filldraw[color=black, fill=black](0,0) circle (.2);
\filldraw[color=black, fill=black](4,0) circle (.2);
\filldraw[color=black, fill=black](6,0) circle (.2);
\filldraw[color=red, fill=red](1,{sqrt(3)}) circle (.2);
\filldraw[color=red, fill=red](3,{sqrt(3)}) circle (.2);
\filldraw[color=black, fill=black](7,{sqrt(3)}) circle (.2);
\filldraw[color=black, fill=black](0,{2*sqrt(3)}) circle (.2);
\filldraw[color=black, fill=black](4,{2*sqrt(3)}) circle (.2);
\filldraw[color=black, fill=black](6,{2*sqrt(3)}) circle (.2);
\filldraw[color=black, fill=black](1,{3*sqrt(3)}) circle (.2);
\filldraw[color=black, fill=black](3,{3*sqrt(3)}) circle (.2);
\filldraw[color=black, fill=black](7,{3*sqrt(3)}) circle (.2);
\filldraw[color=black, fill=black](0,{4*sqrt(3)}) circle (.2);
\filldraw[color=black, fill=black](4,{4*sqrt(3)}) circle (.2);
\filldraw[color=black, fill=black](6,{4*sqrt(3)}) circle (.2);
\draw [->,line width = .06cm,color=blue] (1.2,{sqrt(3)-.1}) -- (3.8,{.1});
\draw [->,line width = .06cm,color=blue] (1.2,{sqrt(3)+.1}) -- (3.8,{2*sqrt(3)-.1});
\node at (2,{1.8*sqrt(3)}) {\cold{$\bm{b}_+$}};
\node at (2,{.2*sqrt(3)}) {\cold{$\bm{b}_-$}};
\end{tikzpicture}
\caption{A portion of the hexagonal lattice. A fundamental domain is highlighted in red.}\label{fig:hexlat}
\end{figure*}
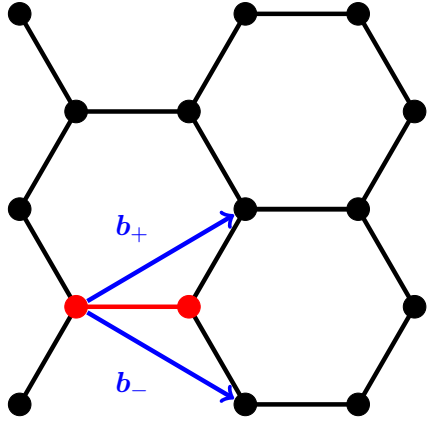
See Figure~\ref{fig:hexlat}. The formula for $\Delta_\hex$ can be made more compact if we view the associated Hilbert space as 
\[
\ell^2(\Z^2,\C^2)
=
\set{\Psi:\Z^2 \to \C^2 : \sum_{n,m} \|\Psi_{n,m}\|^2 < \infty},
\]
where the standard basis of $\C^2$ corresponds to the left and right vertices of the fundamental domain, respectively. More precisely, given $\psi \in \ell^2(\CV_\hex)$, define $\Psi \in \ell^2(\Z^2,\C^2)$ by 
\[
\Psi_{n,m}
=
\begin{bmatrix}
\psi_{n \bm{b}_+ + m \bm{b}_-} \\ \psi_{\bm{a}_1 + n \bm{b}_+ + m \bm{b}_-}
\end{bmatrix}.
\]
Identifying $\ell^2(\CV_\hex)$  and $\ell^2(\Z^2,\C^2)$ in this fashion, the Laplacian for the hexagonal lattice is given by
\begin{align*}
[\Delta_\hex \Psi]_{n,m}
& =
U(\Psi_{n,m-1}+ \Psi_{n-1,m}) + L(\Psi_{n,m+1}+  \Psi_{n+1,m}) + J \Psi_{n,m},
\end{align*}
where
\[
U
=
\begin{bmatrix}
0 & 1 \\ 0 & 0
\end{bmatrix},
\quad
L 
=
U^\top
=
\begin{bmatrix}
0 & 0 \\ 1 & 0
\end{bmatrix},
\quad
J
=
U+L
=
\begin{bmatrix}
0 & 1 \\ 1 & 0
\end{bmatrix}.
\]
{Equivalently, if we denote by $S_1, S_2 : \ell^2(\Z^2) \to \ell^2(\Z^2)$ the shift operators
\[
[S_1 \psi]_{n,m}
=
\psi_{n+1,m},
\quad
[S_2\psi]_{n,m}
=
\psi_{n,m+1},
\]
we have
\[
\Delta_\hex \Psi
=
\begin{bmatrix}
(S_1^* + S_2^* + \bbI)\psi^- \\  (S_1 + S_2 + \bbI)\psi^+
\end{bmatrix}
\quad
\text{for any} \quad 
\Psi 
=
\begin{bmatrix} \psi^+ \\ \psi^- \end{bmatrix}
\in \ell^2(\Z^2,\C^2).
\]
Abbreviating somewhat, we write:
\begin{equation} \label{eq:hexDecomp}
\Delta_\hex
=\begin{bmatrix}
0 & S_1^* + S_2^* + \bbI \\
S_1 + S_2 + \bbI & 0
\end{bmatrix}.
\end{equation}}

\begin{theorem}[Bethe--Sommerfeld for the hexagonal lattice] \label{t:bsc:hex}
For all $\bm{p} = (p_1,p_2 )\in \Z_+^2$, there is a constant $c = c_{\bm{p}} > 0$ such that, if $Q:\CV_\hex \to \R$ is $\bm{p}$-periodic and $\|Q\|_\infty \leq c$, the following statements hold true for $H_Q = \Delta_\hex + Q$:
\begin{enumerate}
\item[{\rm(1)}]  $\sigma(H_Q)$ consists of no more than four intervals.
\item[{\rm(2)}] If at least one of $p_1$ or $p_2$ is odd, then $\sigma(H_Q)$ consists of no more than two intervals.
\end{enumerate}
Moreover, gaps may only open at $0$ and $\pm1$ in the first case, and only at zero in the second case.
\end{theorem}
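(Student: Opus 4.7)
The plan is to deduce Theorem~\ref{t:bsc:hex} from the triangular case (Theorem~\ref{t:bsc:tri}) by exploiting the bipartite block structure \eqref{eq:hexDecomp}. Setting $T := S_1 + S_2 + \bbI$, a short computation yields $T^*T = TT^* = 3\bbI + \Delta_\tri$, so that on $\ell^2(\Z^2) \oplus \ell^2(\Z^2)$ one has
\[
\Delta_\hex^2 = (3\bbI+\Delta_\tri) \oplus (3\bbI+\Delta_\tri).
\]
Hence $\sigma(\Delta_\hex) = [-3, 3]$, with the two halves meeting only at $E = 0$, while the hexagonal energies $E = \pm 1$ are exactly the preimages under $E \mapsto E^2 - 3$ of the triangular exceptional energy $-2$.

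Writing $Q = \mathrm{diag}(Q^+, Q^-)$ in the sublattice block form of \eqref{eq:hexDecomp} and expanding,
\[
H_Q^2 = (3\bbI+\Delta_\tri) \oplus (3\bbI+\Delta_\tri) + R, \qquad \|R\| = O(\|Q\|_\infty),
\]
the first step is to prove that $\sigma(H_Q^2)$ has at most two connected components, with a gap possibly only at $\lambda = 1$, and with no gap at all whenever some $p_j$ is odd. This is the analogue of Theorem~\ref{t:bsc:tri} for the ``doubled'' triangular operator with a hopping-type (rather than multiplicative) perturbation $R$. I would prove it by running the perturb-and-count method at the $2p_1p_2$-dimensional Floquet fibre of $H_Q$: the unperturbed fibre $H_0(\bthe)^2$ has twice-degenerate eigenvalues $3 + \Delta_\tri(\bthe)$, and the Floquet-parameter and perturbation-direction choices from the triangular analysis continue to yield the required signed counts of eigenvalues crossing a fixed non-exceptional energy once the multiplication perturbation is replaced by $R(\bthe)$.

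Step~2 is then the soft deduction from the inclusion $\sigma(H_Q) \subset \{E \in \R : E^2 \in \sigma(H_Q^2)\}$, which follows from the spectral-theorem identity $\sigma(H_Q^2) = \sigma(H_Q)^2$. A one-interval $\sigma(H_Q^2) = [a, b]$ forces $\sigma(H_Q) \subset [-\sqrt{b}, -\sqrt{a}] \cup [\sqrt{a}, \sqrt{b}]$, which has at most two components (one if $a = 0$), giving the odd-period statement with any gap located at $E = 0$. A two-interval $\sigma(H_Q^2)$ gapped near $\lambda = 1$ yields at most four components in $\sigma(H_Q)$, with possible gaps precisely at $\{0, \pm 1\}$. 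The potential gap at $E = 0$ arises from the branch point of $\pm\sqrt{\cdot}$ and reflects the chiral symmetry $\Sigma \Delta_\hex \Sigma = -\Delta_\hex$ (with $\Sigma = \mathrm{diag}(I,-I)$) of the unperturbed operator. The principal obstacle is Step~1: because $R$ is a hopping rather than a multiplication perturbation of the decoupled triangular pair, Theorem~\ref{t:bsc:tri} does not apply as a black box, and one must re-verify the band-overlap stability, the signed eigenvalue counting, and the arithmetic improvement for odd periods in the presence of this more general perturbation.
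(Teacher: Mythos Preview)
Your Step~2 contains a genuine gap. The inclusion $\sigma(H_Q) \subset \{E : E^2 \in \sigma(H_Q^2)\}$ is correct, but being contained in a set with two (or four) components does \emph{not} bound the number of components of $\sigma(H_Q)$. For instance, if $\sigma(H_Q)$ happened to be $[-3,-2.1]\cup[-1.9,-1.1]\cup[-0.9,3]$, then $\sigma(H_Q)^2=[0,9]$ is a single interval, yet $\sigma(H_Q)$ has three components. The squaring map collapses gaps on the negative axis whenever the positive axis covers the corresponding range, so connectivity of $\sigma(H_Q^2)$ carries essentially no information about gaps in $\sigma(H_Q)$ away from zero. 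Your Step~1 is therefore aimed at the wrong target: even if you proved it, the conclusion would not follow.

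The paper sidesteps this by never squaring the \emph{perturbed} operator. Instead, it applies the identity $\Delta_\hex^2=(3\bbI+\Delta_\tri)\oplus(3\bbI+\Delta_\tri)$ at the level of Floquet matrices of the \emph{free} Laplacian, which gives a band-by-band correspondence
\[
F_{k,\hex}^\Lambda = \pm\sqrt{F_{\ell,\tri}^\Lambda+3}
\]
for appropriate indices (equation~\eqref{eq:hexTriBandRel}). Thus $E$ lies in the interior of some hexagonal band if and only if $E^2-3$ lies in the interior of some triangular band, and Theorem~\ref{thm:trimain} (the band-interior statement for the triangular lattice) immediately yields the corresponding statement for $\Delta_\hex$. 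The passage to small $Q$ is then the standard Lipschitz continuity of band edges under bounded perturbations. In particular, no version of your Step~1 is needed: the hopping perturbation $R$ never appears, because the squaring is done before $Q$ is added, not after.
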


Moreover, this theorem is sharp in the following sense: there exists a $(1,1)$-periodic potential $Q_1$ which infinitesimally opens a gap at zero, and there is a $(2,2)$-periodic potential $Q_2$ which infinitesimally opens gaps at $-1$, $0$, and $1$ in the following sense:

\begin{theorem} \label{t:hexExamples}
\begin{enumerate}
\item[{\rm(1)}] There exists $Q_1: \CV_\hex \to \R^2$ which is $(1,1)$-periodic such that $\sigma(H_{\lambda Q_1})$ has exactly two connected components for all $\lambda>0$.
\item[{\rm(2)}] There exists $Q_2: \CV_\hex \to \R^2$  which is $(2,2)$ periodic such that $\sigma(H_{\lambda Q_2})$ has exactly four connected components for any sufficiently small $\lambda > 0$.
\end{enumerate}
\end{theorem}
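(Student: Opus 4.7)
The proof naturally splits along the two parts of the statement.

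For part (1), I would take the sublattice-asymmetric potential: let $Q_1$ take the value $+1$ on $\{n\bm{b}_+ + m\bm{b}_-\}$ and $-1$ on $\{\bm{a}_1 + n\bm{b}_+ + m\bm{b}_-\}$, which is manifestly $(1,1)$-periodic. Inserting this into the block representation \eqref{eq:hexDecomp} and Floquet transforming on the $(1,1)$ fundamental cell reduces $H_{\lambda Q_1}$ to the direct integral over $\bm{\theta} \in \T^2$ of
\[
M_\lambda(\bm{\theta}) = \begin{bmatrix} \lambda & \overline{f(\bm{\theta})} \\ f(\bm{\theta}) & -\lambda \end{bmatrix}, \qquad f(\bm{\theta}) = 1 + e^{i\theta_1} + e^{i\theta_2}.
\]
The eigenvalues of $M_\lambda(\bm{\theta})$ are $\pm\sqrt{\lambda^2 + |f(\bm{\theta})|^2}$, and since $|f|$ sweeps the interval $[0,3]$, one reads off $\sigma(H_{\lambda Q_1}) = [-\sqrt{\lambda^2+9},\, -\lambda] \cup [\lambda,\, \sqrt{\lambda^2+9}]$, which has exactly two connected components for every $\lambda > 0$.

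For part (2) my plan is to design $Q_2$ so as to simultaneously break every band touching of the $(2,2)$-folded free spectrum at the three exceptional energies of Theorem~\ref{t:bsc:hex}. Floquet transforming on the $(2,2)$-cell produces an $8 \times 8$ matrix $\widehat{H_0}(\bm{\theta})$ on $\bm{\theta} \in [0,\pi)^2$ with eigenvalues $\pm |f(\bm{\theta} + \bm{k})|$ for $\bm{k} \in \{0,\pi\}^2$. A direct evaluation gives $|f(\bm{k})| = 1$ for each of the three non-zero $\bm{k}$ at $\bm{\theta} = 0$, so the free spectrum has triple degeneracies at $E = \pm 1$, while the touchings at $E = 0$ occur at the Dirac points of the full Brillouin zone where $f$ vanishes. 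My candidate is $Q_2 = Q_1 + W$, where $Q_1$ is the part-(1) potential (which, by the same Floquet computation as above, already supplies the $E = 0$ gap even when viewed as $(2,2)$-periodic) and $W$ is a genuine $(2,2)$-periodic perturbation whose Fourier coefficients at each of the half-period momenta $(\pi,0),(0,\pi),(\pi,\pi)$ are non-zero on at least one sublattice. Standard degenerate first-order perturbation theory then produces a $3 \times 3$ effective matrix on each of the triply degenerate free eigenspaces at $E = \pm 1$, and for a generic choice of $W$ these effective matrices have distinct eigenvalues, producing local splittings of order $\lambda$.

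The main obstacle is the \emph{global} step: showing that the resulting local band-splittings at $\pm 1$ are not immediately reclosed by the dispersion of the touching bands elsewhere in the folded Brillouin zone. I would handle this by expanding each affected band to second order about the critical $\bm{\theta}$, identifying the Hessians as explicit quadratic forms in the Floquet parameters, and comparing the resulting bandwidth contributions with the first-order splittings of size $\sim \lambda$. Provided $W$ can be chosen so that each first-order splitting dominates the local dispersion of every band moving across $\pm 1$, Theorem~\ref{t:bsc:hex} then forces $\sigma(H_{\lambda Q_2})$ to consist of exactly four intervals, with gaps straddling $-1$, $0$, and $+1$, for all sufficiently small $\lambda > 0$. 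The delicate point is exhibiting a single eight-parameter $W$ that achieves all three gap openings at once without inadvertently closing one of them; this is where an explicit finite-dimensional computation on the $(2,2)$-cell becomes unavoidable, presumably in parallel with the quantitative estimates referenced in Theorem~\ref{thm:hexQ}.
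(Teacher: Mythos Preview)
Your treatment of part~(1) is correct and essentially matches the paper's: you compute the Floquet eigenvalues $\pm\sqrt{\lambda^2+|f(\bthe)|^2}$ directly, while the paper uses the equivalent anticommutation relation $Q_1\Delta_\hex + \Delta_\hex Q_1 = 0$ to get $(\Delta_\hex+\lambda Q_1)^2 = \Delta_\hex^2 + \lambda^2 \geq \lambda^2$.

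For part~(2), however, there is a genuine obstruction to your strategy. Your plan hinges on first-order degenerate perturbation theory producing splittings of order $\lambda$ at $E=\pm 1$, and then verifying that ``each first-order splitting dominates the local dispersion of every band moving across $\pm 1$''. This proviso is \emph{never} satisfiable: the paper proves (Theorem~\ref{thm:E=pm1linear}) that for \emph{every} $(2,2)$-periodic potential $Q$ and every $c>0$,
\[
\big((-1-c\lambda,-1+c\lambda)\cup(1-c\lambda,1+c\lambda)\big)\cap\sigma(H_{\lambda Q})\neq\emptyset
\]
for all small $\lambda$. The mechanism is that at $\bthe=\bm{0}$ the $\lambda^3$ coefficient of $\det(H_\lambda(\bm{0})-(1+s\lambda)\bbI)$ is a nontrivial cubic in $s$, forcing a sign change and hence an eigenvalue inside the putative linear gap. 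So no matter how you tune $W$, the first-order splittings at $\pm 1$ are always recaptured by dispersion elsewhere in the Brillouin zone at the \emph{same} order in $\lambda$; the global step you flag as ``the main obstacle'' is in fact an impossibility.

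The paper's route is accordingly different in kind, not just in detail: it fixes the explicit potential $(q_1,\ldots,q_8)=(1,-1,1,2,-2,-1,1,-1)$ and expands $\det(H_\lambda(\bthe)-(\pm 1+s\lambda^2)\bbI)$ in powers of $\lambda$, showing that the leading nontrivial contribution (at order $\lambda^6$, after the $\lambda^0$ and $\lambda^4$ terms are controlled) has a definite sign uniformly in $\bthe$ for $|s|\leq 1/20$. The gaps at $\pm 1$ therefore open \emph{quadratically} in $\lambda$, not linearly, and capturing this requires the full characteristic-polynomial computation rather than a perturbative eigenvalue expansion.
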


Let us remark that Theorem~\ref{t:hexExamples}.(1) is well-known; we merely list it for completeness. The example in Theorem~\ref{t:hexExamples}.(2) is novel.

\subsection{The EHM Lattice}

The EHM lattice also has vertex set $\CV_\sqn = \CV_\sq = \Z^2$. However, now, one connects $\bm{n}$ and $\bm{n}'$ if and only if they are nearest neighbors or next-nearest-neighbors in the square lattice. Equivalently, one declares
\[
\bm{n} \sim \bm{n}'
\iff
\|\bm{n} - \bm{n}' \|_\infty
=
1.
\] 
The associated Laplacian acts on $\ell^2(\Z^2)$ via
\[
[\Delta_\sqn \psi]_{n,m}
=
[\Delta_\sq]_{n,m}+\psi_{n-1,m-1}+\psi_{n-1,m+1}+\psi_{n+1,m-1}+\psi_{n+1,m+1}.
\]
See Figure~\ref{fig:ehmlat}.
\begin{figure*}[t]

\begin{tikzpicture}[yscale=.6,xscale=.6]
\draw [-,line width = .06cm] (0,0) -- (6,0);
\draw [-,line width = .06cm] (0,0) -- (0,6);
\draw [-,line width=.06cm] (0,2) -- (6,2);
\draw [-,line width = .06cm] (2,0) -- (2,6);
\draw [-,line width=.06cm] (0,4) -- (6,4);
\draw [-,line width = .06cm] (4,0) -- (4,6);
\draw [-,line width=.06cm] (0,6) -- (6,6);
\draw [-,line width = .06cm] (6,0) -- (6,6);
\draw [-,line width = .06cm] (0,0) -- (6,6);
\draw [-,line width = .06cm] (2,0) -- (6,4);
\draw [-,line width = .06cm] (4,0) -- (6,2);
\draw [-,line width = .06cm] (0,2) -- (4,6);
\draw [-,line width = .06cm] (0,4) -- (2,6);
\draw [-,line width = .06cm] (6,0) -- (6,6);
\draw [-,line width = .06cm] (0,6) -- (6,0);
\draw [-,line width = .06cm] (2,0) -- (0,2);
\draw [-,line width = .06cm] (4,0) -- (0,4);
\draw [-,line width = .06cm] (2,6) -- (6,2);
\draw [-,line width = .06cm] (6,4) -- (4,6);
\filldraw[color=black, fill=black](0,0) circle (.2);
\filldraw[color=black, fill=black](2,0) circle (.2);
\filldraw[color=black, fill=black](4,0) circle (.2);
\filldraw[color=black, fill=black](6,0) circle (.2);
\filldraw[color=black, fill=black](0,2) circle (.2);
\filldraw[color=black, fill=black](2,2) circle (.2);
\filldraw[color=black, fill=black](4,2) circle (.2);
\filldraw[color=black, fill=black](6,2) circle (.2);
\filldraw[color=black, fill=black](0,4) circle (.2);
\filldraw[color=black, fill=black](2,4) circle (.2);
\filldraw[color=black, fill=black](4,4) circle (.2);
\filldraw[color=black, fill=black](6,4) circle (.2);
\filldraw[color=black, fill=black](0,6) circle (.2);
\filldraw[color=black, fill=black](2,6) circle (.2);
\filldraw[color=black, fill=black](4,6) circle (.2);
\filldraw[color=black, fill=black](6,6) circle (.2);
\end{tikzpicture}
\caption{A portion of the EHM lattice.}\label{fig:ehmlat}
\end{figure*}
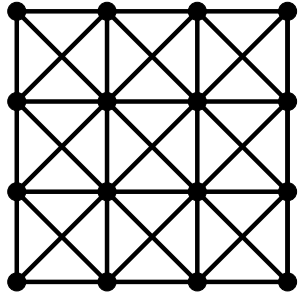

\begin{theorem}[Bethe--Sommerfeld for the EHM lattice] \label{t:bsc:nnn}
For all $\bm{p} = (p_1,p_2 )\in \Z_+^2$, there is a constant $c = c_{\bm{p}} > 0$ such that, if $Q:\CV_\sqn \to \R$ is $\bm{p}$-periodic and $\|Q\|_\infty \leq c$, the following hold true for $H_Q = \Delta_\sqn + Q$:
\begin{enumerate}
\item[{\rm(1)}] $\sigma(H_Q)$ consists of no more than two intervals.
\item[{\rm(2)}] If at least one of $p_1$ or $p_2$ is not divisible by three, then $\sigma(H_Q)$ consists of a single interval.
\end{enumerate}
Moreover, the gap in the first setting may only open at the energy $E = -1$.
\end{theorem}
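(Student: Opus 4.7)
My plan is to adapt the perturb-and-count framework from \cite{HanJit2017} to the EHM geometry via an explicit commutative factorization of the free Laplacian. A direct calculation yields
\[
\Delta_\sqn + \bbI = (S_1 + S_1^* + \bbI)(S_2 + S_2^* + \bbI),
\]
and the two factors commute. After Floquet transform with Floquet parameter $\bthe = (\theta_1, \theta_2) \in \T^2$, the $p_1 p_2$ eigenvalues of $\Delta_\sqn(\bthe)$ factor as
\[
E_{k, \ell}(\bthe) = g(\tthe_1^{(k)}) g(\tthe_2^{(\ell)}) - 1,
\quad
g(x) := 1 + 2\cos x,
\quad
\tthe_j^{(k)} := (\theta_j + 2\pi k)/p_j.
\]
A perturbative gap at energy $E$ opens iff the integer-valued function $N(\bthe, \lambda) := \#\{\text{eigenvalues of } H_{\lambda Q}(\bthe) \text{ that are } < E\}$ is constant in $\bthe \in \T^2$ for sufficiently small $\lambda > 0$. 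The proof rules this out outside the regime $3 \mid p_1, \ 3 \mid p_2, \ E = -1$.

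Since $\nabla(gg - 1) = (-2\sin\theta_1 \cdot g(\theta_2), -2\sin\theta_2 \cdot g(\theta_1))$ vanishes on a continuum of $\bthe$ only where $g(\theta_1) = 0$ or $g(\theta_2) = 0$, both of which force the symbol to equal $-1$, the only candidate exceptional energy is $E = -1$. For any $E \neq -1$ in the spectrum, I would select $\bthe^*$ on the smooth Fermi curve $\{gg - 1 = E\}$ at which exactly one pair $(k_0, \ell_0)$ produces a Floquet eigenvalue at $E$, with nonzero gradient; then moving $\bthe$ transversely across the curve at $\bthe^*$ shifts this single eigenvalue through $E$, so $N$ jumps by $\pm 1$, and first-order spectral perturbation theory preserves the transverse crossing under $\lambda Q$ for $\lambda$ small enough. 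This proves $N$ is non-constant for all $E \neq -1$, yielding part (1) and reducing part (2) to the case $E = -1$.

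For part (2), assume without loss of generality $3 \nmid p_2$. Since $p_2/3 \notin \Z$, each of $\theta_2^* \in \{2\pi p_2/3, 4\pi p_2/3\} \pmod{2\pi}$ produces exactly one $\ell_0$ with $g(\tthe_2^{(\ell_0)}) = 0$. Pick $\theta_1^*$ generic so that $g(\tthe_1^{(k)}) \neq 0$ for every $k$; this yields a $p_1$-dimensional degenerate eigenspace of $\Delta_\sqn(\bthe^*)$ at $E = -1$, from the pairs $(k, \ell_0)$. Shifting $\theta_2$ by $\pm\delta$, each of these degenerate eigenvalues splits linearly as $g'(\tthe_2^{(\ell_0)}) g(\tthe_1^{(k)}) \delta/p_2$, with the sign of motion controlled by $g(\tthe_1^{(k)})$. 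Setting $n_+ := \#\{k : g(\tthe_1^{(k)}) > 0\}$ and $n_- := p_1 - n_+$, the key counting observation is that the arc $\{x : g(x) > 0\} \subset \T$ has length $4\pi/3$, so the $\theta_1^*$-average of $n_+$ equals $2p_1/3$; since $2p_1/3 \neq p_1/2$ for every positive integer $p_1$, some $\theta_1^*$ (away from zeros) yields $n_+ \neq n_-$. Hence $N$ differs between $\bthe^* + (0, \delta)$ and $\bthe^* - (0, \delta)$ by $|n_+ - n_-| \neq 0$; taking $\delta$ large compared to $\lambda$ preserves the jump under perturbation, excluding a gap at $E = -1$.

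The principal obstacle is the excluded case $3 \mid p_1$ and $3 \mid p_2$ at $E = -1$, in which both roots $\pm 2\pi/3$ of $g$ appear in each $\{\tthe_j^{(k)}\}$; because $g'(2\pi/3) = -g'(4\pi/3)$, the first-order shifts from paired indices cancel exactly in every axis-aligned direction, and the linear counting collapses. This is consistent with the theorem, which makes no claim in this regime; in fact Theorem \ref{thm:nnnExGapLength} constructs a $(3, 3)$-periodic potential that genuinely opens a gap at $E = -1$. A secondary technical issue is ensuring that the $O(\lambda)$ off-diagonal Floquet coupling from $Q$ does not spoil the $O(\delta)$ gradient-driven counting on the degenerate subspace; this I handle by the standard order-of-limits trick, fixing $\lambda$ first and then taking $\delta$ proportionally larger while keeping both much smaller than the isolation distance from the nearest non-degenerate unperturbed eigenvalue.
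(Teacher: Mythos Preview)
Your factorization $\Delta_\sqn + \bbI = (S_1+S_1^*+\bbI)(S_2+S_2^*+\bbI)$ is a clean observation the paper does not make explicit; it renders the product structure $E_{k,\ell}(\bthe)=g(\tthe_1^{(k)})g(\tthe_2^{(\ell)})-1$ transparent and immediately singles out $E=-1$ as the unique energy whose Fermi set contains entire lines rather than isolated curves. For $E=-1$ your argument is essentially the paper's with the coordinates swapped: the paper (assuming $3\nmid p_1$) fixes $\tthe_1$ so that exactly one index hits a root of $g$, keeps $\tthe_2$ away from all roots, and perturbs in the $\theta_1$-direction; you do the mirror construction under $3\nmid p_2$ and perturb in $\theta_2$. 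Where the paper computes $|\CJ_{\bbe}^\pm|$ explicitly (getting roughly $p_2/3$ versus $2p_2/3$), your averaging observation that the arc $\{g>0\}$ has length $4\pi/3$ (so $n_+$ cannot be identically $p_1/2$) is softer but equally valid.

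For $E\neq -1$, however, there is a genuine gap. You assert without justification that one can choose $\bthe^*$ at which \emph{exactly one} pair $(k_0,\ell_0)$ lands on the Fermi curve. This multiplicity-one claim is not automatic: you would need to show that no shifted Fermi curve $\{E_{k,\ell}=E\}$ is contained in the union of the others, which amounts to a nontrivial statement about the level sets of $g(x)g(y)$ under the lattice of shifts $(2\pi k/p_1,2\pi\ell/p_2)$. The paper avoids this issue entirely by allowing \emph{arbitrary} multiplicity $r$ at the chosen $\tbthe$ and running the two-direction perturb-and-count from \cite{HanJit2017}: a generic direction $\bbe_2$ along which all $r$ gradients are nonzero forces $r=2s$, while the specific direction $\bbe_1=(1,0)$ is engineered so that at least one gradient vanishes and, crucially, every second-order term in that kernel has the same sign (determined by $\mathrm{sgn}(E+1)$ via Lemma~\ref{lem:constructionsqn}), forcing $|\CJ_{\bbe_1}^0|=0$ and hence a contradiction. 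This second-order control at degenerate points is exactly the mechanism that makes the argument robust to multiplicity, and it is what your sketch for $E\neq -1$ is missing.
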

This theorem is also sharp:
\begin{theorem} \label{t:nnnExamples}
There exists $Q:\Z^2 \to \R$ which is $(3,3)$-periodic such that $\sigma(H_{\lambda Q})$ has exactly two connected components for any sufficiently small $\lambda>0$.
\end{theorem}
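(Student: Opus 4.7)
The plan is to exhibit an explicit $(3,3)$-periodic potential $Q:\Z^2\to\R$ and to show, via a $9\times 9$ Floquet matrix analysis at the exceptional energy $E=-1$ identified in Theorem~\ref{t:bsc:nnn}, that a spectral gap opens for $H_{\lambda Q} = \Delta_\sqn + \lambda Q$ at all sufficiently small $\lambda > 0$. The key algebraic input is the factorization
\[
h_\sqn(\theta_1,\theta_2) + 1 \;=\; (1+2\cos\theta_1)(1+2\cos\theta_2)
\]
of the symbol of $\Delta_\sqn$. After Floquet-transforming with period $(3,3)$ and working in the plane-wave basis on $\Z_3^2$, the Floquet matrix of $\Delta_\sqn$ at parameter $\theta$ in the small Brillouin zone is diagonal with entries $h_\sqn(\theta_1 + 2\pi k/3,\theta_2 + 2\pi\ell/3)$ for $(k,\ell)\in\Z_3^2$. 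Reading off the factorization, the unperturbed eigenvalue $-1$ appears with multiplicity eight at the origin $\theta=0$ (the lone remaining band sits at $h_\sqn(0,0)=8$), with multiplicity six along each of the two axes $\{\theta_1=0\}$ and $\{\theta_2=0\}$ away from the origin, and with multiplicity at most one at every other Floquet parameter.

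First-order degenerate perturbation theory then says that the eigenvalues splitting off $-1$ are, to leading order in $\lambda$, the eigenvalues of the compression of multiplication by $Q$ to the $-1$-eigenspace of $\Delta_\sqn(\theta)$. The integrated density of states of $\Delta_\sqn$ at $-1$ equals
\[
N_{\Delta_\sqn}(-1) \;=\; \frac{\mathrm{meas}\{(\theta_1,\theta_2)\in[0,2\pi)^2 :(1+2\cos\theta_1)(1+2\cos\theta_2)\le 0\}}{(2\pi)^2} \;=\; \frac{4}{9},
\]
so a genuine gap at $-1$ forces exactly four of the nine Floquet bands to lie below $-1$ and five above. Combined with the multiplicities above, this pins down the required first-order splittings: the eight eigenvalues emerging at $\theta=0$ must split as $4+4$ about $-1$, and the six eigenvalues emerging along each axis must split as $3+3$, in every case with a gap in the perturbation spectrum that is quantitatively bounded away from zero.

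The construction therefore reduces to finding a mean-zero $Q:\Z_3^2\to\R$ whose compressions satisfy, simultaneously, that $Q|_{\mathbf{1}^\perp}$ has a $4+4$ split with gap (where $\mathbf{1}$ is the constant function on $\Z_3^2$) and that $Q|_{W_1}$ and $Q|_{W_2}$ each give a $3+3$ split with gap, where $W_1$ (resp.\ $W_2$) is the six-dimensional subspace of functions on $\Z_3^2$ whose column sums (resp.\ row sums) all vanish. Each of these three compression problems decouples into elementary sub-problems that can be handled by the secular equation $\sum_i 1/(q_i - \mu) = 0$ (the characteristic equation for compressing a diagonal operator to the orthogonal complement of the constant vector), so verifying the three splittings amounts to a finite computation. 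Two-valued candidates of the form $Q = +a$ on a four-element subset $S\subset\Z_3^2$ and $Q = -4a/5$ elsewhere do the job for an appropriate choice of $S$. The combinatorial task is to pick $S$ so that none of the three secular equations has $\mu=0$ as a root and so that the number of positive roots is as required; the obvious symmetric choices---for example, placing $Q$ on a diagonal subset, or taking the separable form $Q(n,m) = f(n)+g(m)$---produce either the wrong split at the origin or a spurious zero eigenvalue along an axis. This finite-volume manifestation of the phenomenon flagged in Remark~\ref{rem:sqn} is the central obstacle of the proof and forces us to break the obvious symmetries in choosing $S$.

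Once such a $Q$ is fixed, the conclusion follows by the perturb-and-count scheme of \cite{HanJit2017}: the uniform quantitative splittings at the critical Floquet parameters, together with continuity of the Floquet eigenvalues in $\theta$ and the observation that the unperturbed spectrum stays uniformly away from $-1$ at every non-critical Floquet parameter, force the four lowest bands of $H_{\lambda Q}$ to lie below $-1 - c\lambda$ and the five highest to lie above $-1 + c\lambda$ throughout the Brillouin zone, for all sufficiently small $\lambda > 0$. Hence $\sigma(H_{\lambda Q})$ has exactly two connected components separated by a gap of size $\asymp\lambda$, which also yields the quantitative gap-length estimate announced alongside Theorem~\ref{thm:nnnExGapLength}.
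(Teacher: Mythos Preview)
Your approach via degenerate perturbation theory is quite different from the paper's direct determinant estimate, and the structural observations --- the factorization $h_\sqn(\theta_1,\theta_2)+1=(1+2\cos\theta_1)(1+2\cos\theta_2)$, the multiplicity pattern $8/6/0$ on the origin/axes/elsewhere, the IDS value $4/9$, and the identification of the relevant compressions $Q|_{\mathbf{1}^\perp}$, $Q|_{W_1}$, $Q|_{W_2}$ --- are correct and illuminating. The proposal nonetheless has two substantial gaps.

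First, no potential is actually exhibited. You assert that a two-valued $Q$ with $|S|=4$ ``does the job for an appropriate choice of $S$,'' then immediately note that the natural symmetric choices fail; the argument ends with the search unfinished. Since Theorem~\ref{t:nnnExamples} is an existence statement, producing and checking a specific $Q$ \emph{is} the content of the proof. (The paper's example is not two-valued --- it involves $r=\sqrt{4-\sqrt{15}}$ --- which already hints that the finite search is delicate.)

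Second, and more seriously, the passage from ``correct first-order splittings on the critical set'' to ``global gap'' does not go through as written. The claim that ``the unperturbed spectrum stays uniformly away from $-1$ at every non-critical Floquet parameter'' is false: for $\theta_1\to 0$ with $\theta_2\neq 0$, six unperturbed eigenvalues approach $-1$ linearly in $\theta_1$. The dangerous regime is $|\theta_1|\asymp\lambda$, where neither the free splitting nor the perturbation dominates. After a Schur reduction to the six-dimensional block $W_1$, the question becomes whether $\tau D(\theta_2)+Q|_{W_1}-sI$ is invertible for \emph{every} $\tau\ge 0$, every $\theta_2$, and every small $|s|$, where $D(\theta_2)$ is the diagonal of linearized free eigenvalues. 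That both $D$ and $Q|_{W_1}$ have a $3+3$ split does not prevent an eigenvalue of $\tau D+Q|_{W_1}$ from crossing zero at some intermediate $\tau$; ruling this out is a one-parameter family of $6\times 6$ problems for each $\theta_2$, and ``continuity'' alone cannot settle it. The appeal to the perturb-and-count scheme of \cite{HanJit2017} is also misplaced --- that method establishes band \emph{overlap}, not band \emph{separation}. The paper avoids all of this by showing directly that $\det\big(H_\lambda(\bthe)+(1+s\lambda)\bbI\big)>0$ uniformly in $\bthe\in\T^2$ for $|s|<1/10$: computational, but regime-independent.
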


\bigskip

The remainder of the paper is organized as follows. Section~\ref{sec:floquet} recalls Floquet theory for $\Z^2$-periodic graphs. We work with the triangular lattice in Section~\ref{sec:tri}, proving Theorems~\ref{t:bsc:tri} and \ref{t:triExamples}. We then work with the hexagonal lattice in Section~\ref{sec:hex}, proving Theorems~\ref{t:bsc:hex} and \ref{t:hexExamples}. Finally, we conclude with the EHM lattice in Section~\ref{sec:nnn}, proving Theorems~\ref{t:bsc:nnn} and \ref{t:nnnExamples}.

\section{Floquet Theory for Periodic Schr\"odinger Operators on Periodic Graphs}
\label{sec:floquet}

Let $\Gamma= (\CV,\CE)$ be a $\Z^2$-periodic graph with translation symmetries $\bm{a}_1, \bm{a}_2 \in \R^2$, and suppose $Q:\CV \to \R$ is $\bm{p} = (p_1,p_2)$-periodic, that is,
\[
Q(u + p_j \bm{a}_j)
=
Q(u),\quad
u \in \CV,\; j = 1,2.
\]
We will briefly describe Floquet theory for $H_Q = \Delta_\Gamma + Q$, following \cite{KorSab2014}. The main purpose of this section is to establish notation, so we do not give any proofs. One may write $H_Q$ as a constant-fiber direct integral over the fundamental domain. Concretely, let
\[
\CV_{\rm f}
=
\CV \cap \set{s \bm{a}_1 + t \bm{a}_2 : 0 \le s < p_1, \; 0 \le t < p_2}.
\]
By periodicity, $|\CV_{\rm f}| = P := p_0 p_1 p_2$, where
\[
p_0
=
|\CV \cap \set{s\bm{a}_1 + t \bm{a}_2 : 0 \le s,t < 1}|.
\]
Here, and throughout the paper, we use $|S|$ to denote the cardinality of the set $S$. For each edge $(u,v) \in \CE$ there exist unique vertices $u_{\rm f}, v_{\rm f} \in \CV_{\rm f}$ and unique integers $n,m,n',m' \in \Z$ with
\[
u = u_{\rm f} + np_1 \bm{a}_1 + mp_2 \bm{a}_2,
\quad
v = v_{\rm f} + n'p_1\bm{a}_1 + m'p_2 \bm{a}_2,
\]
We then define the \emph{index} of $(u,v)$ by $\tau(u,v) = (n'-n,m'-m)$. Finally, for $u,v \in \CV_{\rm f}$, we define $B(u,v)$ to be the set of all translates of $v$ that connect to $u$ via an edge of $\Gamma$:
\[
B(u,v)
=
\set{w \in \CV : w \sim u \text{ and } w = v + np_1 \bm{a}_1 + mp_2 \bm{a}_2 \text{ for some }n,m \in \Z}.
\]

Then, for each $\bthe =(\theta_1,\theta_2) \in \R^2$, the corresponding Floquet matrix is a self-adjoint operator on $\CH_{\rm f}:= \ell^2(\CV_{\rm f}) = \C^{\CV_{\rm f}}$ defined by
\begin{equation} \label{eq:floqMatDef}
\langle \delta_u, H_Q(\bthe) \delta_v \rangle
=
\sum_{w \in B(u,v)} \exp\Big(i \big\langle \tau(u,w), \bthe \big\rangle\Big).
\end{equation}
In the event that the sum in \eqref{eq:floqMatDef} is empty, $\langle \delta_u, H_Q(\bthe) \delta_v \rangle = 0$. Clearly, if $\theta_j' - \theta_j \in 2\pi\Z$ for $j=1,2$, then $H_Q(\bthe) = H_Q(\bthe')$, so $H_Q(\bthe)$ descends to a well-defined function of $\bthe \in \T^2 := \R^2 / (2\pi\Z)^2 \cong [0,2\pi)^2$. We will freely use $\bthe \in \R^2$ or $\bthe \in \T^2$ depending on which is more convenient in a given setting.

Informally, \eqref{eq:floqMatDef} represents the restriction of $H_Q$ to the discrete torus 
\[
(\Z\bm{a}_1 \oplus \Z\bm{a}_2) / (p_1 \Z \bm{a}_1 \oplus p_2 \Z \bm{a}_2)
\cong
\Z_{p_1} \oplus \Z_{p_2}.
\]
with the following boundary conditions: wrapping once around the torus in the positive $\bm{a}_1$ direction accrues a phase $e^{i\theta_1}$ and wrapping around once in the positive $\bm{a}_2$ direction accrues a phase $e^{i\theta_2}$. More precisely, we may view $H_Q(\bthe)$ in the following manner. The operator $H_Q$ acts on the space $\C^\CV$ of arbitrary (not necessarily square-summable) functions $\CV \to \C$. When $Q$ is $(p_1,p_2)$-periodic, then for each $\bthe \in \T^2$, $H_Q$ preserves the subspace
\[
\mathcal{H}(\bthe)
=
\set{\psi \in \C^\CV : \psi(u+p_j \bm{a}_j) = e^{i\theta_j} \psi(u)}.
\]
Then, $H_Q(\bthe)$ is equivalent to the restriction of $H_Q$ to $\CH(\bthe)$.

For each $\bthe$, order the eigenvalues of $H_Q(\bthe)$ as
\[
E_1(\bthe)
\leq
\cdots
\leq
E_P(\bthe)
\]
with each eigenvalue listed according to its multiplicity. Then, for $1 \le j \le P$, the $j$th spectral \emph{band} of $H_Q$ is defined by
\[
F_j
=
F_j(Q)
:=
\mathrm{ran}(E_j)
=
\set{E_j(\bthe) : \bthe \in \T^2}
=
\set{E_j(\bthe) : \bthe \in \R^2}.
\]

\begin{theorem} \label{t:floquet}
With notation as above,
\[
\sigma(H_Q)
=
\bigcup_{\bthe \in \T^2} H_Q(\bthe)
=
\bigcup_{j=1}^P F_j.
\]
\end{theorem}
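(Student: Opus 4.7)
The plan is to realize $H_Q$ as a constant-fiber direct integral over the Floquet parameter, then read off the spectrum from the fibers. Concretely, define the Floquet transform $\mathcal{U}:\ell^2(\CV)\to L^2(\T^2,\CH_{\rm f})$ by
\[
(\mathcal{U}\psi)(\bthe)_u
=
\sum_{n,m\in\Z} e^{-i(n\theta_1+m\theta_2)}\,\psi\bigl(u + np_1\bm{a}_1 + mp_2\bm{a}_2\bigr),
\qquad u\in\CV_{\rm f},
\]
where $L^2(\T^2,\CH_{\rm f})$ carries the normalized Haar measure on $\T^2$. The first step is to verify that $\mathcal{U}$ is unitary. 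Since every $v\in\CV$ has a unique decomposition $v = u + np_1\bm{a}_1 + mp_2\bm{a}_2$ with $u\in\CV_{\rm f}$, Plancherel on $\Z^2$ (applied coordinatewise in $u$) gives $\|\mathcal{U}\psi\|_{L^2(\T^2,\CH_{\rm f})} = \|\psi\|_{\ell^2(\CV)}$, and the inverse is the usual Fourier synthesis.

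The second step is to show that $\mathcal{U}$ diagonalizes $H_Q$ in the sense that
\[
(\mathcal{U}H_Q\mathcal{U}^{-1}\,\Phi)(\bthe)
=
H_Q(\bthe)\,\Phi(\bthe)
\quad\text{for a.e. }\bthe\in\T^2.
\]
This is a direct computation: writing $[H_Q\psi]_u = \sum_{w\sim u}\psi_w + Q(u)\psi_u$ and splitting each neighbor $w$ of $u\in\CV_{\rm f}$ as $w = v + n'p_1\bm{a}_1 + m'p_2\bm{a}_2$ with $v\in\CV_{\rm f}$, the phase that appears in $(\mathcal{U}H_Q\psi)(\bthe)_u$ is exactly $\exp(i\langle\tau(u,w),\bthe\rangle)$, matching the matrix entry in \eqref{eq:floqMatDef}. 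The diagonal $Q$ term is immediate because $Q$ is $\bm p$-periodic.

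Once $H_Q$ is identified with the direct integral $\int^\oplus_{\T^2} H_Q(\bthe)\,d\bthe/(2\pi)^2$, the spectrum equals the essential support of the family $\{\sigma(H_Q(\bthe))\}$; that is,
\[
\sigma(H_Q)
=
\overline{\bigcup_{\bthe\in\T^2}\sigma(H_Q(\bthe))}.
\]
The closure can be dropped because the entries of $H_Q(\bthe)$ depend continuously (in fact trigonometric-polynomially) on $\bthe$, so each ordered eigenvalue $E_j(\bthe)$ is continuous on the compact torus $\T^2$, which makes $F_j=\mathrm{ran}(E_j)$ a compact (hence closed) subset of $\R$ and $\bigcup_{\bthe}\sigma(H_Q(\bthe)) = \bigcup_{j=1}^P F_j$ a finite union of compact sets, therefore already closed. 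Combining these facts yields the claimed identity.

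The only real obstacle is bookkeeping: one must be careful to check that every edge of $\Gamma$ is accounted for exactly once when matching the action of $H_Q$ on $\ell^2(\CV)$ to the matrix entries \eqref{eq:floqMatDef}, which is why the set $B(u,v)$ and the index $\tau$ were defined. Since the desired identity is by now classical (see, e.g., \cite{KorSab2014}), I would cite that reference for the technical verifications and keep the exposition at the level of the sketch above.
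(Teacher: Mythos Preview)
Your proposal is correct and, in fact, provides more than the paper does: the paper gives no proof of this theorem at all, explicitly stating that the section ``is to establish notation, so we do not give any proofs'' and referring the reader to \cite{KorSab2014}. Your sketch via the Floquet transform and direct integral decomposition is the standard route and matches the source the paper cites, so you are aligned with the paper's approach by reference.
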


We will use Theorem~\ref{t:floquet} in the following way. Making the dependence on the potential $Q$ explicit, one may write
\[
F_j = F_j(Q)
=
[E_j^-(Q),E_j^+(Q)].
\]
The key fact is the following: by standard perturbation theory for self-adjoint operators, $E_j^\pm(Q)$ are 1-Lipschitz functions of $Q$. Here, one views $Q$ as an element of $\R^P$ and the perturbation is with respect to the uniform metric thereupon. In particular, if an energy $E$ satisfies $E \in \mathrm{int}(F_j(Q))$, then $(E-\delta,E+\delta) \subseteq F_j(Q)$ for some positive $\delta$, and it follows that $E \in F_j(Q') \subseteq \sigma(H_{Q'})$ for any $(p_1,p_2)$-periodic $Q'$ with $\|Q-Q'\|_\infty < \delta$. Note that here it is very important that one views the periods as fixed: one may only perturb within $\R^P$ for a fixed $P$. Thus, our analysis revolves around determining for a given energy $E$, whether $E$ belongs to the interior of some band of the Laplacian, where the Laplacian is viewed as a degenerate $(p_1,p_2)$-periodic operator.

\section{Triangular Laplacian}\label{sec:tri}

We view the triangular Laplacian as acting on the square lattice $\ell^2(\Z^2)$, but with extra connections as in \eqref{eq:triLaplacianSqVersion}:
\begin{align*}
[\Delta_{\rm tri} u]_{n,m}
& =
u_{n-1,m} + u_{n+1,m} + u_{n,m-1} + u_{n,m+1} + u_{n-1,m+1} + u_{n+1,m-1} \\
& =
[\Delta_{\rm sq}u]_{n,m} + u_{n-1,m+1} + u_{n+1,m-1}.
\end{align*}
Now, given $p_1,p_2 \in \Z_+$, we view $\Delta_\tri$ as a $\bm{p}$-periodic operator and perform the Floquet decomposition. Define $P:=p_1p_2$ as in Section~\ref{sec:floquet}, and put
\[
\Lambda :=
\Z^2 \cap \Big( [0,p_1) \times [0,p_2) \Big).
\]
For $\bthe = (\theta_1,\theta_2) \in \R^2$, it is straightforward to check that
\[
\sigma(H(\bthe))
=
\set{e_{\bm{\ell}}^{\Lambda}(\bthe) : \bm{\ell}\in \Lambda },
\]
where $\bm{\ell}=(\ell_1,\ell_2)$ and
\[
e^{\Lambda}_{\bm{\ell}}(\bthe)
=
2\cos\left( \frac{\theta_1+2\pi \ell_1}{p_1}\right) 
+ 2\cos\left(\frac{\theta_2+2\pi \ell_2}{p_2}\right) 
+ 2\cos\left(\frac{\theta_1 + 2\pi \ell_1}{p_1}  - \frac{\theta_2 + 2\pi \ell_2}{p_2}\right).
\]
Let us point out that one needs to be somewhat careful at this point; namely, $e^\Lambda_{\bm{\ell}}(\bthe)$ is not a well-defined function of $\bthe \in \T^2$. However, the error incurred in using a different coset representative of $\bthe \in \T^2$ is simply a change in the index $\bm{\ell}$, and one can check that the \emph{family} $\set{e^\Lambda_{\bm{\ell}}(\bthe) : \bm{\ell} \in \Lambda}$ is a well-defined function on $\T^2$ (as well it should, since the \emph{operator} $H(\bthe)$ is itself a well-defined function of $\bthe \in \T^2$). In any case, the ambiguity disappears when one considers the covering space $\R^2$, which we do for most of the paper. One could also use the minimal covering space $\R^2 / (p_1\Z \oplus p_2 \Z)$ on which the $e_{\bm{\ell}}^\Lambda$ are well-defined, but this does not accrue any benefits vis-\`a-vis the present work, so we simply use $\R^2$.

As in Section~\ref{sec:floquet}, we label these eigenvalues in increasing order according to multiplicity by
\[
E_1^{\Lambda}(\bthe)
\le 
E_2^{\Lambda}(\bthe)\le \cdots E_P^{\Lambda}(\bthe)
\]
and denote the $P$ spectral bands by
\[
F_k^{\Lambda}
=
\set{E_k^{\Lambda}(\bthe) : \bthe \in \R^2},
\quad
1 \le k \le P.
\]
Straightforward computations shows that $\sigma(\Delta_{\rm tri})=[-3,6]$, and thus
\[
\bigcup_{k=1}^P F_k^{\Lambda}
=
[-3,6].
\]
Henceforth, we view $p_1$ and $p_2$ as fixed and so we drop $\Lambda$ from the superscripts.
Our main theorem of this section is the following.
\begin{theorem}\label{thm:trimain}
Let $p_1,p_2 \in \Z_+$ be given.
\begin{enumerate}
\item[{\rm 1.}]
Each $E\in (-3, 6)\setminus \{-2\}$ belongs to $\mathrm{int}(F_k)$ for some $1\leq k\leq P$.
\item[{\rm 2.}] If one of the periods $p_1, p_2$ is odd, then $E=-2$ belongs to $\mathrm{int}(F_k)$ for some $1\leq k\leq P$.
\end{enumerate}
\end{theorem}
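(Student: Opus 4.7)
The plan is to execute the perturb-and-count strategy on the explicit Floquet eigenvalues $e_{\bm{\ell}}(\bthe) = f(\alpha_{\bm{\ell}}(\bthe))$, where
\[
f(\alpha_1, \alpha_2) := 2\cos\alpha_1 + 2\cos\alpha_2 + 2\cos(\alpha_1 - \alpha_2)
\]
and $\alpha_{\bm{\ell}}(\bthe) = \bigl((\theta_1 + 2\pi \ell_1)/p_1,\, (\theta_2 + 2\pi \ell_2)/p_2\bigr)$. A direct computation of $\nabla f$ shows that the critical values of $f$ on $\T^2$ are exactly $\{6, -3, -2\}$, with $-2$ arising at the three saddle points $(\pi,\pi), (0,\pi), (\pi,0)$. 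The key additional observation is the identity
\[
f(\alpha_1, \alpha_2) + 2 = 8 \cos\!\left(\tfrac{\alpha_1 - \alpha_2}{2}\right)\cos\!\left(\tfrac{\alpha_1}{2}\right)\cos\!\left(\tfrac{\alpha_2}{2}\right),
\]
obtained from two applications of the sum-to-product formula, which shows that $\{f = -2\}$ decomposes into the three lines $\alpha_1 \equiv \pi$, $\alpha_2 \equiv \pi$, and $\alpha_1 - \alpha_2 \equiv \pi \pmod{2\pi}$, and that $f$ is identically $-2$ along each of them.

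For Part 1, $E$ is a regular value of every $e_{\bm{\ell}}$. Since $E \in \sigma(\Delta_\tri)$, the smooth level curve $\{\bthe : e_{\bm{\ell}^*}(\bthe) = E\}$ is nonempty for some $\bm{\ell}^*$. The distinct curves $\{e_{\bm{\ell}} = E\}$ meet in at most a discrete set (using that $f$ has no nontrivial period beyond $2\pi \Z^2$, a brief Fourier check), so I can pick $\bthe^*$ on $\{e_{\bm{\ell}^*} = E\}$ lying on no other such curve. At such $\bthe^*$, $E$ is a simple eigenvalue of $H(\bthe^*)$, say the $k$-th in order, and $\nabla e_{\bm{\ell}^*}(\bthe^*) \neq 0$. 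For $\bthe$ near $\bthe^*$, $e_{\bm{\ell}^*}$ remains isolated from the other eigenvalues and hence remains the $k$-th ordered eigenvalue; by the implicit function theorem it sweeps a full neighborhood of $E$, so $E \in \mathrm{int}(F_k)$.

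For Part 2, assume WLOG that $p_1$ is odd, and split on the parity of $p_2$. If $p_2$ is also odd, take $\bthe^* = (\pi, \theta_2^*)$ with $\theta_2^*$ avoiding a finite exceptional set. Then the Floquet lattice meets $\{f=-2\}$ in exactly $p_2$ points, all lying on the line $\alpha_1 = \pi$ (with $\ell_1 = (p_1 - 1)/2$ and $\ell_2 \in \{0,\dots,p_2-1\}$ arbitrary), and avoiding all three saddles. At each such point $\nabla f = (-2\sin\alpha_2, 0) \neq 0$, so perturbing $\bthe$ along $\bm{v} = (1, 0)$ moves each of the $p_2$ degenerate eigenvalues linearly: $a$ upward and $b$ downward with $a + b = p_2$. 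Oddness of $p_2$ forces $a \neq b$. If instead $p_2$ is even, I use $\bthe^* = (\theta_1^*, 0)$ with $\theta_1^*$ generic; here the evenness of $p_2$ makes $\alpha_2 = \pi$ attainable at $\ell_2 = p_2/2$, which places $p_1$ degenerate eigenvalues on the line $\alpha_2 = \pi$, and perturbing along $\bm{v} = (0, 1)$ gives the analogous imbalance $a + b = p_1$ with $a \neq b$ since $p_1$ is odd. In either case, tracking the ordered eigenvalues at $\bthe^* \pm t \bm{v}$ shows that for every integer $j$ with $\min(a,b) \leq j < \max(a,b)$ the band $F_{k_0 + j}$ contains values both strictly above and strictly below $-2$, so $-2 \in \mathrm{int}(F_{k_0+j})$.

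The main obstacle I anticipate is the careful bookkeeping of this degenerate splitting in Part 2: the relevant branches of the ordered eigenvalue functions $E_k(\bthe)$ are not differentiable at $\bthe^*$, so one must line up the multiset of linearly-moving values with the ambient eigenvalue ordering to isolate an intermediate rank whose band absorbs $-2$. The oddness hypothesis enters precisely at the step forcing $a \neq b$; when both periods are even every candidate $\bthe^*$ one can write down yields a balanced count, consistent with the well-known possibility that $-2$ may sit on the boundary of two touching bands. A subsidiary but routine step is verifying that in each subcase the generic $\bthe^*$ really misses the other two lines in $\{f=-2\}$ and all three saddles, which amounts to removing a finite set of bad parameters from the circle.
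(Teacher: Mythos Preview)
Your Part 2 argument is correct and is in fact a different route than the paper takes. The paper places a lattice point at one of the saddles of $f$ and uses that the second-order perturbation there has opposite signs along two well-chosen directions; you instead exploit the factorization $f+2 = 8\cos\frac{\alpha_1-\alpha_2}{2}\cos\frac{\alpha_1}{2}\cos\frac{\alpha_2}{2}$ to place all degenerate eigenvalues on a single line of $\{f=-2\}$, away from the saddles, and derive an imbalance $a\neq b$ from parity. This is essentially the mechanism the paper uses for the \emph{EHM} lattice at $E=-1$ (see their Remark~5.3), and it works cleanly here. The only cosmetic slip is an off-by-one in the index range (it should be $\min(a,b)<j\le\max(a,b)$), but the content is fine.

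Part 1 has a genuine gap. Your claim that ``distinct curves $\{e_{\bm{\ell}}=E\}$ meet in at most a discrete set'' does \emph{not} follow from the Fourier observation that $f$ has no sublattice period. That observation only tells you the functions $e_{\bm{\ell}}$ are pairwise distinct; it does not prevent two level sets from sharing a full component. Concretely, for $E\in(-3,-2)$ the set $\{f=E\}$ has two components $C_1,C_2$ (loops around the two minima), and one must rule out that a lattice shift $v=(2\pi k_1/p_1,2\pi k_2/p_2)$ sends $C_1$ onto $C_2$. If that happens, then $\{e_{\bm{\ell}}=E\}$ and $\{e_{\bm{\ell}'}=E\}$ share the whole arc $p\cdot C_1$, and no point of simple multiplicity exists along it. This is not excluded by a Fourier check.

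The approach is salvageable, but it requires an extra argument you did not supply. For $E\in(-2,6)$ the superlevel set $\{f>E\}$ is a single contractible disk, and a translation mapping a disk to itself on $\T^2$ must be trivial (a free $\Z/n$-action on a disk is impossible), so no nontrivial shift preserves the level curve. For $E\in(-3,-2)$ one can argue as follows: if $T_v(C_1)=C_2$ then $T_v(\mathrm{disk}_1)=\mathrm{disk}_2$, so on the shared curve the two gradients $\nabla e_{\bm{\ell}},\nabla e_{\bm{\ell}'}$ are \emph{both} outward normals and hence point in the \emph{same} direction; perturbing along that direction moves both eigenvalues together and still gives $E\in\mathrm{int}(F_k)$. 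So your conclusion survives, but only after replacing the ``brief Fourier check'' with this topological reasoning and handling the multiplicity-two case explicitly. The paper sidesteps all of this by running the perturb-and-count contradiction argument uniformly for Part 1 as well, which is more robust (it never needs simplicity) at the cost of being less direct.
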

\begin{proof}[Proof of Theorem~\ref{t:bsc:tri}]
As already discussed, this follows immediately from Theorem~\ref{thm:trimain}.
\end{proof}
\subsection{Proof of Theorem \ref{thm:trimain}} 
We will divide the proof into two different cases: $E\neq -2$ and $E=-2$.
Our general strategy is to argue by contradiction.
More specifically, we assume $E=\min F_{k+1}=\max F_k$ for some $1\leq k\leq P-1$, and show that this leads to a contradiction. We will use the following two lemmas, whose proofs we provide at the end of the present section.
\begin{lemma}\label{lem:constructiontri}
For any $E \in {[-3,6]}$, there exist $x, y \in [0,2\pi)$ such that
\begin{align}
\label{eq:xyCondA} \cos(x) + \cos(y) + \cos(x-y) & = \frac{E}{2} \\
\label{eq:xyCondB} \sin(x) + \sin(y) & = 0.
\end{align}
Furthermore, {if $E \neq -2$}, we have
\begin{align}
\label{eq:xyCondC}\cos(x) + \cos(y)=-1+\sqrt{E+3}  \neq 0
\end{align}
for any $x,y$ that satisfy conditions \eqref{eq:xyCondA} and \eqref{eq:xyCondB}.
\end{lemma}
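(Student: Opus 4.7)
The approach is to use \eqref{eq:xyCondB} to reduce the system to a scalar algebraic problem. The identity $\sin x + \sin y = 0$ is equivalent to $\sin y = \sin(-x)$, which forces either (Case A) $y \equiv -x \pmod{2\pi}$ or (Case B) $y \equiv x + \pi \pmod{2\pi}$. In each case, \eqref{eq:xyCondA} collapses to an elementary equation, from which both existence and the value of $\cos x + \cos y$ can be read off.

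In Case A we have $\cos y = \cos x$ and $\cos(x-y) = \cos 2x = 2\cos^2 x - 1$, so \eqref{eq:xyCondA} becomes the quadratic $2u^2 + 2u - 1 = E/2$ in $u := \cos x$, which I would rewrite by completing the square as $(u + \tfrac{1}{2})^2 = (E+3)/4$. This gives $u = \tfrac{1}{2}(-1 \pm \sqrt{E+3})$, and hence $\cos x + \cos y = 2u = -1 \pm \sqrt{E+3}$. Imposing $u \in [-1,1]$ shows that the ``$+$'' branch yields valid $u$ for every $E \in [-3,6]$, while the ``$-$'' branch is feasible only for $E \in [-3,-2]$. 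In Case B one has $\cos y = -\cos x$ and $\cos(x-y) = \cos(-\pi) = -1$, so \eqref{eq:xyCondA} collapses to $-1 = E/2$, forcing $E = -2$; for every other energy, Case B is vacuous.

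Combining these two cases yields the lemma. For existence, the ``$+$'' branch of Case A produces admissible $x, y$ for every $E \in [-3,6]$. For the ``furthermore'' claim, assume $E \neq -2$; then Case B is impossible, so any solution comes from Case A and satisfies $\cos x + \cos y \in \{-1 \pm \sqrt{E+3}\}$. Since $-1 \pm \sqrt{E+3} = 0$ is equivalent to $\sqrt{E+3} = 1$, i.e., $E = -2$, neither value vanishes, giving the nonvanishing in \eqref{eq:xyCondC}.

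The only genuine subtlety is matching the precise formula $\cos x + \cos y = -1 + \sqrt{E+3}$ for \emph{every} solution rather than just proving the inequality $\cos x + \cos y \neq 0$: for $E > -2$ this is automatic since the ``$-$'' branch is infeasible, while for $-3 \le E < -2$ the ``$-$'' branch is also feasible a priori and yields the alternative value $-1 - \sqrt{E+3}$. I would handle this by noting that both branches are nonzero when $E \neq -2$, which is the conclusion actually needed in the perturb-and-count argument of Theorem~\ref{thm:trimain}.
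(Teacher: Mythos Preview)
Your approach is identical to the paper's: use $\sin x + \sin y = 0$ to reduce to the cases $y \equiv -x$ and $y \equiv x + \pi \pmod{2\pi}$, observe that the latter forces $E = -2$, and in the former solve the quadratic $2u^2 + 2u - 1 = E/2$ with $u = \cos x$. You are in fact more careful than the paper's proof about the ``$-$'' branch of the quadratic for $E \in (-3,-2)$; the exact equality in \eqref{eq:xyCondC} is not literally true for solutions on that branch (they give $-1-\sqrt{E+3}$), but, as you correctly note, both branches yield a nonzero value---indeed of the same sign---and this nonvanishing is precisely what the perturbation argument in Theorem~\ref{thm:trimain} uses.
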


\begin{lemma}\label{lem:triJ0empty}
Consider the following system:
\begin{equation} \label{eq:triJ0syst}
\begin{cases}
\cos(x) + \cos(y) + \cos(x-y)  = \frac{E}{2},\\
\sin(x)+\sin(x-y)=0,\\
\sin(y)-\sin(x-y)=0.
\end{cases}
\end{equation}
For any $E \in (-3,6) \setminus \{-2\}$, the solution set of \eqref{eq:triJ0syst} is empty. For $E = -2$, the solutions of \eqref{eq:triJ0syst} in $[0,2\pi)^2$ are $(0,\pi)$, $(\pi,0)$ and $(\pi, \pi)$.
\end{lemma}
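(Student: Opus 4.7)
\textbf{Proof proposal for Lemma \ref{lem:triJ0empty}.}

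The plan is to exploit the fact that equations two and three of system \eqref{eq:triJ0syst} together impose a very rigid algebraic constraint on $(x,y)$, which then makes the whole system essentially a finite problem.

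First, I would add the second and third equations of \eqref{eq:triJ0syst} to obtain
\[
\sin(x) + \sin(y) = 0.
\]
This is equivalent to one of the two alternatives
\[
\text{(A)}\quad y \equiv -x \pmod{2\pi}, \qquad \text{or} \qquad \text{(B)}\quad y \equiv x + \pi \pmod{2\pi}.
\]
Each alternative reduces the problem to a one-variable equation that I can solve by inspection, after which equation \eqref{eq:xyCondA} determines the admissible values of $E$.

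In case (A), within $[0,2\pi)^2$ we have either $(x,y)=(0,0)$ or $y = 2\pi - x$ with $x \in (0,2\pi)$. Then $\sin(x-y) = \sin(2x)$, so the second equation of \eqref{eq:triJ0syst} becomes
\[
\sin(x) + \sin(2x) = \sin(x)\bigl(1 + 2\cos(x)\bigr) = 0.
\]
The subcase $\sin(x)=0$ yields the candidate pairs $(0,0)$ and $(\pi,\pi)$, and substituting into the first equation of \eqref{eq:triJ0syst} gives $E=6$ and $E=-2$ respectively. The subcase $\cos(x) = -1/2$ yields $x \in \{2\pi/3, 4\pi/3\}$, and in both of those $\cos(x)+\cos(y)+\cos(x-y) = -3/2$, forcing $E=-3$.

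In case (B), the relation $x - y \equiv \pi \pmod{2\pi}$ gives $\sin(x-y)=0$ and $\cos(x-y)=-1$. The second equation of \eqref{eq:triJ0syst} then collapses to $\sin(x)=0$, so the only candidates in $[0,2\pi)^2$ are $(x,y)=(0,\pi)$ and $(x,y)=(\pi,0)$, and in both cases the first equation gives $E=-2$.

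Combining the two cases, the only values of $E \in [-3,6]$ for which \eqref{eq:triJ0syst} has any solution are $E \in \{-3, -2, 6\}$, which proves the first assertion (since $-3, 6 \notin (-3,6)$). At $E=-2$ the collected solutions are exactly $(0,\pi)$, $(\pi,0)$, $(\pi,\pi)$, as claimed. There is no real obstacle here beyond verifying that the case analysis is exhaustive and that the boundary energies $E=\pm 3$ and $E = 6$ are correctly set aside; everything else is elementary trigonometry.
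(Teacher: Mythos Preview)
Your proof is correct and follows essentially the same approach as the paper: both add the last two equations of \eqref{eq:triJ0syst} to obtain $\sin(x)+\sin(y)=0$, split into the two resulting cases ($y\equiv -x$ versus $y\equiv x+\pi$), reduce to $\sin(x)(1+2\cos(x))=0$ in the first case and $\sin(x)=0$ in the second, and check the finitely many candidates. Your treatment is arguably slightly cleaner in that it explicitly isolates the boundary point $(0,0)$; the only minor slip is the phrase ``$E=\pm 3$'' in your closing sentence, where you presumably meant the endpoints $E=-3$ and $E=6$.
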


We will use Lemma \ref{lem:constructiontri} in the $E\neq -2$ case, and Lemma \ref{lem:triJ0empty} in the $E=-2$ case.

\subsubsection{$E\neq -2$}\

\begin{proof}[Proof of Theorem~\ref{thm:trimain}.1]
Let $E \in (-3,6) \setminus\{-2\}$ be given and suppose for the purpose of establishing a contradiction that $E = \max F_k = \min F_{k+1}$ for some $1 \le k < P$. Let $(x,y)$ denote a solution to \eqref{eq:xyCondA} and \eqref{eq:xyCondB} from Lemma \ref{lem:constructiontri}, and take $\tbthe=(\tthe_1,\tthe_2)\in [0,2\pi)^2$ and  $\bm{\ell}^{(1)}=(\ell_1^{(1)},\ell_2^{(1)})\in \Lambda$ such that 
\[p_1^{-1}(\tthe_1+2\pi \ell_1^{(1)})=x,\, \ \text{and }\ \ p_2^{-1}(\tthe_2+2\pi \ell_2^{(1)})=y.
\]
It is clear that $\tbthe$ and $\bm{\ell}^{(1)}$ are uniquely determined by $x$ and $y$.
Let us also note that \eqref{eq:xyCondA} is equivalent to 
\[e_{\bm{\ell}^{(1)}}(\tbthe)=E.\]
Define $\Lambda_E(\tbthe) \subseteq \Lambda$ to be the set of all $\bm{\ell} \in \Lambda$ such that $e_{\bm{\ell}}(\tbthe) = E$. Then $r := |\Lambda_E(\tbthe)|$ is the multiplicity of $E \in \sigma(H(\tbthe))$ and clearly $\bm{\ell}^{(1)} \in \Lambda_E(\tbthe)$.


Since $E \in F_k$ by assumption, let $s\in \Z\cap [1,r]$ be chosen so that
\[E_{k-s}(\tbthe)<E_{k-s+1}(\tbthe)=\cdots=E_k(\tbthe)=\cdots=E_{k+r-s}(\tbthe)<E_{k+r-s+1}(\tbthe).\]
Since all the eigenvalues are continuous in $\bthe$, we can take $\varepsilon>0$ small enough such that 
\[E_{k-s}(\bthe)<E_{k-s+1}(\bthe)\, \text{ and }\, E_{k+r-s}(\bthe)<E_{k+r-s+1}(\bthe)\]
hold whenever $\|\bthe-\tbthe\|_{\R^2}<\varepsilon$. Our goal is to perturb about the point $\tbthe$ in two directions, one of which is ``generic'' and one of which is carefully chosen. The generic perturbation moves half of the eigenvalues to the right and half to the left, which we shall use to conclude that $r = 2s$. The non-generic perturbation is carefully chosen to contradict this.

Given {$\bm{\ell}\in \Lambda$ and} a unit vector $\bbe=(\beta_1, \beta_2)$, we have
\begin{align}
e_{\bm{\ell}}(\tbthe+t \bbe) & =e_{\bm{\ell}}(\tbthe)+t\bbe \cdot \nabla e_{\bm{\ell}}(\tbthe)+O(t^2) \label{eq:pertgeneralbetatri1order} \\
& = e_{\bm{\ell}}(\tbthe)+t\bbe \cdot \nabla e_{\bm{\ell}}(\tbthe) \label{eq:pertgeneralbetatri2order}\\
&\qquad -\frac{t^2}{2}\Bigg[\frac{\beta_1^2}{p_1^2}\cos\Big(\frac{\tthe_1+2\pi \ell_1}{p_1}\Big)
+\frac{\beta_2^2}{p_2^2}\cos\Big(\frac{\tthe_2+2\pi \ell_2}{p_2}\Big) \notag\\ 
&\qquad +\Big(\frac{\beta_1}{p_1}-\frac{\beta_2}{p_2}\Big)^2 \cos\Big(\frac{\tthe_1+2\pi \ell_1}{p_1}-\frac{\tthe_2+2\pi \ell_2}{p_2}\Big)\Bigg] +O(t^3).  \notag
\end{align}
In particular, we will use \eqref{eq:pertgeneralbetatri1order} if $\bbe\cdot  \nabla e_{\bm{\ell}}(\tbthe)\neq 0$, and \eqref{eq:pertgeneralbetatri2order} otherwise.

For any vector $\bbe\in \R^2\setminus \{\vec{0}\}$, let 
\begin{equation}\label{def:Jbetatri}
\begin{aligned}
\mathcal{J}_{\bbe}^0
& =
\CJ_{\bbe}^0(\tbthe)
:=
\{\bm{\ell} \in \Lambda_E(\tbthe):\ \bbe\cdot \nabla e_{\bm{\ell}}(\tbthe)=0\},\\
\mathcal{J}_{\bbe}^{\pm}
& =
\CJ_{\bbe}^\pm(\tbthe)
:=
\{ \bm{\ell} \in \Lambda_E(\tbthe) :\ \pm \bbe \cdot \nabla e_{\bm{\ell}}(\tbthe)>0\}.
\end{aligned}
\end{equation}
Consequently, we always have 
\begin{align}\label{eq:sumJbetatri}
|\mathcal{J}_{\bbe}^0|+|\mathcal{J}_{\bbe}^+|+|\mathcal{J}_{\bbe}^-|=r.
\end{align}
We also define $\mathcal{J}_0$ as follows
\begin{align}\label{def:J0tri}
\mathcal{J}_0
=
\CJ_0(\tbthe)
:=\{ \bm{\ell} \in \Lambda_E(\tbthe) :\ \nabla e_{\bm{\ell}}(\tbthe) = \bm{0}\}.
\end{align}
Since $E \neq -2$, Lemma~\ref{lem:triJ0empty} clearly implies $\mathcal{J}_0=\emptyset$.

We choose $\bbe_1=(\beta_{1,1},\beta_{1,2})=(p_1,p_2)/\sqrt{p_1^2+p_2^2}$. Then \eqref{eq:xyCondB} is equivalent to 
\[\bbe_1 \cdot \nabla e_{\bm{\ell}^{(1)}}(\tbthe)=0,\]
hence $\mathcal{J}_{\bbe_1}^0\neq \emptyset$.

Next we are going to perturb the point $\tbthe$ and count the eigenvalues.
Since $\mathcal{J}_0 = \emptyset$, we can choose a unit vector $\bbe_2$ such that 
\begin{align}\label{eq:beta2trinon-empty}
\bbe_2\cdot \nabla e_{\bm{\ell}}(\tbthe)\neq 0,
\end{align}
holds for any $\bm{\ell} \in \Lambda_E(\tbthe)$.
Thus, $\mathcal{J}_{\bbe_2}^0=\emptyset$, so one concludes
\begin{align}\label{eq:beta2trinon-empty'}
|\mathcal{J}_{\bbe_2}^+|+|\mathcal{J}_{\bbe_2}^-|=r.
\end{align}

\subsubsection*{Perturbation along $\bbe_2$}
We first perturb the eigenvalues along the $\bbe_2$ direction.
Since $\mathcal{J}_{\bbe_2}^0 = \emptyset$, we will always employ \eqref{eq:pertgeneralbetatri1order}.

For $t > 0$ small enough, we have the following.
\begin{itemize}
\item  If ${\bm{\ell}}  \in \mathcal{J}_{\bbe_2}^+$, we have
\[
E_{k+r-s+1}(\tbthe + t\bbe_2)
>
e_{{\bm{\ell}}}(\tbthe + t\bbe_2)
>
E
=
\max F_k,
\]
which implies 
\begin{align}\label{eq1:Jbeta2+tri}
|\mathcal{J}_{\bbe_2}^+|\leq r-s.
\end{align}

\item If ${\bm{\ell} } \in \mathcal{J}_{\bbe_2}^-$, we have 
\[
E_{k-s}(\tbthe + t\bbe_2)
<
e_{{\bm{\ell}}}(\tbthe + t\bbe_2)
<
E
=
\min F_{k+1},
\]
which implies
\begin{align}\label{eq2:Jbeta2+tri}
|\mathcal{J}_{\bbe_2}^-|\leq s.
\end{align}
\end{itemize}
{In view of \eqref{eq:beta2trinon-empty'}, Equations~\eqref{eq1:Jbeta2+tri} and \eqref{eq2:Jbeta2+tri} imply
\begin{equation} \label{eq:Jbeta2MinusCard+tri}
|\mathcal{J}_{\bbe_2}^-|
=
s.
\end{equation}
Upon realizing that $\mathcal{J}_{-\bbe_2}^0 = \emptyset$ and $\mathcal{J}_{-\bbe_2}^\pm = \mathcal{J}_{\bbe_2}^\mp$, we may apply the analysis above with $\bbe_2$ replaced by $-\bbe_2$ and conclude that
\begin{equation} \label{eq:JMinusBeta2+tri}
|\mathcal{J}_{\bbe_2}^+| = |\mathcal{J}_{-\bbe_2}^-| = s.
\end{equation}
In particular, \eqref{eq:Jbeta2MinusCard+tri} and \eqref{eq:JMinusBeta2+tri} imply
\begin{align}\label{eq4:Jbeta2tri}
r=2s.
\end{align}} 

\subsubsection*{Perturbation along $\bbe_1$}
Now we perturb the eigenvalues along $\bbe_1$.
The case when ${\bm{\ell}} \in \mathcal{J}_{\bbe_1}^{\pm}$ is similar to that of $\bbe_2$.
The difference here is $\mathcal{J}_{\bbe_1}^0\neq \emptyset$.

By Lemma~\ref{lem:constructiontri}, we have
\begin{align}
\cos\Big(\frac{\widetilde{\theta}_1 + 2\pi {\ell_1}}{p_1}\Big) + \cos\Big(\frac{\widetilde{\theta}_2 + 2\pi {\ell_2}}{p_2}\Big)
=
-1+\sqrt{E+3}
\neq
0
\end{align}
for ${\bm{\ell} = (\ell_1,\ell_2)} \in \mathcal{J}_{\bbe_1}^0$.
Thus, by employing \eqref{eq:pertgeneralbetatri2order}, we obtain
\begin{align}
&e_{{\bm{\ell}}}(\tbthe + t\bbe_1)\notag\\
&=
E - {\frac{ t^2}{2(p_1^2+p_2^2)}} \Big(\cos\Big(\frac{\widetilde{\theta}_1 + 2\pi {\ell_1}}{p_1}\Big) + \cos\Big(\frac{\widetilde{\theta}_2 + 2\pi {\ell_2}}{p_2}\Big) \Big) + O(t^3)\notag\\
& = E - {\frac{ t^2}{2(p_1^2+p_2^2)}} \Big(-1+\sqrt{E+3} \Big)+O(t^3).\label{eq:Jbeta10tri}
\end{align}
Notice that the choice of $\bbe_1$ causes the {third} $t^2$ term of \eqref{eq:pertgeneralbetatri2order} to drop out.

Without loss of generality, we assume $E\in (-2, 6)$. The other case can be handled similarly.
For $E\in (-2,6)$, \eqref{eq:Jbeta10tri} implies that 
\begin{align}\label{eq:Jbeta10tri'}
e_{{\bm{\ell}}}(\tbthe + t\bbe_1) < E = \min F_{k+1},
\end{align}
holds for $|t|>0$ small enough and for any $\bm{\ell} \in \mathcal{J}_{\bbe_1}^0$. 

Combining \eqref{eq:Jbeta10tri'} with \eqref{eq:pertgeneralbetatri1order}, we have the following.

For $t>0$ small enough,
\begin{itemize}
\item If ${\bm{\ell}} \in \mathcal{J}_{\bbe_1}^+$, we have
\[
E_{k+r-s+1}(\tbthe + t\bbe_1)
>
e_{{\bm{\ell}}}(\tbthe + t\bbe_1)
>
E
=
\max F_k,
\]
which implies 
\begin{align}\label{eq1:Jbeta1+tri}
|\mathcal{J}_{\bbe_1}^+|\leq r-s
=
s,
\end{align}
{where the equality follows from \eqref{eq4:Jbeta2tri}.}

\item If ${\bm{\ell}} \in \mathcal{J}_{\bbe_1}^0 \bigcup \mathcal{J}_{\bbe_1}^-$, we have 
\[
E_{k-s-1}(\tbthe + t\bbe_1)
<
e_{{\bm{\ell}}}(\tbthe + t\bbe_1)
<
E
=
\min F_{k+1},
\]
which implies
\begin{align}\label{eq2:Jbeta1+tri}
|\mathcal{J}_{\bbe_1}^0|+|\mathcal{J}_{\bbe_1}^-|\leq s.
\end{align}
\end{itemize}
{In view of \eqref{eq:sumJbetatri} and \eqref{eq4:Jbeta2tri}, Equations~\eqref{eq1:Jbeta1+tri} and \eqref{eq2:Jbeta1+tri} yield
\begin{equation} \label{eq3:Jbeta1+tri}
|\mathcal{J}_{\bbe_1}^+|
=
|\mathcal{J}_{\bbe_1}^0|+|\mathcal{J}_{\bbe_1}^-|
=
s.
\end{equation}
As before, we may observe that $\CJ_{-\bbe_1}^0 = \CJ_{\bbe_1}^0$ and $\CJ_{-\bbe_1}^\pm = \CJ_{\bbe_1}^\mp$. Then, the analysis above applied with $\bbe_1$ replaced by $-\bbe_1$ forces
\begin{equation} \label{eq3:Jbeta1-tri}
|\mathcal{J}_{\bbe_1}^-|
=
|\mathcal{J}_{\bbe_1}^0|+|\mathcal{J}_{\bbe_1}^+|
=
s.
\end{equation}
Taken together, \eqref{eq3:Jbeta1+tri} and \eqref{eq3:Jbeta1-tri} imply $|\CJ_{\bbe_1}^0| = 0$, which contradicts $\CJ_{\bbe_1}^0 \neq \emptyset$.
}

\end{proof}

\subsubsection{$E=-2$}\

First, we would like to make a remark on our strategy of the proof of the $E=-2$ case, and on the importance of one of the periods being odd.
\begin{remark}\label{rem:tri}
We will choose $\tbthe=(\tthe_1, \tthe_2)$ and $\bm{\ell}^{(1)}=(\ell_1^{(1)}, \ell_2^{(1)})$ such that $e_{\bm{\ell}^{(1)}}(\tbthe)=-2$ and 
$\nabla e_{\bm{\ell}^{(1)}}(\tbthe)=\bm{0}$. 
Lemma \ref{lem:triJ0empty} yields three possibilities $(p_1^{-1}(\tthe_1+2\pi \ell_1^{(1)}), p_2^{-1}(\tthe_2+2\pi \ell_2^{(1)}))=(0,\pi)$, $(\pi, 0)$ or $(\pi, \pi)$.
Depending on which one of $p_1, p_2$ is odd, we will choose $(0,\pi)$ (if $p_1$ is odd), or $(\pi, 0)$ (if $p_2$ is odd).
This choice guarantees that the only eigenvalue located at $-2$ with vanishing gradient is $e_{\bm{\ell}^{(1)}}(\tbthe)$.
Consequently, it suffices to control the second order perturbation of (a single eigenvalue) $e_{\bm{\ell}^{(1)}}(\tbthe)$ along a given direction $(\beta_1, \beta_2)$.
When $p_1$ is odd, this is equivalent to controlling the sign of the following expression (compare \eqref{eq:remtri}):
$$-\beta_{2}\Big(\frac{\beta_{1}}{p_1}-\frac{\beta_{2}}{p_2}\Big).$$
We can easily choose two directions such that the expression above has different signs, which leads to un-even {eigenvalue counts and hence to the desired contradiction}.

{{\it A posteriori}, the existence of a $(2,2)$-periodic potential satisfying the conclusion of Theorem~\ref{t:triExamples} implies that this argument must fail if both $p_1$ and $p_2$ are even; let us briefly describe why this must be the case.} If both $p_1, p_2$ are even, there will be three eigenvalues at $-2$ with vanishing gradients, corresponding to all three solutions $(0,\pi)$, $(\pi, 0)$, $(\pi, \pi)$.
Trying to control the second order perturbations of all these three eigenvalues along $(\beta_1, \beta_2)$ is equivalent to controlling the signs of the following three expressions simultaneously
\begin{align*}
-\beta_{2}\Big(\frac{\beta_{1}}{p_1}-\frac{\beta_{2}}{p_2}\Big),\ \ 
\beta_{1}\Big(\frac{\beta_{1}}{p_1}-\frac{\beta_{2}}{p_2}\Big),\ \ \text{and}\ \ 
\beta_1\beta_2.
\end{align*}
A simple inspection of these three expressions yields that two of them are always non-negative with the other one being non-positive. 
Therefore we can never choose two different directions that lead to un-even {eigenvalue counts}.
This explains why at least one of the periods must be odd for our argument to work. 
\end{remark}

\begin{proof}[Proof of Theorem~\ref{thm:trimain}.2]
Now let us give a detailed proof. Without loss of generality, assume $p_1$ is odd, let $E = -2$, and assume for the sake of contradiction that $E = \max F_k = \min F_{k+1}$ for some $k$.
We choose $\widetilde\bthe$ and $\bm{\ell}^{(1)}$ via
\begin{align*}
\tthe_1=0,\ {\ell_1^{(1)}} = 0,\ \ \ 
(\tthe_2, {\ell_2^{(1)}})=
\begin{cases}
\Big(0, \frac{p_2}{2}\Big),\ \ \text{if } p_2 \text{ is even},\\
\Big(\pi, \frac{p_2-1}{2}\Big),\ \ \text{if } p_2 \text{ is odd}.
\end{cases}
\end{align*}

With these choices of $\bm{\ell}^{(1)}$ and $\widetilde{\bthe}$, one can check that $e_{{\bm{\ell}^{(1)}}}(\widetilde\bthe) = -2 = E$. As before, let $r$ denote the multiplicity of $E$ and let $\Lambda_E(\tbthe)$ denote the set of $\bm{\ell} \in \Lambda$ with $e_{\bm{\ell}}(\widetilde\bthe) = -2$.
Note that we also have $\nabla e_{{\bm{\ell}^{(1)}}}(\widetilde\bthe) = \bm{0}$, and thus $\mathcal{J}_0\neq \emptyset$. Moreover, we claim that $\CJ_0 = \{\bm{\ell}^{(1)}\}$. To see this, suppose there exists $\bm{\ell} \neq \bm{\ell}^{(1)}$ in $\CJ_0$. In view of Lemma~\ref{lem:triJ0empty}, we must have
\[\frac{\tthe_1+2\pi\ell_1}{p_1}=\pi,\]
which implies $p_1 = 2\ell_1$, which is impossible, since $p_1$ is odd.
Consequently,
\[\mathcal{J}_0=
{\{ \bm{\ell}^{(1)} \}}.\]

Let us choose $\bbe_1=(\beta_{1,1},\beta_{1,2})=(0,1)$ and a unit vector
\[\bbe_2=(\beta_{2,1},\beta_{2,2}) \sim (2p_1, p_2)/\sqrt{4p_1^2+p_2^2}\] 
such that 
\begin{align}\label{eq:beta2triodd1}
\beta_{2,2}\Big(\frac{\beta_{2,1}}{p_1}-\frac{\beta_{2,2}}{p_2}\Big)>0,
\end{align}
and
\begin{align}\label{eq:beta2triodd2}
\bbe_2\cdot \nabla e_{{\bm{\ell}}}(\tbthe)\neq 0\ \ \text{holds for any }\ \ell \in \Lambda_E(\tbthe) \setminus \{\bm{\ell}^{(1)}\}.
\end{align}

We will use \eqref{eq:beta2triodd1} to control the perturbation of $e_{{\bm{\ell}^{(1)}}}(\tbthe)$ along the $\bbe_2$ direction. 
We also note that \eqref{eq:beta2triodd2} simply says 
\begin{align}\label{eq:beta20triodd}
\mathcal{J}_{\bbe_2}^0=\mathcal{J}_{0}
=
\{{\bm{\ell}^{(1)}}\}.
\end{align}

\subsubsection*{Perturbation along $\bbe_2$}
We first perturb the eigenvalues along the $\bbe_2$ direction.\

By \eqref{eq:beta20triodd}, we need only consider first-order perturbation theory as in \eqref{eq:pertgeneralbetatri1order} for $\bm{\ell} \in \Lambda_E(\tbthe) \setminus \{\bm{\ell}^{(1)}\}$.
Since ${\bm{\ell}^{(1)}} \in \mathcal{J}_{0}$, we need to employ \eqref{eq:pertgeneralbetatri2order} for $e_{{\bm{\ell}^{(1)}}}$.
Indeed, by \eqref{eq:pertgeneralbetatri2order}, we have for $|t|>0$ small enough,
\begin{align}\label{eq:remtri}
e_{{\bm{\ell}^{(1)}}}(\tbthe + t\bbe_2)
& =
e_{{\bm{\ell}^{(1)}}}(\tbthe)
-\frac{t^2}{2}
\Bigg[\frac{\beta_{2,1}^2}{p_1^2}-\frac{\beta_{2,2}^2}{p_2^2}-\Big(\frac{\beta_{2,1}}{p_1}-\frac{\beta_{2,2}}{p_2}\Big)^2\Bigg]+O(t^3) \notag\\
& =-2-\frac{\beta_{2,2}}{p_2}\Big(\frac{\beta_{2,1}}{p_1}-\frac{\beta_{2,2}}{p_2}\Big)t^2+O(t^3)\\
& <-2 \notag\\
& = \min F_{k+1}, \notag
\end{align}
where we used \eqref{eq:beta2triodd1} in the last inequality.

For $t > 0$ small enough, we then have the following.
\begin{itemize}
\item If ${\bm{\ell}} \in \mathcal{J}_{\bbe_2}^+$, we have
\[
E_{k+r-s+1}(\tbthe + t\bbe_2)
>
e_{{\bm{\ell}}}(\tbthe + t\bbe_2)
>
E
=
\max F_k,
\]
which implies 
\begin{align}\label{eq1:Jbeta2+triodd}
|\mathcal{J}_{\bbe_2}^+|\leq r-s.
\end{align}

\item If ${\bm{\ell}} \in \mathcal{J}_{\bbe_2}^-\bigcup \mathcal{J}_0$, we have 
\[
E_{k-s}(\tbthe + t\bbe_2)
<
e_{{\bm{\ell}}}(\tbthe + t\bbe_2)
<
E
=
\min F_{k+1},
\]
which implies
\begin{align}\label{eq2:Jbeta2+triodd}
|\mathcal{J}_{\bbe_2}^-|+|\mathcal{J}_0|\leq s.
\end{align}
\end{itemize}
Taking \eqref{eq:sumJbetatri}, {\eqref{eq:beta20triodd},} \eqref{eq1:Jbeta2+triodd}, and \eqref{eq2:Jbeta2+triodd} into account, we have
\begin{align}\label{eq3:Jbeta2+triodd}
{|\mathcal{J}_{\bbe_2}^-|=s-1}.
\end{align}
{Replacing $\bbe_2$ by $-\bbe_2$ as in previous phases of the argument, we arrive at
\begin{equation} \label{eq3:Jbeta2-triodd}
|\CJ_{\bbe_2}^+|
=
s-1.
\end{equation}}

Combining \eqref{eq3:Jbeta2+triodd} with \eqref{eq3:Jbeta2-triodd}, we arrive at
\begin{align}\label{eq4:Jbeta2triodd}
{r
= |\CJ_{\bbe_2}^+| + |\CJ_{\bbe_2}^-| + |\CJ_0|
=
2s-1.}
\end{align}

\subsubsection*{Perturbation along $\bbe_1$}
Now we perturb the eigenvalues along $\bbe_1 = (0,1)$.
The case when ${\bm{\ell}} \in \mathcal{J}_{\bbe_1}^{\pm}$ is similar to that of $\bbe_2$.
The difference here is the {behavior of} perturbations of $e_{{\bm{\ell}^{(1)}}}$ {in the direction $\bbe_1$.}
Indeed, by \eqref{eq:pertgeneralbetatri2order}, we have
\begin{align*}
e_{{\bm{\ell}^{(1)}}}(\tbthe + t\bbe_1)
&=e_{{\bm{\ell}^{(1)}}}(\tbthe)
-\frac{t^2}{2}
\Bigg[\frac{\beta_{1,1}^2}{p_1^2}-\frac{\beta_{1,2}^2}{p_2^2}-\Big(\frac{\beta_{1,1}}{p_1}-\frac{\beta_{1,2}}{p_2}\Big)^2\Bigg]+O(t^3)\\
&=-2+\frac{t^2}{p_2^2}+O(t^3)\\
&>-2=\max F_{k}.
\end{align*}
Thus, the perturbations of $e_{{\bm{\ell}^{(1)}}}$ {in the direction $\bbe_1$} always move up.

For $t>0$ small enough,
\begin{itemize}
\item If ${\bm{\ell}} \in \mathcal{J}_{\bbe_1}^+\bigcup \mathcal{J}_0$, we have
\[
E_{k+r-s+1}(\tbthe + t\bbe_1)
>
e_{{\bm{\ell}}}(\tbthe + t\bbe_1)
>
E
=
\max F_k,
\]
which implies 
\begin{align}\label{eq1:Jbeta1+triodd}
|\mathcal{J}_{\bbe_1}^+|+|\mathcal{J}_0| \leq r-s.
\end{align}
\item If ${\bm{\ell}} \in \mathcal{J}_{\bbe_1}^-$, we have 
\[
E_{k-s-1}(\tbthe + t\bbe_1)
<
e_{{\bm{\ell}}}(\tbthe + t\bbe_1)
<
E
=
\min F_{k+1},
\]
which implies
\begin{align}\label{eq2:Jbeta1+triodd}
|\mathcal{J}_{\bbe_1}^-| \leq s.
\end{align}
\end{itemize}
In view of \eqref{eq:sumJbetatri}, Equations~\eqref{eq1:Jbeta1+triodd} and \eqref{eq2:Jbeta1+triodd} yield
\begin{align}\label{eq3:Jbeta1+triodd}
{|\mathcal{J}_{\bbe_1}^-|
=
s.}
\end{align}
{Applying the usual symmetry argument, we also arrive at $|\CJ_{\bbe_1}^+| = s$, which leads to
\[
r
=
|\CJ_{\bbe_1}^+| + |\CJ_{\bbe_1}^-| + |\CJ_0|
=
2s+1,
\]
which in turn contradicts \eqref{eq4:Jbeta2triodd}.}
\end{proof}

\subsection{Proof of Lemmas \ref{lem:constructiontri} and \ref{lem:triJ0empty}}\label{sec:constructionlemmaproof}
\begin{proof}[Proof of Lemma~\ref{lem:constructiontri}]
Let {$E \in [-3,6]$ be given}, let $x$ be as-yet-unspecified, set $y = 2\pi - x$, and note that \eqref{eq:xyCondB} holds. Then, using $y = 2\pi - x$, we note that
\begin{align*}
\cos(x) + \cos(y) + \cos(x-y)
& =
2\cos(x) + \cos(2x) \\
& =
2\cos(x) + 2\cos^2(x) - 1.
\end{align*}
Setting $z = \cos(x)$, we seek to solve $2z+2z^2-1 = E/2$, which gives
\[
z^2 + z - \frac{1}{2} - \frac{E}{4} = 0
\implies
z = 
\frac{-1 \pm \sqrt{3+E}}{2}.
\]
Thus, we may take $x$ so that
\[
\cos(x)
=
\frac{-1+\sqrt{3+E}}{2}.
\]
In fact, since $-3 \le E \le 6$, we may take $0 \le x \le 2\pi/3$. Thus, with this choice of $x$ (and $y = 2\pi - x$), we get \eqref{eq:xyCondA}. 

Finally, suppose $x$ and $y$ solve \eqref{eq:xyCondA} and \eqref{eq:xyCondB} for $E \neq -2$. From \eqref{eq:xyCondB}, we deduce that either $x+y = 2\pi$ or $|x-y|=\pi$. The second option leads to $E = -2$, so we must have $y=2\pi - x$. Then, $E \neq -2$ guarantees
\[
\cos(x) + \cos(2\pi - x)
=
2\cos(x)
=
-1 + \sqrt{3+E}
\neq 0,
\]
which proves \eqref{eq:xyCondC}.
\end{proof}

\begin{proof}[Proof of Lemma~\ref{lem:triJ0empty}]
Suppose that $x$ and $y$ solve
\begin{align}
\label{eq:triJ0empty:cossum}
\cos(x) + \cos(y) + \cos(x-y)
& =
\lambda \\
\label{eq:triJ0empty:sin1}
\sin(x) + \sin(x-y) & = 0 \\
\label{eq:triJ0empty:sin2}
\sin(y) - \sin(x-y) & = 0
\end{align}
for some $\lambda \in (-3/2,3)$. Adding \eqref{eq:triJ0empty:sin1} and \eqref{eq:triJ0empty:sin2}, we arrive at 
\[
\sin(x) = - \sin(y).
\]
For $(x,y) \in [0,2\pi)^2$, this forces either $|x-y| = \pi$ or  $x+y = 2\pi$. In the case $|x-y| = \pi$, substituting in to \eqref{eq:triJ0empty:sin1} and \eqref{eq:triJ0empty:sin2} gives $\sin(x) = \sin(y) = 0$, forcing $x,y \in \{0,\pi\}$. Plugging the various possibilities into \eqref{eq:triJ0empty:cossum}, one either gets $\lambda = 3 \notin (-3/2,3)$ (when $x=y=0$) or $\lambda = -1$ (when at least one of $x$ or $y$ is $\pi$).

Alternatively, if $x = 2\pi - y$, \eqref{eq:triJ0empty:sin1} yields $\sin(x) + \sin(2x) = 0$,  which leads to
\[
\sin(x)(1 + 2\cos(x))
=
0.
\]
Setting $\sin(x) = 0$ yields $x \in \{0,\pi\}$ which leads to the same solutions as before. Setting $1 +2\cos(x) = 0$ yields $(x,y) = (2\pi/3,4\pi/3)$ or $(x,y) = (4\pi/3, 2\pi/3)$. Plugging in either possibility into \eqref{eq:triJ0empty:cossum} yields
\[
\cos(x) + \cos(y) + \cos(x-y)
=
-\frac{3}{2} \notin (-3/2,3),
\]
as claimed.
\end{proof}

\subsection{\boldmath Opening a Gap at $-2$}
Let us exhibit a $(2,2)$-periodic potential that perturbatively opens a gap at energy $E = -2$ for the triangular lattice.

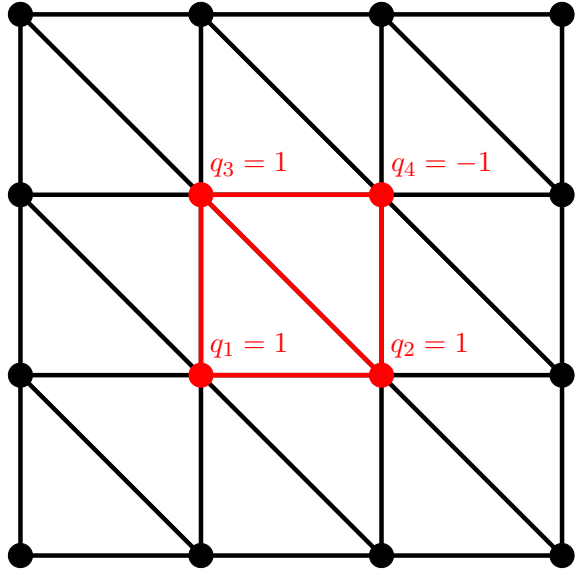
\begin{figure*}[t]

\begin{tikzpicture}[yscale=.8,xscale=.8]
\draw [-,line width = .06cm] (0,0) -- (9,0);
\draw [-,line width = .06cm] (0,0) -- (0,9);
\draw [-,line width=.06cm] (0,9) -- (9,9);
\draw [-,line width = .06cm] (9,0) -- (9,9);
\draw [-,line width = .06cm] (0,9) -- (9,0);
\draw [-,line width = .06cm] (3,0) -- (3,9);
\draw [-,line width = .06cm] (6,0) -- (6,9);
\draw [-,line width = .06cm] (0,3) -- (9,3);
\draw [-,line width = .06cm] (0,6) -- (9,6);
\draw [-,line width = .06cm] (0,3) -- (3,0);
\draw [-,line width = .06cm] (0,6) -- (6,0);
\draw [-,line width = .06cm] (3,9) -- (9,3);
\draw [-,line width = .06cm] (6,9) -- (9,6);
\filldraw[color=black, fill=black](0,0) circle (.2);
\filldraw[color=black, fill=black](0,3) circle (.2);
\filldraw[color=black, fill=black](0,6) circle (.2);
\filldraw[color=black, fill=black](0,9) circle (.2);
\filldraw[color=black, fill=black](3,0) circle (.2);
\filldraw[color=red, fill=red](3,3) circle (.2);
\filldraw[color=red, fill=red](3,6) circle (.2);
\filldraw[color=black, fill=black](3,9) circle (.2);
\filldraw[color=black, fill=black](6,0) circle (.2);
\filldraw[color=red, fill=red](6,3) circle (.2);
\filldraw[color=red, fill=red](6,6) circle (.2);
\filldraw[color=black, fill=black](6,9) circle (.2);
\filldraw[color=black, fill=black](9,0) circle (.2);
\filldraw[color=black, fill=black](9,3) circle (.2);
\filldraw[color=black, fill=black](9,6) circle (.2);
\filldraw[color=black, fill=black](9,9) circle (.2);
\draw [-,line width = .06cm,color=red] (3,3) -- (3,6);
\draw [-,line width = .06cm,color=red] (3,3) -- (6,3);
\draw [-,line width = .06cm,color=red] (6,3) -- (6,6);
\draw [-,line width = .06cm,color=red] (3,6) -- (6,6);
\draw [-,line width = .06cm,color=red] (6,3) -- (3,6);
\node  at (3.8,3.5) {$\hot{q_1=1}$};
\node  at (6.8,3.5) {$\hot{q_2=1}$};
\node  at (3.8,6.5) {$\hot{q_3=1}$};
\node  at (7,6.5) {$\hot{q_4=-1}$};
\end{tikzpicture}
\caption{A $(2,2)$ periodic potential on the triangular lattice with a gap at $E = -2$ for all positive coupling constants.}\label{fig:tri2x2fundDomain}
\end{figure*}

\begin{theorem} \label{thm:triExampleGapLength}
Define
\[
Q_{n,m} 
= 
(-1)^{mn}
=
\begin{cases}
1 & \text{ if } m \text{ or } n \text{ is even}, \\
-1 & \text{ if both } m \text{ and } n \text{ are odd,}
\end{cases}
\]
and denote $H_\lambda = \Delta_\tri + \lambda Q$. For all $\lambda > 0$, $\sigma(H_\lambda)$ has two connected components. Moreover, for all $\lambda > 0$ sufficiently small, the gap that opens about $E=-2$ is precisely equal to
\[
\mathfrak{g}_\lambda
=
\left(-\sqrt{4+\lambda^2},-2+\lambda \right).
\]
In particular,
\[
|\mathfrak{g}_\lambda|
=
\lambda + \left( \sqrt{4+\lambda^2}-2 \right)
\sim
\lambda + \frac{\lambda^2}{2},
\]
so the gap opens linearly as $\lambda \downarrow 0$.
\end{theorem}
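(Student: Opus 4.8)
The plan is to pass to the Floquet picture of Section~\ref{sec:floquet} with $p_1=p_2=2$, so that $H_\lambda$ is unitarily equivalent to the direct integral of the $4\times4$ matrices $H_\lambda(\bthe)$, $\bthe\in\T^2$, on the fundamental cell $\{(0,0),(1,0),(0,1),(1,1)\}$. Writing $z_j=e^{i\theta_j}$ and ordering the cell as above, one reads off $H_\lambda(\bthe)$ from \eqref{eq:triLaplacianSqVersion}; the crucial structural point is that $\lambda Q$ contributes $\mathrm{diag}(\lambda,\lambda,\lambda,-\lambda)=\lambda I-2\lambda\,e_4e_4^{*}$, where $e_4$ is the basis vector at the single site $(1,1)$ carrying potential $-\lambda$. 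Hence
\[
H_\lambda(\bthe)=\Delta_\tri(\bthe)+\lambda I-2\lambda\,e_4e_4^{*},
\]
a negative rank-one perturbation of a constant shift of the bare Floquet matrix $\Delta_\tri(\bthe)$, whose eigenvalues are exactly the four numbers $e_{\bm{\ell}}(\bthe)$, $\bm{\ell}\in\{0,1\}^2$, from Section~\ref{sec:tri}.

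First I would locate the candidate band edges by diagonalizing at two high-symmetry parameters. At $\bthe=(0,0)$ all off-diagonal entries equal $2$, and the symmetry under permuting the first three coordinates splits the matrix into a symmetric and an antisymmetric $2$-dimensional block, giving eigenvalues $2\pm\sqrt{\lambda^2+4\lambda+16}$ and $\lambda-2$ (multiplicity two). At $\bthe=(\pi,\pi)$ the matrix decouples into the blocks $\{(0,0),(1,1)\}$ and $\{(1,0),(0,1)\}$, giving $\pm\sqrt{\lambda^2+4}$ and $\lambda\pm2$. Thus $-2+\lambda$ occurs (doubly at $(0,0)$, and again at $(\pi,\pi)$) and $-\sqrt{4+\lambda^2}$ occurs at $(\pi,\pi)$, so both proposed endpoints of $\mathfrak{g}_\lambda$ lie in $\sigma(H_\lambda)$; it remains to show the open interval between them is gap-free.

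The heart of the matter is the pair of uniform bounds $E_1(\bthe)\le-\sqrt{4+\lambda^2}$ and $E_2(\bthe)\ge-2+\lambda$ for all $\bthe$, since these force $\max F_1=-\sqrt{4+\lambda^2}$ and $\min F_2=-2+\lambda$, and then (as $E_3,E_4\ge E_2$ pointwise) no eigenvalue lies in $(-\sqrt{4+\lambda^2},-2+\lambda)$. The key elementary input is, with $f(a,b)=2\cos a+2\cos b+2\cos(a-b)$, the identities
\[
f(a,b)+f(a+\pi,b)=4\cos b,\quad f(a,b)+f(a,b+\pi)=4\cos a,\quad f(a,b)+f(a+\pi,b+\pi)=4\cos(a-b).
\]
Read on the four folds $e_{\bm{\ell}}(\bthe)$, any two of them are paired in exactly one identity, and each right-hand side is $\ge-4$, so no two folds can both be $<-2$; hence \emph{at most one} $e_{\bm{\ell}}(\bthe)$ is $<-2$, i.e.\ $\Delta_\tri(\bthe)+2$ has at most one negative eigenvalue. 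Since $-2\lambda\,e_4e_4^{*}$ has rank one, interlacing then controls the eigenvalue counts on either side of the gap: the bound $E_2(\bthe)\ge-2+\lambda$ amounts to showing $\Delta_\tri(\bthe)+2-2\lambda e_4e_4^{*}$ has at most one negative eigenvalue, and $E_1(\bthe)\le-\sqrt{4+\lambda^2}$ to showing $\det\!\big(H_\lambda(\bthe)+\sqrt{4+\lambda^2}\,I\big)\le0$ (via the rank-one secular determinant, or a min--max test vector built from the lowest bare mode together with $e_4$).

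The main obstacle is precisely the neighborhood of $\bthe=(\pi,\pi)$, where two bare bands touch at $-2$, so $\Delta_\tri(\bthe)+2$ has \emph{two} eigenvalues tending to $0$ and the rank-one shift could a priori drive a second eigenvalue across the gap. This is where smallness of $\lambda$ enters and where the local Hessian analysis behind Theorem~\ref{thm:trimain} is needed: for the direction singled out by $e_4$ the two colliding modes split with the correct signs, so exactly one stays below and one at/above the edge, matching the explicit values at $(\pi,\pi)$; away from this locus the second-lowest eigenvalue of $\Delta_\tri(\bthe)+2$ is bounded below by a positive constant, so for small $\lambda$ no spectrum enters the gap. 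Granting the two bounds, $\mathfrak{g}_\lambda=(-\sqrt{4+\lambda^2},-2+\lambda)$ exactly, and $|\mathfrak{g}_\lambda|=\lambda+(\sqrt{4+\lambda^2}-2)$ with linear leading order by a routine Taylor expansion. Finally, the two-component statement for \emph{all} $\lambda>0$ I would get from the same counting scheme applied to one splitting energy $E^{*}(\lambda)$ below which $H_\lambda(\bthe)$ has exactly one eigenvalue for every $\bthe$ (persistence of the gap between $F_1$ and $F_2$), together with pairwise overlap of $F_2,F_3,F_4$ so that the upper part remains connected.
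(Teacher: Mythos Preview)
Your structural observations are correct and elegant: $\lambda Q=\lambda I-2\lambda\,e_4e_4^{*}$ is indeed a rank-one perturbation of a shift, and the pairwise-sum identities $f(a,b)+f(a+\pi,b)=4\cos b$ (etc.) do show that at most one bare fold $e_{\bm{\ell}}(\bthe)$ lies strictly below $-2$. You also correctly locate the candidate edges at $\bthe=(0,0)$ and $\bthe=(\pi,\pi)$. This route is genuinely different from the paper's and the rank-one viewpoint is attractive.

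However, there is a real gap at the crux. Interlacing yields only $E_2(\bthe)\ge e_{(1)}(\bthe)+\lambda$, which is too weak whenever the lowest fold dips below $-2$; that happens on an open region (for instance near $\bthe=(2\pi/3,4\pi/3)$, where the folds are $-3,1,1,1$), not only near $(\pi,\pi)$. Your proposed fix---``the local Hessian analysis behind Theorem~\ref{thm:trimain}''---does not apply: that argument rules out band-touchings of the \emph{free} operator away from $E=-2$, whereas here you need a quantitative lower bound on $E_2$ for the \emph{perturbed} operator \emph{at} the exceptional energy, uniformly in $\bthe$. What is actually needed (if one pursues the rank-one route) is a statement such as $\sum_{\bm{\ell}}(e_{\bm{\ell}}(\bthe)+2)^{-1}\le 0$ whenever $e_{(1)}(\bthe)<-2$, so that the secular root in $(e_{(1)},e_{(2)})$ lies at or above $-2$; this is nontrivial and is neither stated nor proved. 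The left-edge bound $E_1(\bthe)\le-\sqrt{4+\lambda^2}$ has the analogous missing step. You close with ``Granting the two bounds\ldots'', which is precisely the content of the theorem. Your sketch for the all-$\lambda$ two-component claim is also circular, since it assumes persistence of the $F_1$--$F_2$ gap.

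The paper proceeds differently and self-containedly: it writes out $p(\bthe,\lambda,\varepsilon)=\det\!\big(H_\lambda(\bthe)-(-2+\varepsilon)\bbI\big)$ explicitly and shows $p<0$ throughout the claimed gap. For $0\le\varepsilon<\lambda$ this reduces, using only $X(\bthe)\le0$, to the factorization $W_1(\lambda,\varepsilon)=(\lambda-\varepsilon)^2(\varepsilon^2-8\varepsilon-\lambda^2-4\lambda)$. For $2-\sqrt{4+\lambda^2}<\varepsilon\le0$ one first proves the nontrivial trigonometric inequality of Lemma~\ref{lem:triGapLength:trigPolyEst} (which is exactly the substitute for your missing secular/Hessian analysis) and then obtains the sharp factorization $W_2(\lambda,\varepsilon)=(\varepsilon-\lambda)(\varepsilon-\lambda-4)(\varepsilon^2-4\varepsilon-\lambda^2)$, whose root at $\varepsilon=2-\sqrt{4+\lambda^2}$ pins down the left edge. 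The ``$-2\notin\sigma(H_\lambda)$ for all $\lambda>0$'' statement drops out immediately from $p(\bthe,\lambda,0)\le-\lambda^4-4\lambda^3<0$.
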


The following lemma will be used:

\begin{lemma} \label{lem:triGapLength:trigPolyEst}
For all $\bthe \in \T^2$ and all $0 \leq a \leq 54$,
\[
4(\sin\theta_1 + \sin\theta_2 - \sin(\theta_1+\theta_2))^2
+ a(1+\cos\theta_1+\cos\theta_2+\cos(\theta_1+\theta_2))
\geq 0.
\]
\end{lemma}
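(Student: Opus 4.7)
Since the quantity is affine in $a$ and the $a=0$ case is trivial (a square plus zero is nonnegative), the linear function $a \mapsto \text{LHS}(\bthe;a)$ is nonnegative at both endpoints of $[0,54]$ as soon as it is nonnegative at $a=54$; hence it suffices to establish the extremal case $a=54$.

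The plan is to reduce the inequality, via standard half-angle identities, to a polynomial inequality in two bounded real variables, and then to analyse that polynomial as a quadratic in one of the variables. Applying
\begin{align*}
\sin\theta_1+\sin\theta_2-\sin(\theta_1+\theta_2)&=4\sin(\theta_1/2)\sin(\theta_2/2)\sin((\theta_1+\theta_2)/2),\\
1+\cos\theta_1+\cos\theta_2+\cos(\theta_1+\theta_2)&=4\cos(\theta_1/2)\cos(\theta_2/2)\cos((\theta_1+\theta_2)/2),
\end{align*}
and then the change of variables $\xi=(\theta_1+\theta_2)/2$, $\eta=(\theta_1-\theta_2)/2$ (so that $2\sin(\theta_1/2)\sin(\theta_2/2)=\cos\eta-\cos\xi$ and $2\cos(\theta_1/2)\cos(\theta_2/2)=\cos\eta+\cos\xi$), the quantity at $a=54$ rewrites as
\[
16\sin^2\xi\,(\cos\eta-\cos\xi)^2+108\cos\xi\,(\cos\eta+\cos\xi).
\]
Setting $u=\cos\eta$ and $v=\cos\xi$, both in $[-1,1]$, the goal becomes
\[
F(u,v):=16(1-v^2)(u-v)^2+108\,v(u+v)\ \geq\ 0\qquad\text{on}\ [-1,1]^2.
\]

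The main step is to treat $F$ as a quadratic in $u$, namely $F(u,v)=16(1-v^2)u^2+4v(19+8v^2)u+4v^2(31-4v^2)$, which has nonnegative leading coefficient. I would first record the explicit factorisations
\[
F(1,v)=4(2v+1)^2(4-v)(1+v),\qquad F(-1,v)=4(1-2v)^2(4+v)(1-v),
\]
both manifestly nonnegative on $[-1,1]$. Completing the square yields the unconstrained minimum
\[
F(u^*(v),v)=\frac{27\,v^2(5-32v^2)}{4(1-v^2)},\qquad u^*(v)=-\frac{v(19+8v^2)}{8(1-v^2)}.
\]
If $v^2\leq 5/32$, then $F(u^*,v)\geq 0$, so $F(\cdot,v)\geq 0$ everywhere. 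If $v^2>5/32$, then $F(u^*,v)<0$, and one must argue that the vertex $u^*(v)$ lies outside $[-1,1]$, so that the minimum of $F(\cdot,v)$ on $[-1,1]$ is attained at an endpoint and hence is nonnegative by the boundary factorisations.

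The condition $|u^*(v)|>1$ reduces, after clearing denominators, to $g(t):=8t^3+8t^2+19t-8>0$ with $t=|v|$. Since $g'(t)=24t^2+16t+19>0$, the function $g$ is strictly increasing; at the threshold $t_0=\sqrt{5/32}$ one directly computes $g(t_0)=27(3\sqrt{5/32}-1)/4$, which is positive because $\sqrt{5/32}>1/3$ (equivalent to $45>32$). So $g>0$ on $[t_0,1]$, closing the case split. The degenerate edge case $v=\pm 1$ is dispatched separately by observing $F(u,\pm 1)=108(1\pm u)\geq 0$ on $[-1,1]$. The main obstacle is the sharp numerical coincidence that the unconstrained minimum becomes negative on exactly the range of $v$ for which the vertex exits $[-1,1]$; verifying this pivots on the explicit factorisations of $F(\pm1,v)$ and on the evaluation $g(t_0)=27(3t_0-1)/4$, and these are the ingredients that ultimately pin down the sharp constant $54$.
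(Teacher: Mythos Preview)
Your proof is correct and follows a genuinely different route from the paper. Both arguments first reduce to $a=54$ via affinity in $a$ (the paper phrases this as a boundary/interior analysis of $g$ on $\T^2\times[0,54]$, but the content is the same). From there, the paper computes $\nabla h$ directly with trigonometric identities, splits the system $\nabla h=0$ into four cases, solves each trigonometric subsystem for its critical points, and evaluates $h$ at the resulting finite list. Your approach instead passes through the half-angle substitution $(u,v)=(\cos\eta,\cos\xi)$ to reduce to the polynomial inequality $F(u,v)\geq 0$ on $[-1,1]^2$ and analyses $F$ as an upward-opening quadratic in $u$, handling the vertex-in-box versus vertex-out-of-box dichotomy with the explicit boundary factorisations $F(\pm1,v)=4(1\mp 2v)^2(4\pm v)(1\pm v)$ and the monotone auxiliary cubic $g(t)=8t^3+8t^2+19t-8$. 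The paper's method is a straightforward calculus computation but requires tracking a dozen critical points; your method trades that case analysis for a single sharp numerical pivot (the evaluation $g(\sqrt{5/32})=\tfrac{27}{4}(3\sqrt{5/32}-1)>0$), and in doing so makes the optimality of the constant $54$ more transparent, since the boundary factorisations vanish precisely at $v=\mp 1/2$, corresponding to the paper's degenerate critical points $(\pi,0),(0,\pi),(\pi,\pi)$.
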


\begin{proof}
Define
\[
g(\theta_1,\theta_2,a)
=
4(\sin\theta_1 + \sin\theta_2 - \sin(\theta_1+\theta_2))^2
+ a(1+\cos\theta_1+\cos\theta_2+\cos(\theta_1+\theta_2)).
\]
We begin by checking the boundary of $\T^2 \times [0,54]$. It is easy to see that $g \geq 0$ if $a = 0$. For $a = 54$, define $h(\bthe) = g(\bthe,54)$. Using the identities 
\begin{align*}
\sin x+\sin y-\sin(x+y)&=4\sin\left(\frac{x}{2}\right) \sin\left(\frac{y}{2}\right) \sin\left(\frac{x+y}{2}\right),\\
\cos x-\cos (x+y)&=2\sin\left(\frac{y}{2}\right) \sin\left(x+\frac{y}{2}\right)\\
\sin x+\sin (x+y)&=2\cos\left(\frac{y}{2}\right) \sin\left(x+\frac{y}{2}\right),
\end{align*}
we may simplify $\nabla h$ to get
\begin{align}\label{eq:A3}
\frac{\partial h}{\partial \theta_1}
& =
4\sin\left(\theta_1+\frac{\theta_2}{2}\right)
\Bigg[
16\sin\left(\frac{\theta_1}{2}\right) \sin^2\left(\frac{\theta_2}{2}\right) 
\sin\left(\frac{\theta_1+\theta_2}{2}\right)- 27 \cos\left(\frac{\theta_2}{2}\right)
\Bigg],\\
\label{eq:A4}
\frac{\partial h}{\partial \theta_2}
& =
4\sin\left(\theta_2+\frac{\theta_1}{2}\right)
\Bigg[
16\sin^2\left(\frac{\theta_1}{2}\right) \sin\left(\frac{\theta_2}{2}\right) 
\sin\left(\frac{\theta_1+\theta_2}{2}\right)- 27 \cos\left(\frac{\theta_1}{2}\right)
\Bigg].
\end{align}
Consequently, setting $\nabla h = 0$ leads to four cases. For notational convenience, define
\[
\alpha
=
\arcsin\sqrt[4]{\frac{27}{32}} .
\]
\noindent \textbf{Case 1.}
\[\sin\left(\theta_1+\frac{\theta_2}{2}\right)=\sin\left(\theta_2+\frac{\theta_1}{2}\right)=0.\]
This implies $\theta_1 + \frac{1}{2}\theta_2 \in \pi \Z$ and $\theta_2 + \frac{1}{2}\theta_1 \in \pi \Z$. Solving the resulting systems for solutions in $[0,2\pi)$ yields three points:
\[\bthe=(0,0),\ \left( \frac{2\pi}{3},\frac{2\pi}{3}\right),\ \left( \frac{4\pi}{3},\frac{4\pi}{3}\right).\]
\noindent \textbf{Case 2.}
\[\sin\left(\theta_1+\frac{\theta_2}{2}\right)=16\sin^2\left(\frac{\theta_1}{2}\right) \sin\left(\frac{\theta_2}{2}\right) 
\sin\left(\frac{\theta_1+\theta_2}{2}\right)- 27 \cos\left(\frac{\theta_1}{2}\right)=0.\]
As before, the first condition forces $\theta_1 + \frac{1}{2} \theta_2 \in \pi\Z$. Plugging the various possibilities that this yields into the second condition gives three solutions:
\begin{align*}
\bthe=(\pi, 0),\ \ (2\alpha,2\pi - 4\alpha),\ \ (2\pi-2\alpha,4\alpha). 
\end{align*}
\noindent \textbf{Case 3.}
\[\sin\left(\theta_2+\frac{\theta_1}{2}\right)=16\sin\left(\frac{\theta_1}{2}\right) \sin^2\left(\frac{\theta_2}{2}\right) 
\sin\left(\frac{\theta_1+\theta_2}{2}\right)- 27 \cos\left(\frac{\theta_2}{2}\right)=0.\]
Arguing as in Case~2, there are three solutions:
\begin{align*}
\bthe=(0,\pi),\ \ (2\pi - 4\alpha,2\alpha),\ \ (4\alpha,2\pi-2\alpha). 
\end{align*}
\noindent \textbf{Case 4.}
\begin{align}
\label{eq:triGapLength:trigPolyEst:Case4a}
16\sin^2\left(\frac{\theta_1}{2}\right) \sin\left(\frac{\theta_2}{2}\right) 
\sin\left(\frac{\theta_1+\theta_2}{2}\right)- 27 \cos\left(\frac{\theta_1}{2}\right)&=0\\
\label{eq:triGapLength:trigPolyEst:Case4b}
16\sin\left(\frac{\theta_1}{2}\right) \sin^2\left(\frac{\theta_2}{2}\right) 
\sin\left(\frac{\theta_1+\theta_2}{2}\right)- 27 \cos\left(\frac{\theta_2}{2}\right)&=0.
\end{align}
Multiply \eqref{eq:triGapLength:trigPolyEst:Case4a} by $\sin(\theta_2/2)$, multiply \eqref{eq:triGapLength:trigPolyEst:Case4b} by $\sin(\theta_1/2)$, and subtract the results to obtain
\[
\sin\left(\frac{\theta_1-\theta_2}{2}\right)
=
0.
\]
Using this, we see that the solutions are 
\begin{align*}
\bthe=(\pi, \pi),\ \ (2\alpha,2\alpha),\ \ (2\pi - 2\alpha,2\pi - 2\alpha) 
\end{align*}
Evaluating $g$ at these points, we find out $\max h(\bthe)=216$ attained at $(0,0)$, 
$\min h(\bthe)= 0$, attained at
\[
\bthe
=
\left( \frac{2\pi}{3},\frac{2\pi}{3}\right),
\;
\left( \frac{4\pi}{3},\frac{4\pi}{3}\right),
\; (\pi,0), \; (0,\pi), \; (\pi,\pi).
\]

Finally, we need to look at critical points of $g$  in the interior of $\T^2 \times [0,54]$. However, this is easy. Any zero of $\nabla g$ must in particular satisfy $\frac{\partial g}{\partial a} = 0$, which forces
\[
1+\cos\theta_1 + \cos\theta_2 + \cos(\theta_1+\theta_2)=0,
\]
which clearly implies $g \geq 0$.
\end{proof}

\begin{proof}[Proof of Theorem~\ref{thm:triExampleGapLength}]
For $\bthe = (\theta_1,\theta_2) \in \T^2$, denote by $H_\lambda(\bthe)$ the Floquet matrix corresponding to $H_\lambda$. Ordering the vertices of the $2\times 2$ fundamental domain as shown in Figure~\ref{fig:tri2x2fundDomain}, we obtain
\[
H_\lambda(\bthe) -(-2+\varepsilon) \bbI
=
\begin{bmatrix}
2+\lambda-\varepsilon & 1+ e^{-i\theta_1} & 1 + e^{-i\theta_2}& 1 + e^{-i(\theta_1+\theta_2)} \\
1 + e^{i\theta_1}& 2+\lambda -\varepsilon & e^{i\theta_1} + e^{-i\theta_2} & 1 + e^{-i\theta_2} \\
1 + e^{i\theta_2} & e^{-i\theta_1} + e^{i\theta_2} & 2+\lambda -\varepsilon & 1 + e^{-i\theta_1} \\
1 + e^{i(\theta_1+\theta_2)}& 1 + e^{i\theta_2} & 1 + e^{i\theta_1} & 2 - \lambda -\varepsilon
\end{bmatrix}
\]

For $\bthe \in \T^2$, $\lambda>0$, and $\varepsilon \in \R$, define
\[
p(\bthe,\lambda,\varepsilon)
=
\det(H_\lambda(\bthe) - (-2+\varepsilon) \bbI).
\]
After some calculations, one observes that
\begin{align*}
p(\bthe,\lambda,\varepsilon)
=
-\lambda^4 & - 4\lambda^3 + X(\bthe) - 4\varepsilon \lambda\left(3 - \frac{\lambda^2}{2}- \cos\theta_1 - \cos\theta_2 - \cos(\theta_1+\theta_2) \right) \\
 & + 4\varepsilon^2 (3+3\lambda - \cos\theta_1 - \cos\theta_2 - \cos(\theta_1+\theta_2)) \\
 & - 2\varepsilon^3(4 + \lambda) + \varepsilon^4,
\end{align*}
where 
\[
X(\bthe) = 
-4\big(\sin\theta_1+\sin\theta_2 - \sin(\theta_1+\theta_2)\big)^2
\]
Clearly $X(\bthe) \leq 0$ for all $\bthe$, so we have
\[
\det(H_\lambda(\bthe) + 2\bbI)
=
p(\bthe,\lambda,0)
\leq
-\lambda^4 - 4\lambda^3 <0
\]
for all $\lambda > 0$; consequently $-2 \notin \sigma(H_\lambda)$ for all $\lambda > 0$, which proves the first claim of the theorem. Introducing $W_1(\lambda,\varepsilon) := -\lambda^4 -4\lambda^3 + 2 \varepsilon\lambda^3 + 12\varepsilon^2\lambda - 2\varepsilon^3(4+\lambda) + \varepsilon^4$, we may rewrite $p$ as
\begin{align} \label{eq:triPrewrittenW1}
p(\bthe,\lambda,\varepsilon)
=
 X(\bthe) - 4\varepsilon(\lambda-\varepsilon)(3-\cos\theta_1-\cos\theta_2-\cos(\theta_1+\theta_2)) +W_1(\lambda,\varepsilon).
 \end{align}
By standard eigenvalue perturbation theory, we know that $|g_\lambda^\pm+2| \leq \lambda$, so we need only concern ourselves with $|\varepsilon| \leq \lambda$. Since $X(\bthe) \leq 0$ for all $\bthe$ and the second term of \eqref{eq:triPrewrittenW1} is nonpositive whenever $0 \leq \varepsilon \leq \lambda$, we arrive at
\[
p(\bthe,\lambda,\varepsilon)
\leq
-\lambda^4 -4\lambda^3 + 2 \varepsilon\lambda^3 + 12\varepsilon^2\lambda - 2\varepsilon^3(4+\lambda) + \varepsilon^4 
= 
 W_1(\lambda,\varepsilon)
\]
for all $\bthe \in \T^2$, all $\lambda > 0$, and all $0 \leq \varepsilon \leq \lambda$. Moreover, we observe that $p(\bm{0},\lambda,\varepsilon) = W_1(\lambda,\varepsilon)$, so this bound is sharp. Factoring $W_1$, we arrive at
\[
W_1(\lambda,\varepsilon)
=
(\lambda - \varepsilon)^2(\varepsilon^2 - 8\varepsilon - \lambda^2 - 4\lambda).
\]
Consequently, we see that $W_1(\lambda,\varepsilon) < 0$ for $\varepsilon \in [0,\lambda)$, which implies that $p(\bthe,\lambda,\varepsilon) < 0$ for all $\bthe \in \T^2$, all $\lambda > 0$, and all $0 \le \varepsilon < \lambda$; consequently, $[-2,-2+\lambda) \cap \sigma(H_\lambda) = \emptyset$, which is to say:
\begin{equation} \label{eq:triGapRightSide}
[-2,-2+\lambda) \subseteq \mathfrak{g}_\lambda.
\end{equation}
On the other hand, $p(\bm{0},\lambda,\lambda) = 0$, so 
\begin{equation} \label{eq:triGapRightSide-2+lambda}
-2+\lambda \in \sigma(H_\lambda(\bm{0})) \subseteq \sigma(H_\lambda)
\end{equation} 
Alternatively, $-2+\lambda \in \sigma(H_\lambda)$ is clear from eigenvalue perturbation theory as soon as one has $[-2,2+\lambda)\cap \sigma(H_\lambda) = \emptyset$.

Now, for $- \lambda \leq \varepsilon \leq 0$, we have to be more careful with the term
\[
q(\bthe,\lambda,\varepsilon)
:=
-4\varepsilon(\lambda - \varepsilon)(3- \cos\theta_1 - \cos\theta_2 - \cos(\theta_1+\theta_2)),
\]
as $q$ can be positive when $-\lambda < \varepsilon < 0$. Naively, one can bound
\[
3-\cos\theta_1-\cos\theta_2 - \cos(\theta_1+\theta_2)
\leq
\frac{9}{2},
\]
which leads to the upper bound of $X(\bthe) + q(\bthe,\lambda,\varepsilon) \leq -18\varepsilon(\lambda - \varepsilon)$. However, the maximum of $q$ occurs at the global \emph{minimum} of $X$, so we can do better. Indeed, for $\lambda > 0$ small and $-\lambda \leq \varepsilon \leq 0$, we have
\begin{equation} \label{eq:triGapRefinedXQbound}
X(\bthe) + q(\bthe,\lambda,\varepsilon) \leq - 16\varepsilon(\lambda-\varepsilon).
\end{equation}
In particular, by Lemma~\ref{lem:triGapLength:trigPolyEst}, the bound in \eqref{eq:triGapRefinedXQbound} holds for all $\varepsilon$ such that $-\lambda \leq \varepsilon\leq 0$ as long as $8 \lambda^2 < 54$, i.e.\ $0 < \lambda < \frac{3\sqrt{3}}{2}$. This then leads us to
\begin{align*}
p(\bthe,\lambda,\varepsilon)
\leq
W_2(\lambda,\varepsilon)
:&=
-\lambda^4 -4\lambda^3 + 2 \varepsilon\lambda^3 + 12\varepsilon^2\lambda - 2\varepsilon^3(4+\lambda) + \varepsilon^4 - 16\varepsilon(\lambda-\varepsilon) \\
& =
W_1(\lambda,\varepsilon) - 16\varepsilon(\lambda-\varepsilon)
\end{align*}
for $\lambda > 0$ small and $-\lambda \leq \varepsilon \leq 0$. Factoring $W_2$ yields
\[
p(\bthe,\lambda,\varepsilon)
\leq
W_2(\lambda,\varepsilon)
=
(\varepsilon - \lambda)(\varepsilon - \lambda- 4)(\varepsilon^2 - 4\varepsilon - \lambda^2)
\]
for $\lambda > 0$ small and $-\lambda \leq \varepsilon \leq 0$. It is straightforward to find the roots of $W_2$ and to observe that $W_2(\lambda,\varepsilon) < 0$ when
\[
2 - \sqrt{4+\lambda^2}
<
\varepsilon
\leq 
0.\]
As a result, this implies $p(\bthe,\lambda,\varepsilon) < 0$ for all $\bthe$, all $\lambda > 0$ small, and all $\varepsilon \in (2-\sqrt{4+\lambda^2},0]$, which in turn yields
\begin{equation} \label{eq:triGapLeftSide}
(-\sqrt{4+\lambda^2},-2] \subseteq \mathfrak{g}_\lambda.
\end{equation}
On the other hand, 
\[
p\left((\pi,\pi),\lambda,2 - \sqrt{4+\lambda^2}\right) 
=
 W_2\left(\lambda,2 - \sqrt{4+\lambda^2}\right)
=
0,
\] 
which leads us to conclude 
\begin{equation} \label{eq:triGapLeftSide-sqrt4+lambda2}
-\sqrt{4+\lambda^2} \in \sigma(H_\lambda(\pi,\pi)) \subseteq \sigma(H_\lambda).
\end{equation}
Putting together \eqref{eq:triGapRightSide}, \eqref{eq:triGapRightSide-2+lambda}, \eqref{eq:triGapLeftSide}, and \eqref{eq:triGapLeftSide-sqrt4+lambda2}, we obtain
\[
\mathfrak{g}_\lambda
=
\left(-\sqrt{4+\lambda^2},-2+\lambda \right)
\]
for small $\lambda$, as promised.

\end{proof}

The effort involved in proving Lemma~\ref{lem:triGapLength:trigPolyEst} in order to improve the constant ``18'' to ``16'' is nontrivial, but worthwhile. In particular, this is exactly what enables the exact factorization of $W_2$ and hence the ability to exactly compute the gap edges.

\section{Hexagonal Laplacian} \label{sec:hex}

We now continue with the Laplacian on the hexagonal lattice. Let $\Gamma_\hex = (\CV_\hex,\CE_\hex)$ and
\[
\bm{b}_\pm
=
\frac{1}{2} \begin{bmatrix}  3 \\ \pm \sqrt{3} \end{bmatrix}
\]
be as in the introduction, let periods $p_1,p_2 \in \Z_+$ be given, and view $H = \Delta_\hex$ as a $(p_1,p_2)$-periodic operator.
For this setting, there are two vertices of $\CV_\hex$ in $\set{s\bm{b}_+ + t \bm{b}_- : 0 \le s, t < 1}$, so our Floquet operator $H(\bthe)$ will be $P \times P$ with $P = 2p_1p_2$. As usual, define $\Lambda = \big( [0,p_1)\times[0,p_2)\big) \cap \Z^2$, denote the eigenvalues of $H(\bthe)$ by
\[
E_1^\Lambda(\bthe)
\leq
\cdots
\leq E_P^\Lambda(\bthe),
\]
and let $F_k^\Lambda$ for $1 \le k \le P$ denote the bands of the spectrum. Our main theorem in this section is the following result.

\begin{theorem} \label{thm:hexmain}
Let $p_1,p_2 \in \Z_+$ be given.
\begin{enumerate}
\item Every $E \in (-3,3) \setminus \set{-1,0,1}$ belongs to $\mathrm{int}(F_j)$ for some $1 \le j \le P$.
\item If at least one of $p_1$ or $p_2$ is odd, then $-1 \in \mathrm{int}(F_k)$ and $+1 \in \mathrm{int}(F_\ell)$ for some $1 \le k \le \ell \le P$
\end{enumerate}
\end{theorem}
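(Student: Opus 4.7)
My approach is to reduce Theorem~\ref{thm:hexmain} to Theorem~\ref{thm:trimain} by exploiting the bipartite block structure of $\Delta_\hex$. From \eqref{eq:hexDecomp}, $\Delta_\hex$ is off-diagonal with block $C := S_1 + S_2 + \bbI$, and a direct expansion (noting that $S_1 S_2^* = S_2^* S_1$) yields
\begin{equation*}
C^* C = C C^* = 3\bbI + (S_1 + S_1^*) + (S_2 + S_2^*) + (S_1^* S_2 + S_2^* S_1) = 3\bbI + \Delta_\tri,
\end{equation*}
where $\Delta_\tri$ is exactly the triangular Laplacian in the $(n,m)$-coordinates of \eqref{eq:triLaplacianSqVersion}. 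Consequently $\Delta_\hex^2 = (3\bbI + \Delta_\tri) \oplus (3\bbI + \Delta_\tri)$, and this identity descends verbatim to each Floquet fiber: for every $\bthe \in \T^2$,
\begin{equation*}
H_\hex(\bthe)^2 = \bigl(3\bbI_{p_1 p_2} + H_\tri(\bthe)\bigr) \oplus \bigl(3\bbI_{p_1 p_2} + H_\tri(\bthe)\bigr),
\end{equation*}
where $H_\tri(\bthe)$ denotes the triangular Floquet matrix at $\bthe$ with periods $(p_1, p_2)$, as in Section~\ref{sec:tri}.

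From this factorization, the $2 p_1 p_2$ eigenvalues of $H_\hex(\bthe)$ are precisely $\pm\sqrt{E_j^\tri(\bthe) + 3}$ for $j = 1, \ldots, p_1 p_2$, each inheriting the multiplicity of the underlying triangular eigenvalue. Sorting in increasing order produces the explicit band identification
\begin{equation*}
F_i^\hex = -\sqrt{F_{p_1 p_2 + 1 - i}^\tri + 3}, \qquad F_{p_1 p_2 + i}^\hex = +\sqrt{F_i^\tri + 3}, \qquad 1 \le i \le p_1 p_2.
\end{equation*}
Since $x \mapsto \sqrt{x + 3}$ is a smooth diffeomorphism on $(-3, \infty)$, an energy $E \ne 0$ lies in the interior of some $F_j^\hex$ if and only if $E^2 - 3$ lies in the interior of some $F_k^\tri$. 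For part (1), any $E \in (-3, 3) \setminus \{-1, 0, 1\}$ satisfies $E \ne 0$ and $E^2 - 3 \in (-3, 6) \setminus \{-2\}$, so Theorem~\ref{thm:trimain}.1 immediately places $E^2 - 3$ in the interior of a triangular band and hence $E$ in the interior of a hexagonal band. For part (2), $E = \pm 1$ gives $E^2 - 3 = -2$, and under the odd-period hypothesis Theorem~\ref{thm:trimain}.2 supplies the required triangular band, completing the argument via the same square-root correspondence.

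The only real obstacle is verifying the Floquet-level identity $H_\hex(\bthe)^2 = \bigl(3\bbI + H_\tri(\bthe)\bigr)^{\oplus 2}$ carefully: one must check that the cross-term $S_1^* S_2 + S_2^* S_1$ appearing in $C^* C$ reproduces precisely the skew-diagonal part of $\Delta_\tri$ used in Section~\ref{sec:tri}, and that the hexagonal Floquet parameter $\bthe$ feeds into $H_\tri(\bthe)$ with the \emph{same} $\bthe$ and the \emph{same} period vector $(p_1, p_2)$. Once this reconciliation is in hand, the remainder of the argument is soft: the exceptional hexagonal energies $E = \pm 1$ correspond under $E^2 - 3$ precisely to the exceptional triangular energy $-2$, while the exclusion of $E = 0$ in part (1) reflects the fact that $x = -3$, the bottom of $\sigma(\Delta_\tri)$, is the unique point at which the square-root map fails to be a local diffeomorphism.
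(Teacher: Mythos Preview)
Your proposal is correct and follows essentially the same route as the paper's own proof: both exploit the block off-diagonal form \eqref{eq:hexDecomp} to obtain $\Delta_\hex^2 = (3\bbI + \Delta_\tri)^{\oplus 2}$, pass this identity to the Floquet fibers, read off the band correspondence $F_{k,\hex} = \pm\sqrt{F_{j,\tri}+3}$, and then invoke Theorem~\ref{thm:trimain}. Your write-up is in fact slightly more explicit than the paper's in computing $C^*C$ and in flagging the need to check that the Floquet parameter and period vector carry over unchanged.
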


\begin{proof}[Proof of Theorem~\ref{t:bsc:hex}]
This follows immediately from Theorem~\ref{thm:hexmain}.
\end{proof}

\begin{proof}[Proof of Theorem~\ref{thm:hexmain}] 
From \eqref{eq:hexDecomp}, we have
\[
\Delta_\hex
=
\begin{bmatrix}
0 & S_1^* + S_2^* + \bbI \\ S_1 + S_2 + \bbI & 0 \end{bmatrix},
\]
where $S_j: \ell^2(\Z^2) \to \ell^2(\Z^2)$ denote the shifts
\[
[S_1 \psi]_{n,m} = \psi_{n+1,m},
\quad
[S_2 \psi]_{n,m} = \psi_{n,m+1}.
\]
It is easy to see that
\[
S_1 + S_1^* + S_2 + S_2^* + S_1S_2^* + S_1^*S_2 = \Delta_\tri,
\]
the triangular Laplacian. Thus, a simple calculation shows that
\begin{equation} \label{eq:hexSquareTri}
[\Delta_\hex^2 \Psi]_{\bm{n}}
\begin{bmatrix} [\Delta_\tri \psi^+]_{\bm{n}} + 3\psi_{\bm{n}}^+ \\ [\Delta_\tri \psi^-]_{\bm{n}} + 3\psi_{\bm{n}}^- \end{bmatrix}
\quad
\text{for } \Psi = \begin{bmatrix} \psi^+ \\ \psi^- \end{bmatrix} \in \ell^2(\Z^2,\C^2).
\end{equation}
This calculation extends to the Floquet matrices, so we see that for each $1 \le k \le P$, the bands of $H = \Delta_\hex$ obey
\[
F^\Lambda_{k,\hex}
=
-F^\Lambda_{P+1-k,\hex}
\]
and 
\begin{equation} \label{eq:hexTriBandRel}
F_{k,\hex}^\Lambda
=
\begin{cases}
\sqrt{F^\Lambda_{k - \frac{P}{2},\tri}+3} & \frac{P}{2} < k \leq P \\[3mm]
-\sqrt{F^\Lambda_{\frac{P}{2}+1-k,\tri}+3} & 1 \leq k \leq \frac{P}{2}
\end{cases}
\end{equation}
From this, we deduce that $E \in (-3,3)$ lies in the interior of some $F_{k,\hex}$ if and only if $E^2-3$ lies in the interior of some $F_{\ell,\tri}$. For $E \in (-3,3) \setminus \set{-1,0,1}$, $E^2-3 \in (-3,6) \setminus \{-2\}$, while $(\pm1)^2-3 = -2$. Thus, the conclusions of the theorem follow from Theorem~\ref{thm:trimain}.

\end{proof}

\subsection{\boldmath Opening gaps at $0$ and $\pm 1$}

Define the $(1,1)$-periodic potential $Q_1$ on $\CV_\hex$ by $Q_1(\bm{0}) = 1$ and $Q_1(\bm{a}_1) = -1$, that is,
\[
Q_1(n \bm{b}_+ + m \bm{b}_-) = 1,
\quad
Q_1(\bm{a}_1  + n \bm{b}_+ + m \bm{b}_-) = -1,
\quad n,m \in \Z.
\]
After identifying $\ell^2(\CV_\hex)$ with $\ell^2(\Z^2,\C^2)$ in the usual way, we get (as an operator) $[Q_1 \Psi]_{\bm{n}}= Z\Psi_{\bm{n}}$, where
\[
Z=\begin{bmatrix}
1 & 0 \\ 0 & -1
\end{bmatrix}.
\]

From the calculations $Z U = U = -UZ$ and $Z  L= -L = -L Z$, we deduce that $Q_1 \Delta_\hex + \Delta_\hex Q_1 = 0$, and hence
\[
(\Delta_\hex + \lambda Q_1)^2
=
\Delta_\hex^2 + \lambda^2 
\geq
\lambda^2.
\]
Consequently, $(-\lambda,\lambda) \cap \sigma(\Delta_\hex + \lambda Q_1) = \emptyset$ and there is a gap at zero. In particular, the gap is precisely $(-\lambda,\lambda)$, and so opens linearly at the maximal possible rate.
\medskip

Let us consider the $(2,2)$-periodic case. We parameterize our potential as $(q_1,\ldots,q_8) \in \R^8$ as shown in Figure~\ref{fig:hex22period}.
\begin{figure*}[t]

\begin{tikzpicture}[yscale=1]
\draw [-,line width = .1cm,color=black] (0,0) -- (1,{sqrt(3)});
\draw [-,line width = .1cm,color=red] (0,{2*sqrt(3)}) -- (1,{sqrt(3)});
\draw [-,line width = .1cm,color=red] (0,{2*sqrt(3)}) -- (1,{3*sqrt(3)});
\draw [-,line width = .1cm] (0,{4*sqrt(3)}) -- (1,{3*sqrt(3)});
\draw [-,line width = .1cm,color=red] (1,{sqrt(3)}) -- (3,{sqrt(3)});
\draw [-,line width = .1cm,color=red] (1,{3*sqrt(3)}) -- (3,{3*sqrt(3)});
\draw [-,line width = .1cm] (4,{2*sqrt(3)}) -- (6,{2*sqrt(3)});
\draw [-,line width = .1cm] (4,0) -- (3,{sqrt(3)});
\draw [-,line width = .1cm,color=red] (4,{2*sqrt(3)}) -- (3,{sqrt(3)});
\draw [-,line width = .1cm,color=red] (4,{2*sqrt(3)}) -- (3,{3*sqrt(3)});
\draw [-,line width = .1cm] (4,{4*sqrt(3)}) -- (3,{3*sqrt(3)});
\draw [-,line width = .1cm] (4,0) -- (6,0);
\draw [-,line width = .1cm,color=red] (4,{2*sqrt(3)}) -- (6,{2*sqrt(3)});
\draw [-,line width = .1cm] (4,{4*sqrt(3)}) -- (6,{4*sqrt(3)});
\draw [-,line width = .1cm] (6,0) -- (7,{sqrt(3)});
\draw [-,line width = .1cm] (6,{2*sqrt(3)}) -- (7,{sqrt(3)});
\draw [-,line width = .1cm] (6,{2*sqrt(3)}) -- (7,{3*sqrt(3)});
\draw [-,line width = .1cm] (6,{4*sqrt(3)}) -- (7,{3*sqrt(3)});
\draw [-,line width = .1cm] (-2,0) -- (0,0);
\draw [-,line width = .1cm,color=red] (-2,{2*sqrt(3)}) -- (0,{2*sqrt(3)});
\draw [-,line width = .1cm] (-2,{4*sqrt(3)}) -- (0,{4*sqrt(3)});
\draw [-,line width = .1cm] (-2,0) -- (-3,{sqrt(3)});
\draw [-,line width = .1cm] (-2,{2*sqrt(3)}) -- (-3,{sqrt(3)});
\draw [-,line width = .1cm] (-2,{2*sqrt(3)}) -- (-3,{3*sqrt(3)});
\draw [-,line width = .1cm] (-2,{4*sqrt(3)}) -- (-3,{3*sqrt(3)});

\filldraw[color=black, fill=black](0,0) circle (.2);
\filldraw[color=black, fill=black](4,0) circle (.2);
\filldraw[color=black, fill=black](6,0) circle (.2);
\filldraw[color=red, fill=red](1,{sqrt(3)}) circle (.2);
\filldraw[color=red, fill=red](3,{sqrt(3)}) circle (.2);
\filldraw[color=black, fill=black](7,{sqrt(3)}) circle (.2);
\filldraw[color=red, fill=red](0,{2*sqrt(3)}) circle (.2);
\filldraw[color=red, fill=red](4,{2*sqrt(3)}) circle (.2);
\filldraw[color=red, fill=red](6,{2*sqrt(3)}) circle (.2);
\filldraw[color=red, fill=red](1,{3*sqrt(3)}) circle (.2);
\filldraw[color=red, fill=red](3,{3*sqrt(3)}) circle (.2);
\filldraw[color=black, fill=black](7,{3*sqrt(3)}) circle (.2);
\filldraw[color=black, fill=black](0,{4*sqrt(3)}) circle (.2);
\filldraw[color=black, fill=black](4,{4*sqrt(3)}) circle (.2);
\filldraw[color=black, fill=black](6,{4*sqrt(3)}) circle (.2);
\filldraw[color=black, fill=black](-2,{0*sqrt(3)}) circle (.2);
\filldraw[color=red, fill=red](-2,{2*sqrt(3)}) circle (.2);
\filldraw[color=black, fill=black](-2,{4*sqrt(3)}) circle (.2);
\filldraw[color=black, fill=black](-3,{1*sqrt(3)}) circle (.2);
\filldraw[color=black, fill=black](-3,{3*sqrt(3)}) circle (.2);
\node at (-2.5,{2*sqrt(3)}){\hot{$q_1$}};
\node at (.5,{2*sqrt(3)}){\hot{$q_2$}};
\node at (.5,{3*sqrt(3)}){\hot{$q_3$}};
\node at (3.5,{3*sqrt(3)}){\hot{$q_4$}};
\node at (.5,{sqrt(3)}){\hot{$q_5$}};
\node at (3.5,{sqrt(3)}){\hot{$q_6$}};
\node at (3.5,{2*sqrt(3)}){\hot{$q_7$}};
\node at (6.5,{2*sqrt(3)}){\hot{$q_8$}};
\end{tikzpicture}
\caption{A portion of the hexagonal lattice. A fundamental domain for a $(2,2)$-periodic potential is highlighted in red.}\label{fig:hex22period}
\end{figure*}
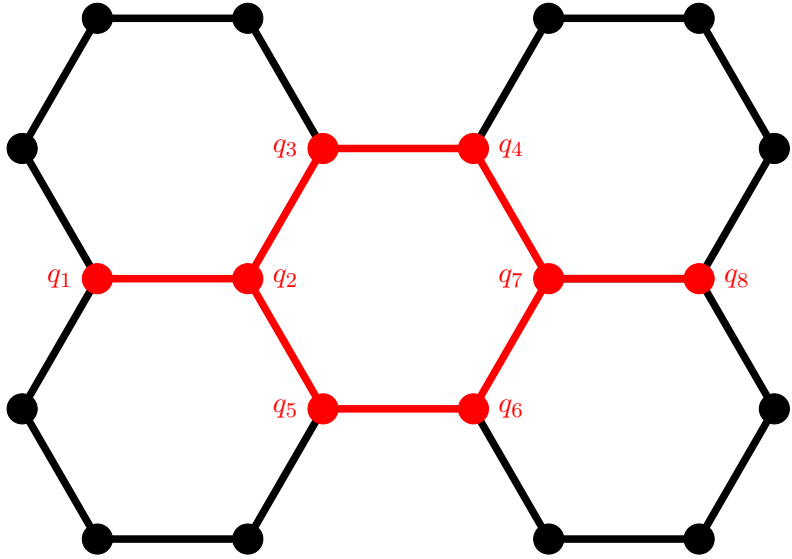

We now turn to the construction of a potential that opens gaps at $0$, $1$, and $-1$ simultaneously. We show that it opens gaps linearly at zero, quadratically at $\pm1$. Later on, we will show that one cannot open gaps linearly at $\pm 1$ on both sides.

\begin{theorem}\label{thm:hexQ}
Order the vertices of a $2\times 2$ fundamental cell of the hexagonal lattice as shown in Fig.~\ref{fig:hex22period}, define a $(2,2)$-periodic potential $Q$ by
\[
(q_1,\ldots,q_8)
=
(1,-1,1,2,-2,-1,1,-1),
\]
and denote $H_\lambda = \Delta_\hex + \lambda Q$. Then, for $ |\lambda| > 0$ sufficiently small, $\sigma(H_\lambda)$ consists of four connected components. Moreover, if $\mathfrak{g}_{E,\lambda} = (g_{E,\lambda}^{-}, g_{E,\lambda}^{+})$ denote the gaps of $\sigma(H_{\lambda})$ that open at $E=0,\pm 1$, one has
\[
 \Big(\pm 1-\frac{\lambda^2}{20}, \pm 1+\frac{\lambda^2}{20}\Big)\subset  \mathfrak{g}_{\pm 1,\lambda} {\subseteq \Big(\pm 1 - \frac{1}{2}\lambda^2,\pm1 + \frac{1}{2}\lambda^2 \Big)},\]
and
\[ \ \ \ \ \Big(-\frac{\lambda}{{5}}, \frac{\lambda}{{5}}\Big)\subset  \mathfrak{g}_{0,\lambda} \subset \Big(-\frac{\lambda}{4}, \frac{\lambda}{4}\Big)\]
for all $|\lambda| > 0$ sufficiently small.
\end{theorem}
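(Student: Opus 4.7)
The plan is to mirror the strategy used for Theorem~\ref{thm:triExampleGapLength}: compute the characteristic polynomial of the $8 \times 8$ Floquet matrix $H_\lambda(\bthe)$ explicitly and then analyze the sign of the resulting expression
\[
p(\bthe,\lambda,\varepsilon) := \det\bigl(H_\lambda(\bthe) - (E_\ast + \varepsilon)\bbI\bigr)
\]
in each of the three regimes $E_\ast \in \{-1, 0, 1\}$. First I would order the eight vertices of the fundamental domain so that the four ``$+$''-sublattice sites appear first and the four ``$-$''-sublattice sites second, producing the block form
\[
H_\lambda(\bthe)
=
\begin{bmatrix} \lambda D_+ & M(\bthe)^* \\ M(\bthe) & \lambda D_- \end{bmatrix},
\]
where $D_\pm$ are $4 \times 4$ diagonal matrices carrying the potential values on the two sublattices and $M(\bthe)$ is the $4\times 4$ matrix encoding the hexagonal hopping with the Floquet phases. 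The resulting polynomial $p(\bthe,\lambda,\varepsilon)$ will be polynomial in $\varepsilon,\lambda$ with coefficients that are trigonometric polynomials in $(\theta_1,\theta_2)$.

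For the gap at $E_\ast = 0$, I would exploit the bipartite structure via the Schur complement: up to sign,
\[
\det\bigl(H_\lambda(\bthe)\bigr) = \det\bigl(\lambda^2 D_+ D_- - M(\bthe)^* M(\bthe)\bigr)
\]
whenever $D_-$ is invertible, with an analogous identity otherwise. Expanding $p(\bthe,\lambda,\varepsilon)$ about $\varepsilon = 0$ and tracking the lowest-order contributions in $\lambda$ should yield a uniform bound $p(\bthe,\lambda,\varepsilon) \neq 0$ as long as $|\varepsilon| \leq \lambda/5$, proving $(-\lambda/5,\lambda/5) \subseteq \mathfrak{g}_{0,\lambda}$. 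For the outer bound I would evaluate $H_\lambda(\bthe_0)$ at a convenient distinguished $\bthe_0$ (e.g.\ $\bthe_0 = \bm{0}$ or $(\pi,\pi)$), where the matrix can be diagonalized explicitly, and exhibit an eigenvalue of modulus strictly less than $\lambda/4$, forcing $\mathfrak{g}_{0,\lambda} \subsetneq (-\lambda/4,\lambda/4)$.

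For the gaps at $E_\ast = \pm 1$, I would invoke the identification \eqref{eq:hexSquareTri}, which shows that $E = \pm 1$ for the hexagonal Laplacian corresponds to $E^2 - 3 = -2$ for the triangular Laplacian, i.e.\ precisely the exceptional touching energy analyzed in Section~\ref{sec:tri}. Consequently the adjacent bands of $\Delta_\hex$ meet at $\pm 1$ with quadratic contact, and so the gap must open quadratically in $\lambda$. Substituting $E = \pm 1 + \varepsilon$ into $p(\bthe,\lambda,\varepsilon)$, expanding to leading order in the small parameters $\lambda$ and $\varepsilon$, and carefully tracking the sign of the leading nonzero term should produce an inequality forcing $p(\bthe,\lambda,\varepsilon) \neq 0$ uniformly in $\bthe$ whenever $|\varepsilon| \leq \lambda^2/20$. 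The outer bounds $\mathfrak{g}_{\pm 1,\lambda} \subseteq (\pm 1 - \lambda^2/2, \pm 1 + \lambda^2/2)$ should then follow from explicit diagonalization at the distinguished $\bthe_0$ together with standard second-order perturbation theory applied to the degenerate eigenvalues of $\Delta_\hex(\bthe_0)$ sitting at $\pm 1$.

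The principal technical obstacle will be controlling the sign of the $8 \times 8$ characteristic polynomial uniformly in $\bthe \in \T^2$. Unlike the triangular case of Theorem~\ref{thm:triExampleGapLength}, where $p$ factored cleanly and the residual trigonometric estimate (Lemma~\ref{lem:triGapLength:trigPolyEst}) could be proved by a two-variable calculus argument, here the larger fiber dimension and the coupling between the two sublattices will make such a clean factorization unlikely. I would expect the argument to reduce to one or two auxiliary trigonometric inequalities analogous to Lemma~\ref{lem:triGapLength:trigPolyEst}, each handled by differentiation in $(\theta_1,\theta_2)$. Obtaining the precise numerical constants ($1/5$ and $1/20$ for the inner bounds, $1/4$ and $1/2$ for the outer bounds) will also require care; the inner bounds should come from the worst-case $\bthe$ making $p$ closest to vanishing, while the outer bounds should come from the $\bthe$ at which the perturbation of the relevant band edge is maximally constructive.
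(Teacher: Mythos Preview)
Your plan is essentially the paper's approach: compute the $8\times 8$ Floquet determinant and control its sign uniformly in $\bthe$, with the outer bounds obtained by evaluating at distinguished values of $\bthe$. The one tactical refinement worth adopting is that the paper parametrizes the energy as $\pm 1 + s\lambda^2$ (respectively $s\lambda$ at $E_\ast=0$) from the outset rather than carrying a separate small parameter $\varepsilon$, so that the $\lambda$-expansion of the determinant lands directly on coefficients $X_k^\pm(\bthe,s)$ (resp.\ $Y_k(\bthe,s)$) whose signs are checked for $|s|$ in the claimed range; the single nontrivial trigonometric inequality needed is $Y_0(\bthe)\ge 0$, handled via the substitution $z=\cos\bigl((\theta_1-\theta_2)/2\bigr)$, $w=\cos\bigl((\theta_1+\theta_2)/2\bigr)$.
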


We point that we do not carefully optimize the constants; it is possible to get better constants than $1/20$, ${1/2}$, ${1/5}$, and $1/4$.

\begin{proof}
For $\bthe = (\theta_1,\theta_2) \in \T^2$, let $H_\lambda(\bthe)$ denote the Floquet matrix corresponding to $H_\lambda$. Ordering the vertices of the fundamental domain as in Figure~\ref{fig:hex22period}, we obtain:
\begin{align}\label{eq:hexH}
H_\lambda(\bthe)
=
\begin{bmatrix}
\lambda & 1 & 0 & e^{-i\theta_1} & 0 & e^{-i\theta_2} & 0 & 0 \\
1 & -\lambda & 1 & 0 & 1 & 0 &0&0 \\
0 & 1 & \lambda & 1 & 0 & 0 & 0 & e^{-i\theta_2} \\
e^{i\theta_1} & 0 & 1 & 2\lambda & 0 & 0 & 1 & 0 \\
0 & 1 & 0 & 0 & - 2\lambda & 1 & 0 & e^{-i\theta_1} \\
e^{i\theta_2} & 0 & 0 & 0 & 1 & - \lambda & 1 & 0  \\
0 & 0 &0 & 1 & 0 & 1 & \lambda & 1 \\
0 & 0 & e^{i\theta_2} & 0 & e^{i\theta_1} & 0 & 1 & -\lambda
\end{bmatrix}
\end{align}

First, let us consider the gaps at $E=\pm 1$.
Calculations yield
\begin{align}\label{eq:hexsumXZ}
\det\big(H_{\lambda}(\bthe)- (\pm 1+ s\lambda^2) \bbI \big)
= 
X_0^\pm(\bthe) + X_4^\pm(\bthe,s) \lambda^4 + X_6^\pm(\bthe,s) \lambda^6 + O(\lambda^8),
\end{align}
in which
\begin{align*}
X_0^\pm(\bthe)&=
-4 (-\sin(\theta_1) + \sin(\theta_1-\theta_2) + \sin(\theta_2))^2\\
X_4^\pm(\bthe,s)
&=
8(s\pm1)(2s\mp 1) (3 -\cos(\theta_1)-\cos(\theta_1-\theta_2)-\cos(\theta_2))\\
X_6^\pm(\bthe,s)
&=
-1 \mp 12 s + 72 s^2 \mp 16 s^3-4 s^2 (\pm 4 s+1)(\cos(\theta_1) + \cos(\theta_1-\theta_2) + \cos(\theta_2))
\end{align*}
It is clear that 
\begin{align}\label{eq:hexX0Z0}
X_0^\pm(\bthe)\leq 0 \quad \text{for all }\bthe \in \T^2.
\end{align}
Since $\cos(\theta_1) + \cos(\theta_1-\theta_2) + \cos(\theta_2)\leq 3$, we also have 
\begin{align}\label{eq:hexX4Z4}
X_4^\pm(\bthe,s)\leq 0 \quad \text{for all } \bthe \in \T^2, \; |s|\leq 1/2.
\end{align}
We also have for $|s|\leq 1/4$,
\[
X_6^+(\bthe,s)\leq -1-12 s+72 s^2-16 s^3+12 s^2 (4s+1)=:T(s),
\]
and 
\[
X_6^-(\bthe,s)= X_6^+(\bthe,-s) \leq T(-s).
\]
One easily checks that $T(s)$ is decreasing on $[-0.05, 0.05]$, and
\[T(-0.05)=-0.194.\]
Hence for $|s|\leq 0.05$,
\begin{align}\label{eq:hexX6Z6}
X_6^\pm(\bthe,s)\leq -0.194.
\end{align}
Combining \eqref{eq:hexX0Z0}, \eqref{eq:hexX4Z4}, and \eqref{eq:hexX6Z6}, we obtain that for $|\lambda|>0$ sufficiently small, and $|s| \leq 1/20$,
\[\det(H_{\lambda}(\bthe)- (\pm 1+ s\lambda^2) \bbI)\leq -0.1\lambda^6<0.\]
This proves the claimed lower bound on the gaps at $\pm 1$.

On the other hand, let us note that $X_0^\pm(0,0) =X_0^\pm(\pi,\pi) = 0$, while
\begin{align*}
\begin{cases}
X_4^+(\bthe, 0.5)= 0 \quad \text{and}\quad X_6^+((\pi,\pi),0.5)=12,\\
X_4^+((0,0),s)=0 \quad \text{and}\quad X_6^+((0,0),-0.5)=28,\\
X_4^-((0,0),s)=0\quad \text{and} \quad X_6^-((0,0),0.5)=28,\\
X_4^-(\bthe, -0.5)=0 \quad \text{and}\quad X_6^-((\pi, \pi), -0.5)=12.
\end{cases}
\end{align*}
Thus for small $\lambda>0$, we have
\begin{align*}
\begin{cases}
\det(H_\lambda(\pi,\pi)-(1+0.5\lambda^2)\bbI) > 0,\\
\det(H_\lambda(0,0)-(1-0.5\lambda^2)\bbI)>0,\\
\det(H_\lambda(0,0)-(-1+0.5\lambda^2)\bbI)>0,\\
\det(H_\lambda(\pi,\pi)-(-1-0.5\lambda^2)\bbI)>0.
\end{cases}
\end{align*}
We also easily check that 
\[X_0^\pm\left(\frac{\pi}{2},\pi\right)=-16,\]
which implies that for small $\lambda>0$, we have
\[\det\left(H_\lambda\left(\frac{\pi}{2},\pi \right)-(\pm 1\pm 0.5 \lambda^2 \bbI) \right)<0.\]
We therefore conclude that 
\[\pm 1 + 0.5 \lambda^2 \in \sigma(H_\lambda) \text{ and } \pm 1 - 0.5 \lambda^2 \in \sigma(H_\lambda),\]
which proves the upper bounds on the gaps at $\pm 1$.

Now let us consider the gap at $E=0$. After calculations, we have
\begin{align}\label{eq:hexsumY}
\det(H_{\lambda}(\bthe)- s\lambda \bbI )=Y_0(\bthe)+Y_2(\bthe,s)\lambda^2+Y_4(\bthe,s)+O(\lambda^6),
\end{align}
where 
\begin{align*}
Y_0(\bthe)
& =
15 + 2\cos(2\theta_1) - 4 \cos(\theta_1 - 2\theta_2) + 2 \cos(2\theta_1-2\theta_2) 
- 4 \cos(2\theta_1 - \theta_2) \\ & \qquad + 2\cos(2\theta_2) - 4 \cos(\theta_1 + \theta_2),
\end{align*}
\[
Y_2(\bthe, s)=2[5-26 s^2+(2+4 s^2)(\cos(\theta_1)+\cos(\theta_1-\theta_2)+\cos(\theta_2))],
\]
and 
\[
Y_4(\bthe,s)=(1-s^2)[-3 - 42 s^2 + 4 (2 + s^2) (\cos(\theta_1)+\cos(\theta_1-\theta_2)+\cos(\theta_2))]
\]
We claim that
\begin{equation}
\label{eq:hexY0geq0}
Y_0(\bthe)\geq 0
\quad
\text{for all } \bthe \in \T^2.
\end{equation} 
Let us see how to use \eqref{eq:hexY0geq0} to prove the claimed gap at zero and defer the proof of \eqref{eq:hexY0geq0} for a moment. Using
\[\cos(\theta_1)+\cos(\theta_1-\theta_2)+\cos(\theta_2)\in \left[-\frac{3}{2}, 3\right],\]
we obtain that for $|s|<1/5$
\begin{align}\label{eq:hexY2}
Y_2(\bthe, s)\geq 2(5-26s^2-3(1+2s^2))=4(1-16 s^2)>\frac{36}{25}.
\end{align}
Combining \eqref{eq:hexsumY} with \eqref{eq:hexY2}, we obtain that for $|\lambda|>0$ sufficiently small
\[\det(H_{\lambda}(\bthe)- s\lambda\, \bbI )>\lambda^2.\]
This proves the claimed lower bound of the gap at $0$, modulo the claim that $Y_0(\bthe) \geq 0$ for all $\bthe \in \T^2$. 

To prove the upper bound, we compute
\begin{align*}
\begin{cases}
Y_0\left(\frac{2\pi}{3}, \frac{4\pi}{3}\right)=0,\\
Y_2\left(\left(\frac{2\pi}{3}, \frac{4\pi}{3}\right), s\right)=4(1-16 s^2),\\
Y_4\left(\left(\frac{2\pi}{3}, \frac{4\pi}{3}\right), s\right)=3(s^2-1)(16s^2+5),
\end{cases}
\end{align*}
which implies that for small $\lambda>0$,
\[\det\left(H_{\lambda}\left(\frac{2\pi}{3}, \frac{4\pi}{3}\right)\pm 0.25 \lambda\, \bbI \right)<0.\]
We also compute that $Y_0(0,0)=9$, which shows for small $\lambda>0$, 
\[\det(H_{\lambda}(0,0)\pm 0.25 \lambda\, \bbI)>0.\]
Thus we conclude that
\[\pm 0.25 \lambda \in \sigma(H_\lambda),\]
which proves the claimed upper bound of the gap at $0$.

To complete the argument, all that remains is to show $Y_0(\bthe)\geq 0$ for all $\bthe\in \T^2$.
To that end, introduce two auxiliary variables
\[
z := \cos\left(\frac{\theta_1 - \theta_2}{2} \right),
\quad
w := \cos\left(\frac{\theta_1 + \theta_2}{2} \right),
\]
and write $g(z,w)$ to mean $Y_0(\bthe)$ in the variables $z$ and $w$. Thus, to optimize $Y_0(\bthe)$ on $\T^2$, it suffices to optimize $g(z,w)$ on the square $[-1,1]^2$. To execute this change of variables, first note the following simple consequences of standard identities:
\begin{align*}
\cos(2\theta_1) +  \cos(2\theta_2)
& =
2(2z^2-1)(2w^2-1),\\
\cos(2\theta_1 - 2\theta_2)
& =
2(2z^2-1)^2-1,\\
\cos(\theta_1 + \theta_2)
& =
2w^2 -1,\\
\cos(\theta_1 - 2\theta_2) + \cos(2\theta_1 - \theta_2)
& =
2 zw(4z^2 - 3).
\end{align*}
Putting all this together,
\begin{align*}
g(z,w)
& =
15+ 4(2z^2-1)(2w^2-1) - 8zw(4z^2-3)+ 2(2(2z^2-1)^2-1) - 4(2w^2-1).
\end{align*}
It is easy to check that $g \geq 0$ holds on the boundary; concretely,
\begin{align*}
g(\pm 1,w)
& =
15 + 4(2w^2-1) \mp 8w + 2 - 4(2w^2-1) \\
& =
17 \mp 8w \\
& \geq
17 - 9 \\
& > 0.
\end{align*}
and
\begin{align*}
g(z,\pm 1)
& =
15 + 4(2z^2-1) \mp 8z(4z^2-3) + (16z^4-16z^2+2) - 4 \\
& =
16 z^4 \mp 32 z^3 - 8z^2 \pm 24z + 9 \\
& =
(3 \pm 4z - 4z^2)^2 \\
& \geq 0.
\end{align*}
So, we now seek zeros of $\nabla g$ for $|z| < 1$ and $|w| < 1$.
One easily computes $\partial_z g$ and $\partial_w g$:
\begin{align*}
\partial_z g
& = 
8(w-2z)(3+4z(w-z)) \\
\partial_w g
& =
8(3z - 4z^3 + 4w(z^2-1)).
\end{align*}
Setting $\partial_w g = 0$ yields
\begin{equation} \label{eq:Y0critical:wval}
w
=
\frac{4z^3 - 3z}{4(z^2-1)}.
\end{equation}
Since we are working on the interior of $[-1,1]^2$, $z \neq \pm 1$ and the denominator does not vanish. Substituting this expression for $w$ into $\partial_z g$ and simplifying, we get
\[
\partial_zg \left(z,\frac{4z^3 - 3z}{z^2-1} \right)
=
2z \left( \frac{1}{(z^2-1)^2} - 16 \right).
\]
Setting this equal to zero, we obtain three values of $z$ with $|z| <1$: $0$ and $\pm \sqrt{3}/2$. Inserting these $z$ values into \eqref{eq:Y0critical:wval}, the corresponding $w$ values are all readily seen to be zero. Plugging in the three critical points $(0,0)$ and $(\pm \sqrt{3}/2,0)$ into $g$ yields 25 and 16, respectively, which concludes the proof that $g \geq 0$ and hence 
\[
Y_0(\bthe) \geq 0
\]
for all $\bthe \in \T^2$, proving \eqref{eq:hexY0geq0}.
\end{proof}

Next, we show that for any $(2,2)$-periodic potential, it is impossible that it opens linear order gaps on both sides of $E=\pm 1$ simultaneously.
\begin{theorem}\label{thm:E=pm1linear}
For any $(2,2)$-periodic potential $Q$ and any constant $c>0$, the following holds for all sufficiently small $\lambda>0$:
\[ \left( (-1-c\lambda, -1+c\lambda) \cup (1-c\lambda, 1+c\lambda) \right) \cap \sigma(H_{\lambda})\neq \emptyset\]
\end{theorem}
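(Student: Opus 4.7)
The plan is to derive a contradiction via first-order degenerate perturbation theory at the Floquet parameter $\bthe = (0,0)$, exploiting the chiral symmetry of the bipartite hexagonal structure. First I would record the unperturbed band geometry at this point. Using the identity \eqref{eq:hexTriBandRel} together with the explicit computation of $\Delta_\tri^\Lambda(0,0)$ (eigenvalues $6,-2,-2,-2$), the Floquet matrix $\Delta_\hex^\Lambda(0,0)$ has the spectrum $\{-3,-1,-1,-1,1,1,1,3\}$, and $E=\pm 1$ is triply degenerate. Moreover, the same relation gives $F_{5,\hex}\subseteq[0,1]$ (so that band approaches $+1$ from below) and $F_{6,\hex}=F_{7,\hex}\subseteq[1,\sqrt 5]$ (approach from above); symmetrically, $F_{2,\hex}=F_{3,\hex}\subseteq[-\sqrt 5,-1]$ approach $-1$ from below and $F_{4,\hex}\subseteq[-1,0]$ from above.

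Assume for contradiction that both intervals $(1-c\lambda,1+c\lambda)$ and $(-1-c\lambda,-1+c\lambda)$ are disjoint from $\sigma(H_\lambda)$ for all small $\lambda>0$. Let $P_{\pm 1}$ denote the spectral projections of $\Delta_\hex(0,0)$ onto its $\pm 1$-eigenspaces, and let $\mu_1\le\mu_2\le\mu_3$ (resp.\ $\nu_1\le\nu_2\le\nu_3$) be the eigenvalues of $P_{+1}QP_{+1}$ (resp.\ $P_{-1}QP_{-1}$). By first-order degenerate perturbation theory, the three Floquet eigenvalues of $H_\lambda(0,0)$ near $+1$ are $1+\lambda\mu_i+O(\lambda^2)$, and similarly near $-1$. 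For the gap at $+1$ of size $c\lambda$ to be clean at $\bthe=(0,0)$, the continuation of $F_5$ (the smallest perturbed eigenvalue) must satisfy $1+\lambda\mu_1\le 1-c\lambda$, while the continuations of $F_6,F_7$ must satisfy $1+\lambda\mu_i\ge 1+c\lambda$ for $i=2,3$. Thus $\{\mu_1,\mu_2,\mu_3\}$ has signature ``one value $\le -c$, two values $\ge c$''. The analogous analysis at $-1$, exchanging the roles of ``above'' and ``below'' (since $F_2,F_3$ touch $-1$ from below and $F_4$ from above), forces $\{\nu_1,\nu_2,\nu_3\}$ to have signature ``two values $\le -c$, one value $\ge c$''.

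The contradiction then comes from chiral symmetry. Let $\sigma_3 = \bbI\oplus(-\bbI)$ in the sublattice decomposition $\ell^2(\Z^2,\C^2)$. Using \eqref{eq:hexDecomp} one checks $\sigma_3\Delta_\hex\sigma_3=-\Delta_\hex$, and since $Q$ is block-diagonal in the two sublattices, $\sigma_3 Q\sigma_3=Q$. These identities descend to the Floquet matrices, so $\sigma_3$ sends the $+1$-eigenspace of $\Delta_\hex(0,0)$ bijectively onto the $-1$-eigenspace, and
\[
P_{-1}QP_{-1}=\sigma_3\bigl(P_{+1}QP_{+1}\bigr)\sigma_3.
\]
Hence $P_{\pm 1}QP_{\pm 1}$ are unitarily similar and share the same spectrum: $\{\mu_1,\mu_2,\mu_3\}=\{\nu_1,\nu_2,\nu_3\}$ as multisets. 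But a multiset cannot simultaneously have signature ``one $\le -c$, two $\ge c$'' and ``two $\le -c$, one $\ge c$''. This contradiction completes the argument.

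The main obstacle I anticipate is bookkeeping of band orientation at the degeneracy: one must justify that the ordered continuations of $F_5,F_6,F_7$ at $\bthe=(0,0)$ really correspond to the specific sides (below vs.\ above $E=1$) claimed, rather than permuting under the perturbation. This is where the explicit band identification via \eqref{eq:hexTriBandRel} and the $(2,2)$-Floquet analysis of $\Delta_\tri$ carried out in Section~\ref{sec:tri} is essential: it pins down which of the three degenerate eigenvalues at $(0,0)$ belongs to which band globally, forcing the asymmetric sign counts at $\pm 1$ that the chiral symmetry then forbids.
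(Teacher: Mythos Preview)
Your approach is correct and genuinely different from the paper's. The paper proceeds by a direct determinant computation: expanding $\det\bigl(H_\lambda(\bm 0)-(\pm 1+s\lambda)\bbI\bigr)$ in powers of $\lambda$, it finds that the first nonvanishing coefficient is the $\lambda^3$--term, which is a cubic $a_0+a_2s^2+64s^3$ in $s$ and satisfies $X_3^+(\bm 0,s)=-X_3^-(\bm 0,s)$. Picking any $s_0\in(-c,c)$ where this cubic is nonzero, the paper compares the sign of the determinant at $\bthe=(0,0)$ with its sign at $\bthe=(\pi/4,3\pi/4)$ (where the $\lambda^0$--term is $-4$) and concludes by the intermediate value theorem that $1+s_0\lambda$ or $-1+s_0\lambda$ lies in $\sigma(H_\lambda)$.

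Your argument replaces this computation by structure: the chiral symmetry $\sigma_3\Delta_\hex\sigma_3=-\Delta_\hex$ explains \emph{why} the paper's sign flip $X_3^+=-X_3^-$ occurs, and you combine it with the unperturbed band geometry to force incompatible first--order signatures at $\pm 1$. The bookkeeping you flag at the end can indeed be settled: from $\det(\Delta_\tri(\bthe)+2\bbI)=X(\bthe)\le 0$ and $\sum_{\bm\ell}e_{\bm\ell}=0$ one checks that exactly one of the four triangular eigenvalues lies $\le -2$ for every $\bthe$, so $F_{1,\tri}=[-3,-2]$, $\min F_{2,\tri}=\min F_{3,\tri}=-2$ with $\max F_{2,\tri},\max F_{3,\tri}>-2$, and $\min F_{4,\tri}>-2$. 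Via \eqref{eq:hexTriBandRel} this pins down that the perturbed bands $F_6^\lambda,F_7^\lambda$ must lie above any gap at $+1$ while $F_5^\lambda$ lies below, giving your $(1,2)$ signature; the reflected statement at $-1$ gives $(2,1)$. (Your incidental claim $F_6=F_7\subseteq[1,\sqrt5]$ is stronger than needed and the equality is not obviously true, but only $\max F_6,\max F_7>1$ is used.)

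The paper's route is more self--contained and avoids the band--structure verification; your route is more conceptual, isolates the mechanism (bipartite symmetry) responsible for the obstruction, and would transfer to other bipartite geometries without recomputing determinants.
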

\begin{proof}
Let $(q_1,q_2,\ldots,q_8)$ be the potential on a $2\times 2$ fundamental cell, as shown in Fig.~\ref{fig:hex22period}. 
The corresponding Floquet matrix $H_{\lambda}(\bthe)$ is 
\[
H_\lambda(\bthe)
=
\begin{bmatrix}
\lambda q_1 & 1 & 0 & e^{-i\theta_1} & 0 & e^{-i\theta_2} & 0 & 0 \\
1 & \lambda q_2& 1 & 0 & 1 & 0 &0&0 \\
0 & 1 & \lambda q_3& 1 & 0 & 0 & 0 & e^{-i\theta_2} \\
e^{i\theta_1} & 0 & 1 & \lambda q_4& 0 & 0 & 1 & 0 \\
0 & 1 & 0 & 0 & \lambda q_5& 1 & 0 & e^{-i\theta_1} \\
e^{i\theta_2} & 0 & 0 & 0 & 1 & \lambda q_6& 1 & 0  \\
0 & 0 &0 & 1 & 0 & 1 & \lambda q_7& 1 \\
0 & 0 & e^{i\theta_2} & 0 & e^{i\theta_1} & 0 & 1 & \lambda q_8
\end{bmatrix}
\]
For $0<|s|<c$, let us consider 
\[\det\big(H_\lambda(\bthe)-(\pm 1+s\lambda)\bbI \big)
=
\sum_{k=0}^8 X_k^\pm (\bthe, s) \lambda^k.\]
After a calculation, we obtain
\begin{align}\label{eq:X0Y00}
X_0^\pm(\bm{0},s)
=
X_1^\pm(\bm{0},s)
=
X_2^\pm(\bm{0},s)
=0\quad
\text{ for all }s
\end{align}
and
\begin{align}\label{eq:X3Y30}
&X_3^+(\bm{0}, s)=-X_3^-(\bm{0}, s) = a_0 + a_2 s^2 + 64s^3,
\end{align}
where
\begin{align*}
a_0 & =
-2[(q_1 + q_2 + q_7 + q_8) (q_4 + q_5) (q_3 + q_6) + (q_1 + q_8) (q_2 + q_7) (q_3 + q_4 + q_5 + q_6)] \\
  & \quad +8[(q_1+q_8)(q_2+q_7)+(q_3+q_6)(q_4+q_5)+(q_1+q_2+q_7+q_8)(q_3+q_4+q_5+q_6)]\\
a_2  &= -24 \sum_{k=1}^8 q_k .
\end{align*}
By \eqref{eq:X3Y30}, we have 
\[
X_3^+(\bm{0}, s_0) = -X_3^-(\bm{0}, s_0)\neq 0
\]
for some $s_0$ such that $0<|s_0|<c$.
Without loss of generality, we assume
\[X_3^+(\bm{0}, s_0)>0>X_3^-(\bm{0},s_0).\]
Combining this with \eqref{eq:X0Y00}, we obtain
\begin{equation} \label{eq:det00>0}
\det(H_\lambda(\bm{0})-(1+s_0\lambda) \bbI)>0
\end{equation}
for small $\lambda > 0$. We also have
\begin{align}\label{eq:X0Y0Pi/4}
X_0^\pm((\pi/4, 3\pi/4), s_0)=-4.
\end{align}
In particular, \eqref{eq:X0Y0Pi/4} implies that 
\begin{align}\label{eq:detPi/4<0}
\det(H_\lambda(\pi/4,3\pi/4))-(1+s_0\lambda) \bbI)<0
\end{align}
for all $\lambda \geq 0$ small.

Combining \eqref{eq:detPi/4<0} with \eqref{eq:det00>0}, for any sufficiently small $\lambda > 0$, there exists $\bthe$ such that
\[\det(H_{\lambda}(\bthe)-(1+s_0\lambda) \bbI)=0.\]
Hence 
\[(1-c\lambda, 1+c\lambda)\cap \sigma(H_{\lambda})\neq \emptyset\]
as claimed.
\end{proof}

\section{Square Laplacian with Next-Nearest Neighbor Interactions}\label{sec:nnn}
We now turn our attention to the EHM lattice, whose Laplacian is given by
\begin{align*}
[\Delta_\sqn u]_{n,m}
& =
u_{n-1,m} + u_{n+1,m} + u_{n,m-1} + u_{n,m+1} + u_{n-1,m+1} + u_{n-1,m+1} + u_{n+1,m-1} + u_{n+1,m+1}\\
& =
[\Delta_{\rm sq}u]_{n,m} + u_{n-1,m-1} + u_{n-1,m+1} + u_{n+1,m-1} + u_{n+1,m+1} \\
& =
[\Delta_{\tri}u]_{n,m} + u_{n-1,m-1} + u_{n+1,m+1}.
\end{align*}
Now, given $p_1,p_2 \in \Z_+$, we define $P = p_1p_2$ and $\Lambda = \Z^2 \cap \big([0,p_1) \times [0,p_2)\big)$ as before and view $\Delta_\sqn$ as a $(p_1,p_2)$-periodic operator and perform the Floquet decomposition. For $\bthe = (\theta_1,\theta_2) \in \R^2$, it is straightforward to check that
\[
\sigma(H(\bthe))
=
\set{e_{\bm{\ell}}(\bthe) : \bm{\ell}\in \Lambda },
\]
where $\bm{\ell}=(\ell_1,\ell_2)$ and
\begin{align*}
e_{\bm{\ell}}(\bthe)
=
2\cos\left( \frac{\theta_1+2\pi \ell_1}{p_1}\right) 
+ 2\cos\left(\frac{\theta_2+2\pi \ell_2}{p_2}\right) 
+ &2\cos\left(\frac{\theta_1 + 2\pi \ell_1}{p_1}  - \frac{\theta_2 + 2\pi \ell_2}{p_2}\right)\\
+ &2\cos\left(\frac{\theta_1 + 2\pi \ell_1}{p_1}  + \frac{\theta_2 + 2\pi \ell_2}{p_2}\right).
\end{align*}
As in Section~\ref{sec:floquet}, we label these eigenvalues in increasing order according to multiplicity by
\[
E_1(\bthe)
\le 
E_2(\bthe)\le \cdots E_P(\bthe)
\]
and denote the $P$ spectral bands by
\[
F_k
=
\set{E_k(\bthe) : \bthe \in \R^2},
\quad
1 \le k \le P.
\]
Straightforward computations shows that $\sigma(\Delta_\sqn)=[-4,8]$, hence 
\[
\bigcup_{k=1}^P F_k
=
[-4,8].
\]
Our main theorem of this section is
\begin{theorem}\label{thm:sqnmain}
Let $p_1,p_2 \in \Z_+$ be given.
\begin{enumerate}
\item[{\rm 1.}]
Each $E\in (-4, 8)\setminus \{-1\}$ belongs to $\mathrm{int}(F_k)$ for some $1\leq k\leq P$.
\item[{\rm 2.}] If one of the periods $p_1, p_2$ is not divisible by three, then $E=-1$ belongs to $\mathrm{int}(F_k)$ for some $1\leq k\leq P$.
\end{enumerate}
\end{theorem}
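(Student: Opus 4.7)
My plan is to follow the perturb-and-count strategy of Section~\ref{sec:tri}, anchored by the algebraic factorization
\[
e_{\bm\ell}(\bthe) + 1 \;=\; (2\cos\xi_1 + 1)(2\cos\xi_2 + 1),
\qquad
\xi_j := (\theta_j + 2\pi\ell_j)/p_j,
\]
which is specific to the EHM Floquet eigenvalue and identifies $E=-1$ as the unique energy at which one of the two factors can vanish. From the resulting product form $\partial_{\theta_j}e = -\tfrac{2}{p_j}\sin\xi_j(2\cos\xi_{3-j} + 1)$, one reads off that $\nabla e = \bm 0$ can happen only at the isolated points with $e \in \{-4, 0, 8\}$ (where both $\sin\xi_j = 0$) and at the four points $(\xi_1, \xi_2) \in \{2\pi/3, 4\pi/3\}^2$, where $e = -1$. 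As in Section~\ref{sec:tri}, I would argue by contradiction, assuming $E = \max F_k = \min F_{k+1}$.

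For Part 1, I split into two regimes. For $E \in (-1, 8)$, parameterize a solution to $e = E$ by $\xi_2^{(1)} = 2\pi - \xi_1^{(1)}$ with $(2\cos\xi_1^{(1)}+1)^2 = E+1$; the direction $\bbe_1 = (p_1, p_2)/\sqrt{p_1^2 + p_2^2}$ annihilates $\nabla e_{\bm\ell^{(1)}}(\tbthe)$ (since $\sin(\xi_1+\xi_2)=0$ and $\sin\xi_2 = -\sin\xi_1$), and a direct Hessian computation yields $\bbe_1^\top \nabla^2 e \,\bbe_1 = -4(\cos\xi_1 + 2)/(p_1^2 + p_2^2) < 0$ uniformly on $\CJ_{\bbe_1}^0$. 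For $E \in (-4, -1)$, parameterize instead by $\xi_1^{(1)} = 0$ and $\cos\xi_2^{(1)} = (E-2)/6$ and use $\bbe_1 = (1,0)$: then $\partial_{\theta_1}e = 0$ from $\sin\xi_1 = 0$, while $\partial_{\theta_1}^2 e = -2(E+1)/(3p_1^2) > 0$, with the same positive sign at the only other candidate point $(\pi, \arccos(-(E+2)/2))$. In either regime the definite sign of the quadratic form along $\bbe_1$, combined with a generic $\bbe_2$ making $\CJ_{\bbe_2}^0 = \emptyset$, lets the counting inequalities of Section~\ref{sec:tri} reproduce verbatim and produce the contradiction; the isolated critical points at the boundary-type energies $E \in \{-4, 0, 8\}$ have definite Hessians compatible with these $\bbe_1$ choices and cause no issue.

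For Part 2, assume without loss of generality $p_1 \not\equiv 0 \pmod 3$. Choose $\tbthe$ so that $\xi_1 = 2\pi/3$ lies in the orbit $\{(\tthe_1 + 2\pi\ell_1)/p_1 \bmod 2\pi\}$; the arithmetic $3 \nmid p_1$ then forces $\xi_1 = 4\pi/3$ to lie outside that orbit. Consequently $\Lambda_{-1}(\tbthe)$ splits as Type A ($\xi_1 = 2\pi/3$, arbitrary $\xi_2$), Type B ($\xi_2 \in \{2\pi/3, 4\pi/3\}$, $\xi_1 \neq 2\pi/3$), and the doubly critical set $\CJ_0$, which has cardinality $1$ if $3 \nmid p_2$ and $2$ if $3 \mid p_2$. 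Perturbing along $\bbe = (1,1)/\sqrt{2}$, one checks that Type B contributes equally to $\CJ_\bbe^\pm$ (paired through the two choices $\xi_2\in\{2\pi/3,4\pi/3\}$ with opposite $\sin\xi_2$), the elements of $\CJ_0$ carry opposite-sign Hessians $\begin{pmatrix} 0 & \pm 3/(p_1p_2) \\ \pm 3/(p_1p_2) & 0 \end{pmatrix}$ and cancel pairwise when $|\CJ_0| = 2$, while Type A contributes \emph{asymmetrically} because the orbit of $\xi_2$ meets the long arc $\{\cos\xi_2 > -1/2\}$ of length $4\pi/3$ at strictly more points than the short arc $\{\cos\xi_2 < -1/2\}$ of length $2\pi/3$. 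The resulting strict inequality $|\CJ_\bbe^-| > |\CJ_\bbe^+|$ contradicts the bound $\max(|\CJ_\bbe^+|, |\CJ_\bbe^-|) \le \min(s, r-s)$ obtained by applying the perturbation in both directions $\pm\bbe$.

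The main obstacle is Part 2, and it is precisely the issue flagged in Remark~\ref{rem:sqn}: the Hessians at the four critical points at $E=-1$ come in pairs of opposite signs, so no direction $\bbe$ can force all critical eigenvalues to drift the same way at second order, which rules out a direct analogue of the $E = -2$ argument for the triangular lattice. The only available lever is the first-order imbalance produced by Type A, and it exists precisely because $3 \nmid p_j$ breaks the symmetry of the orbit of $\xi_j$ against the two unequal arcs of the circle of lengths $2\pi/3$ and $4\pi/3$. Carefully tracking the sign of this imbalance across all arithmetic subcases (including the degenerate small-period configurations, where the spectrum is a single interval anyway) and against the opposite-sign cancellation inside $\CJ_0$ is where the bulk of the technical work lies.
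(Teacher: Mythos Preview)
Your Part~1 is correct but more complicated than the paper's. The paper uses a single parameterization for all $E\neq -1$: take $\xi_1^{(1)}=0$, $\cos\xi_2^{(1)}=(E-2)/6$, and $\bbe_1=(1,0)$. Then $\CJ_{\bbe_1}^0$ is characterized by $\sin\xi_1(1+2\cos\xi_2)=0$, which (for $E\neq -1$) forces $\xi_1\in\{0,\pi\}$, and in both cases the second $\theta_1$-derivative equals $-\tfrac{2}{p_1^2}\cos\xi_1(1+2\cos\xi_2)$, whose sign is opposite to that of $E+1$ uniformly on $\CJ_{\bbe_1}^0$. This avoids your case split. Your diagonal direction $\bbe_1\propto(p_1,p_2)$ for $E>-1$ works too, but you should note that the ``case~(ii)'' solutions of the directional-derivative equation (those with $\cos\tfrac{\xi_1-\xi_2}{2}+2\cos\tfrac{\xi_1+\xi_2}{2}=0$) only exist for $E\le -1$, which is why the quadratic form stays negative on $\CJ_{\bbe_1}^0$; this deserves a line. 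At $E=0$ the paper rules out $\CJ_0\neq\emptyset$ via the irrationality of $\tfrac{1}{2\pi}\arccos(-1/3)$, whereas your negative-definite Hessian at $(\pi,\pi)$ is also fine.

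Your Part~2 has a real gap. The paper's key move is to choose $\tthe_2$ so that the orbit $\{(\tthe_2+2\pi\ell_2)/p_2\}$ \emph{never} meets $\{2\pi/3,4\pi/3\}$ (concretely $\tthe_2=(k_2+1)\pi/4$ with $p_2=3p_2'+k_2$). This forces $\Lambda_{-1}(\tbthe)$ to be exactly your Type~A set, with Type~B and $\CJ_0$ both empty; then $\bbe=(1,0)$ gives $\CJ_\bbe^0=\emptyset$ and the $2\pi/3$-versus-$4\pi/3$ arc-length imbalance gives $|\CJ_\bbe^+|\neq|\CJ_\bbe^-|$ directly. You instead implicitly choose $\tthe_2$ so that $\xi_2=2\pi/3$ is in the orbit, and then claim Type~B ``contributes equally to $\CJ_\bbe^\pm$, paired through the two choices $\xi_2\in\{2\pi/3,4\pi/3\}$.'' This pairing only exists when $3\mid p_2$; if $3\nmid p_2$, only one of $2\pi/3,4\pi/3$ lies in the $\xi_2$-orbit, and Type~B does \emph{not} pair up. (It so happens that the unpaired Type~B imbalance points the same way as Type~A, so the conclusion survives, but the argument as written is incorrect.) Your final ``bound $\max(|\CJ_\bbe^+|,|\CJ_\bbe^-|)\le\min(s,r-s)$'' also does not by itself yield a contradiction with $|\CJ_\bbe^+|\neq|\CJ_\bbe^-|$ when $\CJ_\bbe^0\neq\emptyset$; you need to combine it with $|\CJ_\bbe^+|+|\CJ_\bbe^-|=r-|\CJ_\bbe^0|$ and track how the $\CJ_0$ elements contribute under $\pm\bbe$. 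The paper sidesteps all of this by making $\CJ_\bbe^0$ empty from the start.
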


\begin{proof}[Proof of Theorem~\ref{t:bsc:nnn}]
This follows immediately from Theorem~\ref{thm:sqnmain}.
\end{proof}

\subsection{Proof of Theorem \ref{thm:sqnmain}} 

As with the proof of Theorem~\ref{thm:trimain}, we will divide the proof into two different cases: $E\neq -1$ and $E=-1$ and argue by contradiction.
To that end, assume for the sake of establishing a contradiction that $E=\min F_{k+1}=\max F_k$ for some $1\leq k\leq P-1$.

We will use the following lemmas, whose proofs we provide at the end of the present section.
\begin{lemma}\label{lem:constructionsqn}
Let us consider the following system:
\begin{align}\label{eq:xyCondABsqn}
\cos(x) + \cos(y) + \cos(x-y)+\cos(x+y) & = \frac{E}{2} \\
\sin(x) + \sin(x-y)+\sin(x+y) & = 0. \notag
\end{align}
For any $E \in (-4,8) \setminus \{-1\}$, the solution set of \eqref{eq:xyCondABsqn} in $[0,2\pi)^2$ satisfies
\begin{align}\label{eq:solutionx=0sqn}
x=0,\ \  1+2\cos(y)=\frac{E+1}{3},
\end{align}
or 
\begin{align}\label{eq:solutionx=pisqn}
x=\pi,\ \ 1+2\cos(y)=-(E+1).
\end{align}
\end{lemma}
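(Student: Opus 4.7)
The plan is to exploit a hidden product structure in the second equation of \eqref{eq:xyCondABsqn}. The product-to-sum identity $\sin(x-y) + \sin(x+y) = 2\sin(x)\cos(y)$ rewrites this equation as
\[
\sin(x)\bigl(1 + 2\cos(y)\bigr) = 0,
\]
which cleanly splits the analysis into two disjoint cases: either $\sin(x) = 0$, forcing $x \in \{0,\pi\}$, or $\cos(y) = -1/2$. The two resulting branches will correspond exactly to the two families claimed by the lemma, together with one degenerate branch that is ruled out by the hypothesis $E \neq -1$.

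For the branch $\sin(x) = 0$, I would apply the complementary identity $\cos(x-y)+\cos(x+y) = 2\cos(x)\cos(y)$ to reduce the first equation of \eqref{eq:xyCondABsqn} to $\cos(x) + \bigl(1+2\cos(x)\bigr)\cos(y) = E/2$. Substituting $x=0$ yields $1 + 3\cos(y) = E/2$, which after clearing denominators rearranges to $1+2\cos(y) = (E+1)/3$, i.e.\ to \eqref{eq:solutionx=0sqn}. Substituting $x=\pi$ yields $-1-\cos(y) = E/2$, which rearranges to $1+2\cos(y) = -(E+1)$, i.e.\ to \eqref{eq:solutionx=pisqn}.

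The decisive step is ruling out the remaining branch $\cos(y) = -1/2$. Substituting this into the simplified first equation above gives $\cos(x) - \tfrac{1}{2} - \cos(x) = E/2$, i.e.\ $E = -1$, which contradicts the hypothesis $E \in (-4,8)\setminus\{-1\}$. Consequently no solutions arise from this branch, and the proof is complete.

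I do not anticipate any serious obstacles; the argument rests on two standard trigonometric identities, and the only genuine insight is the factorization of the sine equation, which both isolates the exceptional energy $E = -1$ and exposes why two distinct families of solutions (indexed by $x=0$ and $x=\pi$) appear for all other energies.
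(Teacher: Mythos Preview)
Your proposal is correct and essentially identical to the paper's own proof: both factor the sine equation via $\sin(x-y)+\sin(x+y)=2\sin(x)\cos(y)$, substitute $x=0,\pi$ into the cosine equation using $\cos(x-y)+\cos(x+y)=2\cos(x)\cos(y)$, and eliminate the $\cos(y)=-1/2$ branch by showing it forces $E=-1$. The only cosmetic difference is that you labeled the cases ``disjoint,'' which is slightly imprecise (they could overlap), but this has no effect on the argument.
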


\begin{lemma}\label{lem:sqnJ0empty}
Consider the following system:
\begin{equation} \label{eq:sqnJ0syst}
\begin{cases}
\cos(x) + \cos(y) + \cos(x+y) +\cos(x-y) = \frac{E}{2},\\
\sin(x)+\sin(x-y)+\sin(x+y)=0,\\
\sin(y)-\sin(x-y)+\sin(x+y)=0.
\end{cases}
\end{equation}
For any $E \in (-4,8) \setminus \{0, -1\}$, the solution set of \eqref{eq:sqnJ0syst} is empty. 
For $E=0$, the unique solution of \eqref{eq:sqnJ0syst} in $[0,2\pi)^2$ is $(\pi,\pi)$.
For $E =-1$, the solutions of \eqref{eq:sqnJ0syst} in $[0,2\pi)^2$ are $({2\pi}/{3},{2\pi}/{3})$, $({2\pi}/{3},{4\pi}/{3})$, $({4\pi}/{3},{2\pi}/{3})$ and $({4\pi}/{3},{4\pi}/{3})$.
\end{lemma}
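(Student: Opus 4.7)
The plan is to decouple the two sine equations in \eqref{eq:sqnJ0syst} completely by applying the standard identities
\[
\sin(x-y) + \sin(x+y) = 2\sin(x)\cos(y),
\qquad
\sin(x+y) - \sin(x-y) = 2\cos(x)\sin(y).
\]
The second equation of \eqref{eq:sqnJ0syst} then becomes $\sin(x)(1 + 2\cos(y)) = 0$, and the third becomes $\sin(y)(1 + 2\cos(x)) = 0$. Thus any solution must satisfy
\[
\left(\sin(x) = 0 \text{ or } \cos(y) = -\tfrac{1}{2}\right)
\text{ and }
\left(\sin(y) = 0 \text{ or } \cos(x) = -\tfrac{1}{2}\right).
\]

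Next, I would eliminate the ``mixed'' branches by noting that they are self-contradictory. If $\sin(x) = 0$, then $\cos(x) \in \{\pm 1\}$, so the branch $\cos(x) = -\tfrac{1}{2}$ from the second disjunction is impossible, forcing $\sin(y) = 0$. Symmetrically, $\cos(y) = -\tfrac{1}{2}$ implies $\sin(y) \neq 0$, which forces $\cos(x) = -\tfrac{1}{2}$. Consequently only two scenarios survive:
\begin{itemize}
\item[\textbf{(A)}] $\sin(x) = \sin(y) = 0$, giving $(x,y) \in \{0,\pi\}^2$;
\item[\textbf{(B)}] $\cos(x) = \cos(y) = -\tfrac{1}{2}$, giving $(x,y) \in \{2\pi/3,\, 4\pi/3\}^2$.
\end{itemize}

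Finally, I would substitute each of the eight candidate pairs into the first equation of \eqref{eq:sqnJ0syst} to read off the corresponding $E$. In scenario~(A) one computes $E = 8$ at $(0,0)$, $E = -4$ at $(0,\pi)$ and $(\pi,0)$, and $E = 0$ at $(\pi,\pi)$. Restricting to $E \in (-4,8)$ discards the first three, leaving $(\pi,\pi)$ as the unique solution when $E = 0$. In scenario~(B), using $\cos(\pm 2\pi/3) = \cos(\pm 4\pi/3) = -\tfrac{1}{2}$ together with $\cos(0) = 1$ and $\cos(\pm 4\pi/3) = -\tfrac{1}{2}$, all four pairs give $\cos(x)+\cos(y)+\cos(x-y)+\cos(x+y) = -\tfrac{1}{2}$, hence $E = -1$. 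Collating these cases yields exactly the three conclusions of the lemma.

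The substantive step is the product-to-sum decoupling that reduces the system to a finite search; everything else is routine bookkeeping, so I expect no real obstacle beyond organizing the case split cleanly.
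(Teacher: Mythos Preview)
Your proof is correct and follows essentially the same route as the paper: both factor the second and third equations as $\sin(x)(1+2\cos(y))=0$ and $\sin(y)(1+2\cos(x))=0$, observe that the mixed branches are impossible, and then evaluate the first equation on the eight remaining candidate pairs. Your write-up is in fact slightly more explicit than the paper's about why the mixed cases cannot occur.
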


We will use Lemma \ref{lem:constructionsqn} in the $E\neq -1$ case, and Lemma \ref{lem:sqnJ0empty} in the $E=-1$ case.

\subsubsection{$E\neq -1$}\

\begin{proof}[Proof of Theorem~\ref{thm:sqnmain}.1]
Let $E \in (-4,8)\setminus\set{-1}$ be given, and suppose towards a contradiction that $E = \max F_k = \min F_{k+1}$ for some $k$. Define $\tbthe=(\tthe_1,\tthe_2)\in [0,2\pi)^2$ and  $\bm{\ell}^{(1)}=(\ell_1^{(1)},\ell_2^{(1)})\in \Lambda$ via
\begin{align}\label{def:ttheellsqnE}
\tthe_1=0,\ \ell_1^{(1)}=0,\ \frac{\tthe_2+2\pi \ell_2^{(1)}}{p_2}=\arccos\Big(\frac{E-2}{6}\Big)\in  (0,\pi).
\end{align}
Note that since $E\in (-4, 8)$, we have $\frac{E-2}{6}\in (-1,1)$, hence $\arccos\Big(\frac{E-2}{6}\Big)$ is always well-defined.
Note also that $\tthe_2$ and $\ell_2^{(1)}$ are uniquely determined.
Using \eqref{def:ttheellsqnE}, one easily checks that 
\[e_{\bm{\ell}^{(1)}}(\tbthe)=E,\]
and
\begin{align}\label{eq:sqnJbeta10}
(1,0)\cdot \nabla e_{\bm{\ell}^{(1)}}(\tbthe)=0.
\end{align}
As in the proof of Theorem~\ref{thm:trimain}, denote $\Lambda_E(\tbthe) = \{\bm{\ell} \in \Lambda : e_{\bm{\ell}}(\tbthe) = E\}$, let $r := |\Lambda_E(\tbthe)|$ be the multiplicity of $E$ as an eigenvalue of $H(\tbthe)$, and choose $s\in \Z\cap [1,r]$ such that 
\[E_{k-s}(\tbthe)<E_{k-s+1}(\tbthe)=\cdots=E_k(\tbthe)=\cdots=E_{k+r-s}(\tbthe)<E_{k+r-s+1}(\tbthe).\]
Since all the eigenvalues are continuous in $\bthe$, we can take $\varepsilon>0$ small enough such that 
\[E_{k-s}(\bthe)<E_{k-s+1}(\bthe),\, \text{ and }\, E_{k+r-s}(\bthe)<E_{k+r-s+1}(\bthe),\]
holds whenever $\|\bthe-\tbthe\|_{\R^2}<\varepsilon$. Given $\bm{\ell}\in \Lambda$ and a unit vector $\bbe=(\beta_1, \beta_2)$, we have
\begin{align}
e_{\bm{\ell}}(\tbthe+ t \bbe)
& = e_{\bm{\ell}}(\tbthe)+t\bbe \cdot \nabla e_{\bm{\ell}}(\tbthe)+O(t^2) \label{eq:pertgeneralbetasqn1order} \\
& =e_{\bm{\ell}}(\tbthe)+t\bbe \cdot \nabla e_{\bm{\ell}}(\tbthe) \label{eq:pertgeneralbetasqn2order}\\
&\qquad -\frac{t^2}{2}\Bigg[\frac{\beta_1^2}{p_1^2}\cos\Big(\frac{\tthe_1+2\pi \ell_1}{p_1}\Big)
+\frac{\beta_2^2}{p_2^2}\cos\Big(\frac{\tthe_2+2\pi \ell_2}{p_2}\Big) \notag\\ 
&\qquad +\Big(\frac{\beta_1}{p_1}-\frac{\beta_2}{p_2}\Big)^2 \cos\Big(\frac{\tthe_1+2\pi \ell_1}{p_1}-\frac{\tthe_2+2\pi \ell_2}{p_2}\Big)\notag\\
&\qquad +\Big(\frac{\beta_1}{p_1}+\frac{\beta_2}{p_2}\Big)^2 \cos\Big(\frac{\tthe_1+2\pi \ell_1}{p_1}+\frac{\tthe_2+2\pi \ell_2}{p_2}\Big)\Bigg] +O(t^3).  \notag
\end{align}
In particular, we will use \eqref{eq:pertgeneralbetasqn1order} if $\bbe\cdot  \nabla e_{\bm{\ell}}(\tbthe)\neq 0$, and \eqref{eq:pertgeneralbetasqn2order} otherwise.

For any vector $\bbe\in \R^2\setminus \{\bm{0}\}$, let 
\begin{equation}\label{def:Jbetasqn}
\begin{aligned}
\mathcal{J}_{\bbe}^0
=
\CJ_{\bbe}^0(\tbthe):
&=
\{\bm{\ell} \in \Lambda_E(\tbthe) :\ \bbe\cdot \nabla e_{\bm{\ell}}(\tbthe)=0\},\\
\mathcal{J}_{\bbe}^{\pm} = \CJ_{\bbe}^\pm(\tbthe):
&=
\{\bm{\ell} \in \Lambda_E(\tbthe) :\ \pm \bbe \cdot \nabla e_{\bm{\ell}}(\tbthe)>0\}.
\end{aligned}
\end{equation}
By definition, we must have
\begin{align}\label{eq:sumJbetasqn}
|\mathcal{J}_{\bbe}^0|+|\mathcal{J}_{\bbe}^+|+|\mathcal{J}_{\bbe}^-|=r
\end{align}
for any $\bbe$. We also define $\mathcal{J}_0$ as follows
\begin{align}\label{def:J0sqn}
\mathcal{J}_0
=\CJ_0(\tbthe)
:=
\{\bm{\ell} \in \Lambda_E(\tbthe):\ \nabla e_{\bm{\ell}}(\tbthe) = \bm{0}\}.
\end{align}
If $E\neq 0$, Lemma~\ref{lem:sqnJ0empty} directly implies $\mathcal{J}_0=\emptyset$.
If $E=0$, $\mathcal{J}_0$ is also empty. 
To see this, suppose on the contrary that $\bm{\ell} = (\ell_1,\ell_2) \in \CJ_0$.
Lemma~\ref{lem:sqnJ0empty} implies that
\begin{equation} \label{eq:sqn:J0emptyE=0eq1}
\frac{\tthe_2+2\pi \ell_2}{p_2}=\pi,
\end{equation}
and \eqref{def:ttheellsqnE} forces
\begin{equation} \label{eq:sqn:J0emptyE=0eq2}
\frac{\tthe_2+2\pi \ell^{(1)}_2}{p_2}=\arccos\left(-\frac{1}{3}\right).
\end{equation}
Subtracting \eqref{eq:sqn:J0emptyE=0eq1} from \eqref{eq:sqn:J0emptyE=0eq2} yields
\[\frac{\ell^{(1)}_2-\ell_2}{p_2}=\frac{1}{2\pi}\arccos\left(-\frac{1}{3}\right)-\frac{1}{2}.\]
However, this implies that $(2\pi)^{-1}\arccos\left(-1/3\right)$ is a rational number, which contradicts the following well-known fact, whose proof we supply at the end of the present section.
\begin{lemma}\label{lem:arccos1/3}
\[\frac{1}{2\pi}\arccos\left(-\frac{1}{3}\right)\in \R\setminus \Q.\]
\end{lemma}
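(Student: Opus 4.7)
My plan is to prove this by contrapositive using a Niven-style argument that tracks $3$-adic denominators along a Chebyshev recurrence. Suppose, towards a contradiction, that $\theta := \arccos(-1/3)$ satisfies $\theta/(2\pi) = p/q$ for some integers $p, q$ with $q \geq 1$. Then $q\theta = 2\pi p$, so $2\cos(q\theta) = 2$ is an integer, and it suffices to show that $2\cos(n\theta)$ fails to be an integer for every $n \geq 1$.

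To implement this, I would set $a_n := 2\cos(n\theta)$ and use the product-to-sum identity $2\cos\alpha \cdot 2\cos\beta = 2\cos(\alpha+\beta) + 2\cos(\alpha-\beta)$ to obtain the Chebyshev-type recurrence
\[
a_{n+1} \;=\; a_1 a_n - a_{n-1} \;=\; -\tfrac{2}{3}\,a_n - a_{n-1}, \qquad a_0 = 2,\; a_1 = -\tfrac{2}{3}.
\]
The key claim, which I would establish by induction on $n$, is that $a_n = b_n/3^n$ for some integer $b_n$ with $3 \nmid b_n$. Clearing denominators in the recurrence produces
\[
b_{n+1} \;=\; -2\,b_n - 9\,b_{n-1},
\]
and reducing modulo $3$ gives $b_{n+1} \equiv -2 b_n \equiv b_n \pmod{3}$; starting from $b_0 = 2$ and $b_1 = -2$, both coprime to $3$, the induction propagates.

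With the claim in hand, the denominator of $a_n$ in lowest terms equals $3^n$ for every $n \geq 1$, and in particular $a_n \notin \Z$, contradicting $a_q = 2$. The main obstacle is essentially trivial: one just has to get the $3$-adic bookkeeping in the inductive step right, and then the contradiction is automatic. No genuinely hard step is expected, as this is a standard template for proving irrationality of $\arccos(r)/(2\pi)$ whenever $r$ is rational with prime denominator greater than $2$.
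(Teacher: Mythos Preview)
Your proof is correct and follows essentially the same route as the paper: both argue by contradiction that $2\cos(n\theta)$ (equivalently $T_n(-1/3)$) can never be an integer by tracking the $3$-adic denominator. The paper packages this via the leading coefficient $2^{n-1}$ of the Chebyshev polynomial $T_n$, while you package it via the recurrence $b_{n+1}=-2b_n-9b_{n-1}$ reduced modulo $3$; the underlying idea is identical.
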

Therefore $\mathcal{J}_0=\emptyset$ for any $E\neq -1$.

We choose $\bbe_1=(1,0)$. Then \eqref{eq:sqnJbeta10} implies $\bm{\ell}^{(1)} \in \CJ_{\bbe_1}^0$, and hence
\begin{align}\label{eq:beta1sqnnon-empty}
\mathcal{J}_{\bbe_1}^0\neq \emptyset.
\end{align}

Next we are going to perturb the point $\tbthe$ and count the eigenvalues.
Since $\mathcal{J}_0=\emptyset$, we can choose a unit vector $\bbe_2$ such that 
\begin{align}\label{eq:beta2sqnnon-empty}
\bbe_2\cdot \nabla e_{\bm{\ell}}(\tbthe)\neq 0,
\end{align}
holds for any $\bm{\ell} \in \Lambda_E(\tbthe)$.
Thus $\mathcal{J}_{\bbe_2}^0=\emptyset$ and
\begin{align}\label{eq:beta2sqnnon-empty'}
|\mathcal{J}_{\bbe_2}^+|+|\mathcal{J}_{\bbe_2}^-|=r.
\end{align}
Arguing as in the proof of Theorem~\ref{thm:trimain}.1, we deduce 
\begin{align}\label{eq4:Jbeta2sqn}
r=2s.
\end{align}

\subsubsection*{Perturbation along $\bbe_1$}
Now we perturb the eigenvalues along $\bbe_1=(1,0)$.
The case when ${\bm{\ell}} \in \mathcal{J}_{\bbe_1}^{\pm}$ is similar to that of $\bbe_2$.
The difference here is that, according to \eqref{eq:beta1sqnnon-empty}, $\mathcal{J}_{\bbe_1}^0\neq \emptyset$.

By Lemma~\ref{lem:constructionsqn}, we have that for $(\ell_1,\ell_2)\in \mathcal{J}_{\bbe_1}^0$,
\begin{align}\label{eq:beta1sqnt2sign}
(E+1)\Bigg[
\cos\Big(\frac{\widetilde{\theta}_1 + 2\pi {\ell_1}}{p_1}\Big)
+&\cos\Big(\frac{\widetilde{\theta}_1 + 2\pi {\ell_1}}{p_1}-\frac{\widetilde{\theta}_2 + 2\pi {\ell_2}}{p_2}\Big)\\
&\qquad +\cos\Big(\frac{\widetilde{\theta}_1 + 2\pi {\ell_1}}{p_1}+\frac{\widetilde{\theta}_2 + 2\pi {\ell_2}}{p_2}\Big) 
\Bigg]
>0. \notag
\end{align}
Indeed, if $(\ell_1,\ell_2)\in \mathcal{J}_{\bbe_1}^0$, 
$(x,y) = (p_1^{-1} (\tthe_1 + 2\pi \ell_1), p_2^{-1}(\tthe_2 + 2\pi \ell_2))$ is a solution to \eqref{eq:xyCondABsqn}.
Hence Lemma~\ref{lem:constructionsqn} implies that 
we have 
either 
\[\frac{\widetilde{\theta}_1 + 2\pi {\ell_1}}{p_1}=0,\ \ 1+2\cos\Big(\frac{\widetilde{\theta}_2 + 2\pi {\ell_2}}{p_2}\Big)=\frac{E+1}{3},\]
or 
\[\frac{\widetilde{\theta}_1 + 2\pi {\ell_1}}{p_1}=\pi,\ \ 1+2\cos\Big(\frac{\widetilde{\theta}_2 + 2\pi {\ell_2}}{p_2}\Big)=-(E+1).\]
Clearly, both cases lead to \eqref{eq:beta1sqnt2sign}.

By employing \eqref{eq:pertgeneralbetasqn2order}, we obtain
\begin{align}
e_{{\bm{\ell}}}(\tbthe + t \bbe_1)
=
E & - {\frac{ t^2}{2 p_1^2}} \Bigg[
\cos\Big(\frac{\widetilde{\theta}_1 + 2\pi {\ell_1}}{p_1}\Big)
+\cos\Big(\frac{\widetilde{\theta}_1 + 2\pi {\ell_1}}{p_1}-\frac{\widetilde{\theta}_2 + 2\pi {\ell_2}}{p_2}\Big)\\
& \qquad\qquad\qquad
+\cos\Big(\frac{\widetilde{\theta}_1 + 2\pi {\ell_1}}{p_1}+\frac{\widetilde{\theta}_2 + 2\pi {\ell_2}}{p_2}\Big) 
\Bigg] + O(t^3)\notag
\end{align}
for $\bm{\ell} \in \CJ_{\bbe_1}^0$. Combining this with \eqref{eq:beta1sqnt2sign}, we obtain that for $|t|>0$ small enough
\begin{align}\label{eq:Jbeta10sqn}
e_{{\bm{\ell}}}(\tbthe + t \bbe_1)
\begin{cases}
<E,\ \ \text{if } E+1>0,\\
>E,\ \ \text{if } E+1<0.
\end{cases}
\end{align}
Notice that the choice of $\bbe_1$ causes the second $t^2$ term of \eqref{eq:pertgeneralbetasqn2order} to drop out.

Without loss of generality, we assume $E\in (-1, 8)$. The complementary case when $E\in (-4, -1)$ can be handled similarly.
For $E\in (-1, 8)$, \eqref{eq:Jbeta10sqn} implies that 
\begin{align}\label{eq:Jbeta10sqn'}
e_{{\bm{\ell}}}(\tbthe + t \bbe_1)
<
E
=
\min F_{k+1},
\end{align}
holds for $|t|>0$ small enough and for any $\bm{\ell} \in \mathcal{J}_{\bbe_1}^0$. 

Combining \eqref{eq:Jbeta10sqn'} with \eqref{eq:pertgeneralbetasqn1order}, we have the following.

For $t>0$ small enough,
\begin{itemize}
\item If ${\bm{\ell}} \in \mathcal{J}_{\bbe_1}^+$, we have
\[
E_{k+r-s+1}(\tbthe + t \bbe_1)
>
e_{{\bm{\ell}}}(\tbthe + t \bbe_1)
>
E
=
\max F_k,
\]
which implies 
\begin{align}\label{eq1:Jbeta1+sqn}
|\mathcal{J}_{\bbe_1}^+|\leq r-s
=
s,
\end{align}
{where the equality follows from \eqref{eq4:Jbeta2sqn}.}

\item If ${\bm{\ell}} \in \mathcal{J}_{\bbe_1}^0 \bigcup \mathcal{J}_{\bbe_1}^-$, we have 
\[
E_{k-s-1}(\tbthe + t \bbe_1)
<
e_{{\bm{\ell}}}(\tbthe + t \bbe_1)
<
E
=
\min F_{k+1},
\]
which implies
\begin{align}\label{eq2:Jbeta1+sqn}
|\mathcal{J}_{\bbe_1}^0|+|\mathcal{J}_{\bbe_1}^-|\leq s.
\end{align}
\end{itemize}
{In view of \eqref{eq:sumJbetasqn} and \eqref{eq4:Jbeta2sqn}, Equations~\eqref{eq1:Jbeta1+sqn} and \eqref{eq2:Jbeta1+sqn} yield
\begin{equation} \label{eq3:Jbeta1+sqn}
|\mathcal{J}_{\bbe_1}^+|
=
|\mathcal{J}_{\bbe_1}^0|+|\mathcal{J}_{\bbe_1}^-|
=
s.
\end{equation}
As before, we may observe that $\CJ_{-\bbe_1}^0 = \CJ_{\bbe_1}^0$ and $\CJ_{-\bbe_1}^\pm = \CJ_{\bbe_1}^\mp$. Then, the analysis above applied with $\bbe_1$ replaced by $-\bbe_1$ forces
\begin{equation} \label{eq3:Jbeta1-sqn}
|\mathcal{J}_{\bbe_1}^-|
=
|\mathcal{J}_{\bbe_1}^0|+|\mathcal{J}_{\bbe_1}^+|
=
s.
\end{equation}
Taken together, \eqref{eq3:Jbeta1+sqn} and \eqref{eq3:Jbeta1-sqn} imply $|\CJ_{\bbe_1}^0| = 0$, which contradicts \eqref{eq:beta1sqnnon-empty}.
}
\end{proof}

\subsubsection{$E=-1$}\

First, we would like to make a remark on our strategy of the proof of the $E=-1$ case, and on the importance of one of the period being not divisible by $3$.
\begin{remark}\label{rem:sqn}
For the exceptional energy {$E = -1$} of the EHM lattice, we can not use eigenvalues with vanishing gradients to create un-even numbers of counting unless neither $p_1$ nor $p_2$ is divisible by $3$.
The reason is the following: suppose {\it only} $p_1$ is not divisible by $3$ and we choose $\tbthe=(\tthe_1, \tthe_2)$ and $\bm{\ell}^{(1)}=(\ell_1^{(1)}, \ell_2^{(1)})$ such that $e_{\bm{\ell}^{(1)}}(\tbthe)=-1$ and $\nabla e_{\bm{\ell}^{(1)}}(\tbthe)=\bm{0}$. 
Lemma \ref{lem:sqnJ0empty} yields four possibilities $(p_1^{-1}(\tthe_1+2\pi \ell_1^{(1)}), p_2^{-1}(\tthe_2+2\pi \ell_2^{(1)}))=(2\pi/3,2\pi/3)$, $(2\pi/3,4\pi/3)$, $(4\pi/3,2\pi/3)$ or $(4\pi/3,4\pi/3)$.
Without loss of generality, we choose $(2\pi/3, 2\pi/3)$, the other three choices are essentially the same.
Since $p_2$ is divisible by $3$, there exists $\bm{\ell}^{(2)}$, such that 
$(p_1^{-1}(\tthe_1+2\pi \ell_1^{(2)}), p_2^{-1}(\tthe_2+2\pi \ell_2^{(2)}))=(2\pi/3,4\pi/3)$.
Hence $e_{\bm{\ell}^{(2)}}(\tbthe)$ is also located at $-1$ with vanishing gradient.
Perturbing $e_{\bm{\ell}^{(1)}}(\tbthe)$ and $e_{\bm{\ell}^{(2)}}(\tbthe)$ along a given direction $(\beta_1, \beta_2)$ is equivalent to controlling the signs of the following two expressions:
$$\beta_1\beta_2\ \ \text{and}\ \ -\beta_1\beta_2.$$
This means we can never choose two different directions that lead to un-even {counts}.
Therefore we need to develop a new argument for this case.

Indeed, when $p_1$ is not divisible by $3$, we choose $p_1^{-1}(\tthe_1+2\pi \ell_1^{(1)})=2\pi/3$ and $\tthe_2$ such that 
$p_2^{-1}(\tthe_2+2\pi \ell_2)\notin \{2\pi/3, 4\pi/3\}$ {{\it regardless} of the choice of $\ell_2$}. Such choices guarantee that there are in total $p_2$ eigenvalues located at $-1$, which are 
$\{e_{\bm{\ell}}(\tbthe),\ \ \ell_1=\ell_1^{(1)}\}$. It then suffices to control the movements of these eigenvalues along any given direction.
A key observation is that along any direction, approximately $2p_2/3$ eigenvalues will move up (down) while the other $p_2/3$ eigenvalues move down (up), see \eqref{eq:E=-1Jbbe}.
This leads to un-even counting that we need.
Let us point out that if both $p_1, p_2$ are divisible by $3$, this argument does not work {(as it must, given the example constructed in Theorem~\ref{t:nnnExamples})}: there will be $2p_2$ eigenvalues located at $-1$, and $p_2$ of them move up while the other $p_2$ of them move down along any given direction.
\end{remark}

\begin{proof}[Proof of Theorem~\ref{thm:sqnmain}.2]
Without loss of generality, we assume $p_1$ is not divisible by $3$.
Let $p_j=3p_j'+k_j$, where $p_j',k_j \in \Z$ with $0 \le k_j < 3$ and then define $\widetilde\bthe$ by
\begin{align*}
\tthe_1= \frac{2\pi k_1}{3},\ \ 
\tthe_2={\frac{k_2+1}{4}\pi}.
\end{align*}
{As usual, denote $\Lambda_E(\tbthe) = \Lambda_{-1}(\tbthe) = \{\bm{\ell} \in \Lambda : e_{\bm{\ell}}(\tbthe) = -1\}$. We first claim that
\begin{equation} \label{eq:sqnE=-1:multiplicity}
\Lambda_{-1}(\tbthe)
=
\set{(p_1', \ell_2) : 0 \leq \ell_2 < p_2 \text{ and } \ell_2 \in \Z}.
\end{equation}}
Let us consider the trigonometric equation
\begin{align}\label{eq:E=-1}
\cos(x)+\cos(y)+\cos(x-y)+\cos(x+y)=-\frac{1}{2}=\frac{E}{2}.
\end{align}
Using the identity $\cos(x-y)+\cos(x+y)=2\cos(x) \cos(y)$, we see that \eqref{eq:E=-1} is equivalent to 
\[(2\cos(x)+1) (2\cos(y)+1)=0,\]
whose solutions are $\cos(x)=-1/2$ or $\cos(y)=-1/2$. {With our choice of $\tbthe$, it is clear that
\begin{equation} \label{eq:E=-1the1ell1}
\frac{\tthe_1 + 2\pi p_1'}{p_1}
=
\frac{2\pi}{3},
\quad
\cos\left(\frac{\tthe_1 + 2\pi p_1'}{p_1}\right)
=
-\frac{1}{2}.
\end{equation}
Consequently, 
\begin{equation} \label{eq:sqn:E=-1:LambdaEeq1}
e_{(p_1',\ell_2)}(\tbthe) = -1 \quad \text{for every } 0 \le \ell_2 < p_2.
\end{equation}}
Due to our choice of $\tthe_2$, we get
\begin{align}\label{eq:E=-1J0}
\cos(p_2^{-1}(\tthe_2+2\pi \ell_2))\neq -\frac{1}{2}\quad
\text{for any } \ell_2 \in [0,p_2) \cap \Z.
\end{align}
Indeed, since $p_2^{-1}(\tthe_2+2\pi\ell_2) \in [0,2\pi)$, $\cos(p_2^{-1}(\tthe_2+2\pi \ell_2))=-1/2$ would force 
\[\frac{\tthe_2+2\pi \ell_2}{p_2} \in \set{\frac{2\pi}{3}, \frac{4\pi}{3}},\]
which, after doing some algebra, leads to {
\[
3(8\ell_2+k_2+1) \in \{8p_2,16p_2\},
\]
which is plainly impossible, since $\ell_2,p_2 \in \Z$ and $k_2 \in \{0,1,2\}$.}
Additionally, due to our choice of $\tthe_1$, we also have
\begin{align}\label{eq:E=-1Jbeta10}
{\cos(p_1^{-1}(\tthe_1+2\pi \ell_1))\neq -1/2 \quad
\text{for any } \ell_1 \in \big([0,p_1)\cap\Z\big)\setminus\{p_1'\}}.
\end{align}
To see this, suppose on the contrary that \eqref{eq:E=-1Jbeta10} fails. This forces
\[\frac{\tthe_1+2\pi \ell_1}{p_1}=\frac{4\pi}{3}\]
for some $0 \leq \ell_1 < p_1$ with $\ell_1\neq p_1'$. Since
\[\frac{\tthe_1+2\pi p_1'}{p_1}=\frac{2\pi}{3},\]
this implies
\[\frac{2\pi (\ell_1-p_1')}{p_1}=\frac{2\pi}{3},\]
which is impossible since $p_1$ is not divisible by $3$. Combining \eqref{eq:E=-1J0} and \eqref{eq:E=-1Jbeta10} yields
\begin{equation} \label{eq:sqn:E=-1:LambdaEeq2} 
{e_{\bm{\ell}}(\tbthe)\neq - 1
\quad
\text{for any } \bm{\ell} =(\ell_1,\ell_2)\in\Lambda \text{ such that } \ell_1 \neq p_1'.}
\end{equation}
Taken together, \eqref{eq:sqn:E=-1:LambdaEeq1} and \eqref{eq:sqn:E=-1:LambdaEeq2} imply \eqref{eq:sqnE=-1:multiplicity}.

Let us choose $\bbe=(\beta_1,\beta_2)=(1,0)$. 
We have that for any $\bm{\ell}\in \Lambda$:
\begin{align*}
&\bbe\cdot \nabla e_{\bm{\ell}}(\tbthe)\\
=&-\sin\Big(\frac{\tthe_1+2\pi \ell_1}{p_1}\Big)
-\sin\Big(\frac{\tthe_1+2\pi \ell_1}{p_1}-\frac{\tthe_2+2\pi \ell_2}{p_2}\Big)
-\sin\Big(\frac{\tthe_1+2\pi \ell_1}{p_1}+\frac{\tthe_2+2\pi \ell_2}{p_2}\Big)\\
=&-\sin\Big(\frac{\tthe_1+2\pi \ell_1}{p_1}\Big)\Bigg[1+2\cos\Big(\frac{\tthe_2+2\pi \ell_2}{p_2}\Big)\Bigg].
\end{align*}
By \eqref{eq:sqnE=-1:multiplicity}, \eqref{eq:E=-1the1ell1}, and \eqref{eq:E=-1J0}, we have the following {for any $\bm{\ell} = (\ell_1, \ell_2) \in \Lambda_{-1}(\tbthe)$}:
\[\sin\Big(\frac{\tthe_1+2\pi \ell_1}{p_1}\Big)=\frac{\sqrt{3}}{2},\ \ \cos\Big(\frac{\tthe_2+2\pi \ell_2}{p_2}\Big)\neq -\frac{1}{2}.\]
This implies
\begin{align}\label{eq:E=-1Jbbe}
\mathcal{J}_{\bbe}^0=\emptyset,\ \ \text{and }\ \ 
\mathcal{J}_{\bbe}^{\pm}
=
\Bigg\lbrace \bm{\ell} \in \Lambda_{-1}(\tbthe) :\ \ \mp \frac{1}{2}  \mp \cos\Big(\frac{\tthe_2+2\pi \ell_2}{p_2}\Big)>0 \Bigg\rbrace.
\end{align}
Hence we expect that $|\CJ_{\bbe}^+|\sim p_2/3$, and $|\CJ_{\bbe}^-|\sim 2p_2/3$.
More precisely, we note that
\[
{\CJ_{\bbe}^+
=
\Big\lbrace (p_1', \ell_2):\ \ \frac{2\pi}{3} < \frac{(k_2+1)\pi/4 + 2\pi \ell_2}{p_2} < \frac{4\pi}{3} \Big\rbrace}.\]
Using $p_2=3p_2'+k_2$, we obtain
\[
{\CJ_{\bbe}^+
=
\Big\lbrace (p_1',\ell_2):\ \ p_2'+\frac{5k_2-3}{24} <  \ell_2 < 2p_2'+\frac{13k_2-3}{24} \Big\rbrace}.
\]
Consequently,
\begin{align*}
\CJ_{\bbe}^+=
\begin{cases}
\{(p_1', \ell_2):\ \ p_2'\leq \ell_2 \leq 2p_2'-1\},\ \ \text{if } k=0,\\
\{(p_1', \ell_2):\ \ p_2'+1\leq \ell_2 \leq 2p_2'\},\ \ \text{if } k=1, 2.
\end{cases}.
\end{align*}
Therefore 
\begin{align}\label{eq:Jbetapm}
(|\CJ_{\bbe}^+|, |\CJ_{\bbe}^-|)=
\begin{cases}
(p_2', 2p_2'),\ \ \text{if } k=0,\\
(p_2', 2p_2'+1),\ \ \text{if } k=1,\\
(p_2', 2p_2'+2),\ \ \text{if } k=2.
\end{cases}
\end{align}
Note that $p_2'\geq 1$ whenever $k_2 = 0$.
Thus, a direct consequence of \eqref{eq:Jbetapm} is
\begin{align}\label{eq:Jbetapmneq}
|\CJ_{\bbe}^+|\neq |\CJ_{\bbe}^-|.
\end{align}
On the other hand, since $\CJ_{\bbe}^0 = \emptyset$, following the same argument as in the proof of Theorems~\ref{thm:trimain}.1 yields $|\CJ_{\bbe}^+|=|\CJ_{\bbe}^-|$, which contradicts \eqref{eq:Jbetapmneq}.
\end{proof}

\subsection{Proofs of Lemmas \ref{lem:constructionsqn}, \ref{lem:sqnJ0empty}, and \ref{lem:arccos1/3}}

\begin{proof}[Proof of Lemma~\ref{lem:constructionsqn}]
Let $x$ and $y$ solve \eqref{eq:xyCondABsqn} with $E \neq -1$. The second condition therein yields
\[
\sin(x) + 2\sin(x)\cos(y) = 0,
\]
leading to two possibilities: $\sin(x) = 0$ or $\cos(y) = -1/2$. If $\sin(x) = 0$, we get $x = 0$ or $x = \pi$, which yields \eqref{eq:solutionx=0sqn} and \eqref{eq:solutionx=pisqn} upon plugging in to the first condition in \eqref{eq:xyCondABsqn}. In the event that $\cos(y) = -1/2$, we arrive at
\begin{align*}
\cos(x) + \cos(y) + \cos(x-y) + \cos(x+y)
& =
\cos(x) + \cos(y) + 2\cos(x)\cos(y) \\
& =
\cos(x) - \frac{1}{2} - \cos(x) \\
& =
-\frac{1}{2},
\end{align*}
in contradiction with $E \neq -1$.
\end{proof}

\begin{proof}[Proof of Lemma~\ref{lem:sqnJ0empty}]
Suppose $x$ and $y$ satisfy \eqref{eq:sqnJ0syst}. From the proof of Lemma~\ref{lem:constructionsqn}, the second condition of \eqref{eq:sqnJ0syst} implies $\sin(x) = 0$ or $\cos(y) = -1/2$. Thus, $x = 0$, $x = \pi$, $y=2\pi/3$, or $y = 4\pi/3$. 
When $\sin(x) = 0$, the third condition of \eqref{eq:sqnJ0syst} forces $\sin(y) = 0$. The four points so obtained yield $E = 8$ when $(x,y) = (0,0)$, $E=-4$ when $(x,y) = (0,\pi),(\pi,0)$ and $E = 0$ when $(x,y) = (\pi,\pi)$. 
Alternatively, when $\cos(y) = -1/2$, the third condition of \eqref{eq:sqnJ0syst} yields $\cos(x) = -1/2$, which impies $x = 2\pi/3$ or $x = 4\pi/3$. As in the Proof of Lemma~\ref{lem:constructionsqn}, the four points corresponding to
\[
x,y \in \set{\frac{2\pi}{3}, \frac{4\pi}{3}}
\]
all yield $E = -1$.
\end{proof}

\begin{proof}[Proof of Lemma~\ref{lem:arccos1/3}]
Suppose 
\begin{align}\label{eq1:cos1/3}
\cos\left(\frac{2\pi m}{n}\right)=-\frac{1}{3},
\end{align}
for $m/n\in \Q$.
Let $T_n(\cdot)$ denote the $n$-th degree Cheybeshev polynomial so that
\begin{align}\label{eq2:cos1/3}
T_n\left(\cos\left(\frac{2\pi m}{n}\right)\right)=\cos(2\pi m)=1.
\end{align}
It is well-known that $T_n(x)=\sum_{k=0}^n a_k x^k$, where $a_n=2^{n-1}$ and $a_k\in \Z$ for any $k$.
Hence \eqref{eq1:cos1/3} and \eqref{eq2:cos1/3} imply
\[2^{n-1}\left(-\frac{1}{3}\right)^n+\sum_{k=0}^{n-1} a_k \left(-\frac{1}{3}\right)^{k} =1.\]
Multiplying by $(-3)^n$ on both sides of the equation, we obtain
\[2^{n-1}-3\sum_{k=0}^{n-1} a_k (-3)^{n-k-1}=(-3)^n,\]
which implies $2^{n-1}$ is divisible by $3$.
Contradiction.
\end{proof}

\subsection{\boldmath Opening a gap at $-1$}

\begin{figure*}[b]
\begin{tikzpicture}[yscale=0.8, xscale=0.8]
\draw [-,line width = .1cm] (0,0) -- (12,0);
\draw [-,line width = .1cm] (0,0) -- (0,12);
\draw [-,line width=  .1cm] (0,3) -- (12,3);
\draw [-,line width=  .1cm] (3,0) -- (3,12);
\draw [-,line width=  .1cm] (0,6) -- (12,6);
\draw [-,line width=  .1cm] (6,0) -- (6,12);
\draw [-,line width=  .1cm] (0,9) -- (12,9);
\draw [-,line width=  .1cm] (9,0) -- (9,12);
\draw [-,line width=  .1cm] (0,12) -- (12,12);
\draw [-,line width=  .1cm] (12,0) -- (12,12);
\draw [-,line width=  .1cm] (0,0) -- (12,12);
\draw [-,line width=  .1cm] (3,0) -- (12,9);
\draw [-,line width=  .1cm] (6,0) -- (12,6);
\draw [-,line width=  .1cm] (9,0) -- (12,3);
\draw [-,line width=  .1cm] (0,3) -- (9,12);
\draw [-,line width=  .1cm] (0,6) -- (6,12);
\draw [-,line width=  .1cm] (0,9) -- (3,12);
\draw [-,line width=  .1cm] (3,0) -- (0,3);
\draw [-,line width=  .1cm] (6,0) -- (0,6);
\draw [-,line width=  .1cm] (9,0) -- (0,9);
\draw [-,line width=  .1cm] (12,0) -- (0,12);
\draw [-,line width=  .1cm] (12,3) -- (3,12);
\draw [-,line width=  .1cm] (12,6) -- (6,12);
\draw [-,line width=  .1cm] (12,9) -- (9,12);
\filldraw[color=black, fill=black](0,0) circle (.2);
\filldraw[color=black, fill=black](3,0) circle (.2);
\filldraw[color=black, fill=black](6,0) circle (.2);
\filldraw[color=black, fill=black](9,0) circle (.2);
\filldraw[color=black, fill=black](0,3) circle (.2);
\filldraw[color=red, fill=red](3,3) circle (.2);
\filldraw[color=red, fill=red](6,3) circle (.2);
\filldraw[color=red, fill=red](9,3) circle (.2);
\filldraw[color=black, fill=black](0,6) circle (.2);
\filldraw[color=red, fill=red](3,6) circle (.2);
\filldraw[color=red, fill=red](6,6) circle (.2);
\filldraw[color=red, fill=red](9,6) circle (.2);
\filldraw[color=black, fill=black](0,9) circle (.2);
\filldraw[color=red, fill=red](3,9) circle (.2);
\filldraw[color=red, fill=red](6,9) circle (.2);
\filldraw[color=red, fill=red](9,9) circle (.2);
\filldraw[color=black, fill=black](12,0) circle (.2);
\filldraw[color=black, fill=black](12,3) circle (.2);
\filldraw[color=black, fill=black](12,6) circle (.2);
\filldraw[color=black, fill=black](12,9) circle (.2);
\filldraw[color=black, fill=black](12,12) circle (.2);
\filldraw[color=black, fill=black](0,12) circle (.2);
\filldraw[color=black, fill=black](3,12) circle (.2);
\filldraw[color=black, fill=black](6,12) circle (.2);
\filldraw[color=black, fill=black](9,12) circle (.2);
\draw [-,line width=  .1cm,color=red] (3,3) -- (3,9);
\draw [-,line width=  .1cm,color=red] (6,3) -- (6,9);
\draw [-,line width=  .1cm,color=red] (9,3) -- (9,9);
\draw [-,line width=  .1cm,color=red] (3,3) -- (9,3);
\draw [-,line width=  .1cm,color=red] (3,6) -- (9,6);
\draw [-,line width=  .1cm,color=red] (3,9) -- (9,9);
\draw [-,line width=  .1cm,color=red] (3,6) -- (6,9);
\draw [-,line width=  .1cm,color=red] (3,3) -- (9,9);
\draw [-,line width=  .1cm,color=red] (6,3) -- (9,6);
\draw [-,line width=  .1cm,color=red] (3,6) -- (6,3);
\draw [-,line width=  .1cm,color=red] (3,9) -- (9,3);
\draw [-,line width=  .1cm,color=red] (6,9) -- (9,6);
\node at (3.9,3.3){\hot{$q_1$}};
\node at (6.9,3.3){\hot{$q_2$}};
\node at (9.9,3.3){\hot{$q_3$}};
\node at (3.9,6.3){\hot{$q_4$}};
\node at (6.9,6.3){\hot{$q_5$}};
\node at (9.9,6.3){\hot{$q_6$}};
\node at (3.9,9.3){\hot{$q_7$}};
\node at (6.9,9.3){\hot{$q_8$}};
\node at (9.9,9.3){\hot{$q_9$}};
\end{tikzpicture}
\caption{A $3\times 3$ potential on the square lattice that opens a gap at $E=-1$ with small positive positive coupling.}\label{fig:sqn33period}
\end{figure*}
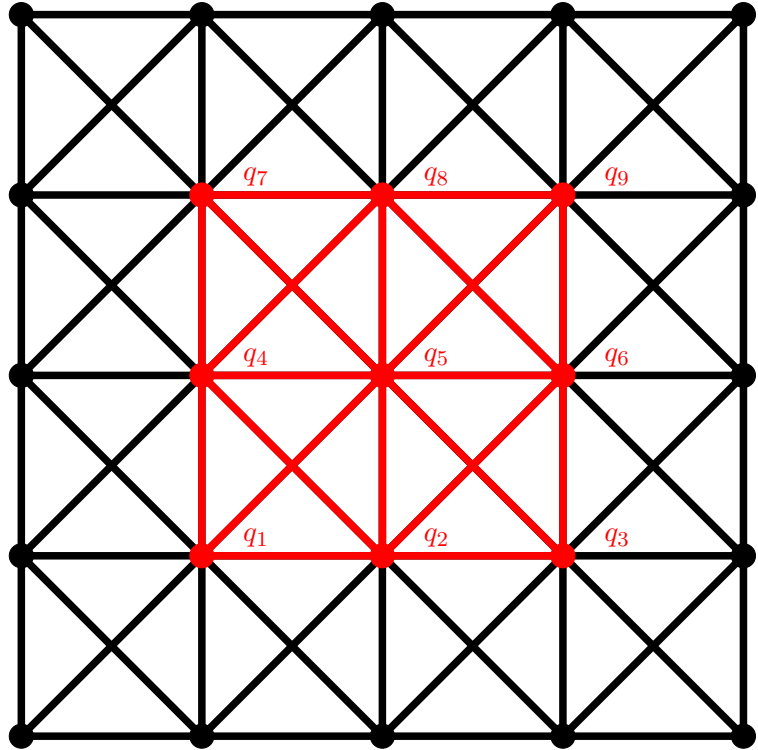

\begin{theorem} \label{thm:nnnExGapLength}
Enumerate the vertices of a $3\times 3$ fundamental cell of the square lattice as in Figure~\ref{fig:sqn33period}, denote $r=\sqrt{4-\sqrt{15}}$, define a $(3,3)$-periodic potential $Q$ on $\Z^2$ via
\[
(q_1,\ldots,q_9)
=
\Big(-r-\frac{1}{r}+2,\ -r,\ -r+\frac{1}{r}-2,\ -\frac{1}{r},\ 0,\ +\frac{1}{r},\ r-\frac{1}{r}-2,\ r,\ r+\frac{1}{r}+ 2\Big),
\]
and denote $H_\lambda = \Delta_\sqn + \lambda Q$. Then, for all $ \lambda > 0$ sufficiently small, $\sigma(H_\lambda)$ consists of two connected components. Moreover, if $\mathfrak{g}_\lambda$ denotes the gap that opens at energy $-1$, one has
\[
\left(-1 - \frac{\lambda}{10}, -1 + \frac{\lambda}{10} \right)
\subseteq
\mathfrak{g}_\lambda
\subseteq
\left(-1 - \frac{\lambda}{4},-1+\frac{\lambda}{4}\right).
\]
In particular, the gap opens linearly.
\end{theorem}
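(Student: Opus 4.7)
The plan is to mirror the strategy of Theorems~\ref{thm:triExampleGapLength} and~\ref{thm:hexQ}: for each $\bthe \in \T^2$, write out the $9\times 9$ Floquet matrix $H_\lambda(\bthe)$ using the vertex ordering of Figure~\ref{fig:sqn33period}, and set
\[
p(\bthe,\lambda,s) := \det\bigl(H_\lambda(\bthe) - (-1+s\lambda)\bbI\bigr).
\]
Showing $p(\bthe,\lambda,s) \neq 0$ for all $\bthe$ whenever $|s|\le 1/10$ and $\lambda>0$ is small will yield the lower inclusion $(-1-\lambda/10,\,-1+\lambda/10)\subseteq \mathfrak{g}_\lambda$, while exhibiting $\bthe_0$ at which $-1 + s_0\lambda \in \sigma(H_\lambda(\bthe_0))$ for some $1/10 < |s_0| < 1/4$ will yield $\mathfrak{g}_\lambda\subseteq (-1-\lambda/4,\,-1+\lambda/4)$.

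The key structural ingredient is the operator-level factorization
\[
\Delta_\sqn + \bbI = (S_1 + S_1^* + \bbI)(S_2 + S_2^* + \bbI),
\]
which on the Floquet side reads $\Delta_\sqn(\bthe)+\bbI = A_1(\theta_1)\otimes A_2(\theta_2)$ on $\C^3\otimes\C^3$, where $A_j(\theta_j)$ is the $3\times 3$ Floquet matrix of $S_j+S_j^*+\bbI$, satisfying $\det A_j(\theta_j) = 4\sin^2(\theta_j/2)$. Consequently,
\[
P_0(\bthe) := p(\bthe,0,0) = 4096\,\sin^6(\theta_1/2)\sin^6(\theta_2/2) \geq 0,
\]
and the zero set of $P_0$ is contained in $\{\theta_1\in 2\pi\Z\} \cup \{\theta_2\in 2\pi\Z\}$. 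Off this set $P_0$ is uniformly positive, so $p(\bthe,\lambda,s)>0$ for small $\lambda$ by continuity, and the analysis concentrates near the degenerate strata.

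Near $\{\theta_1=0,\,\theta_2\neq 0\}$ the matrix $A_1(0)\otimes A_2(\theta_2)$ has rank $3$ and nullity $6$, with kernel $\ker A_1(0)\otimes\C^3$, where $\ker A_1(0) = \{v\in\C^3 : v_1+v_2+v_3=0\}$. A Schur-complement expansion produces
\[
p((0,\theta_2),\lambda,s) = \lambda^6\,C(\theta_2)\,\det\bigl(\widetilde Q_{\mathrm{col}}(\theta_2) - s\bbI_6\bigr) + O(\lambda^7),
\]
with $C(\theta_2)>0$ and $\widetilde Q_{\mathrm{col}}(\theta_2)$ the $6\times 6$ compression of the diagonal matrix $Q$ to the kernel. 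Because $Q$ is diagonal and the kernel is a tensor product, $\widetilde Q_{\mathrm{col}}$ splits into three $2\times 2$ Hermitian blocks $\widetilde Q_j$ (one per column $j\in\{1,2,3\}$) with $\operatorname{tr}\widetilde Q_j = \tfrac{2}{3}\sum_i q_{i,j}$ and $\det\widetilde Q_j = \tfrac{1}{3}\sum_{i<i'} q_{i,j}q_{i',j}$. The symmetric analysis near $\{\theta_2=0\}$ yields a row compression $\widetilde Q_{\mathrm{row}}$, likewise split into three $2\times 2$ blocks. Using the algebraic identities $r+1/r = \sqrt{10}$ and $r-1/r = -\sqrt{6}$ that follow from $r^2 = 4-\sqrt{15}$, the twelve eigenvalues of $\widetilde Q_{\mathrm{col}}\oplus\widetilde Q_{\mathrm{row}}$ can be computed in closed form, and one verifies that each satisfies $|\mu_k| > 1/10$, while the $i=1$ row block yields an eigenvalue in $(1/10,\,1/4)$ (roughly $0.109$), which is the binding case for both bounds. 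The codimension-$2$ point $\bthe = (0,0)$, where $\Delta_\sqn(0,0)+\bbI = J_9$ has $8$-dimensional kernel $\mathbf{1}_9^\perp$, is handled by examining $PQP$ on $\mathbf{1}_9^\perp$ (with $P = \bbI - \tfrac{1}{9}J_9$) and using $\sum_i q_i = 0$ together with the column and row bounds.

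Combining these ingredients, the lower bound on $\mathfrak{g}_\lambda$ follows from the uniform non-vanishing of $p$ on $\T^2$ for $|s|\leq 1/10$; extending the estimates off the degenerate strata requires a joint $(\theta_1,\lambda)$-expansion interpolating between the $\theta_1^6$-vanishing of $P_0$ and the $\lambda^6$-scaling of the degenerate correction. The upper bound $\mathfrak{g}_\lambda \subseteq (-1-\lambda/4,\,-1+\lambda/4)$ follows by choosing $\bthe_0$ at which the small eigenvalue $\mu_k\in(1/10,1/4)$ of $\widetilde Q_{\mathrm{row}}$ is realized, producing a spectral point of $H_\lambda$ at $-1+\lambda\mu_k+O(\lambda^2)$ that lies outside $(-1-\lambda/10,\,-1+\lambda/10)$ but inside $(-1-\lambda/4,\,-1+\lambda/4)$. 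The main obstacle is the degenerate perturbation calculation at the codimension-$1$ and codimension-$2$ pieces of $\{P_0=0\}$, where the eigenvalue $-1$ has multiplicity six and eight respectively, and where all the perturbed eigenvalues must simultaneously leave the interval about $-1$; the specific value $r=\sqrt{4-\sqrt{15}}$ is calibrated so that the spectral radii of $\widetilde Q_{\mathrm{col}}$ and $\widetilde Q_{\mathrm{row}}$ fall into the desired window $(1/10,\,1/4)$.
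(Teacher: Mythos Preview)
Your approach is genuinely different from the paper's and rests on a nice structural observation: the tensor factorization $\Delta_\sqn + \bbI = A_1(\theta_1)\otimes A_2(\theta_2)$ explains both why $P_0(\bthe) = 4096\sin^6(\theta_1/2)\sin^6(\theta_2/2)$ and why the relevant first-order perturbation data at the degenerate strata reduce to the $2\times 2$ column/row compressions of $Q$. Your identification of $r+1/r=\sqrt{10}$, $r-1/r=-\sqrt{6}$, and the resulting block eigenvalues (with the binding value $\approx 0.109$) is correct and gives a clean conceptual reason for the choice of $r$. The paper, by contrast, expands $\det\big(H_\lambda(\bthe)+(1+s\lambda)\bbI\big)=\sum_{k=0}^9 X_k(\bthe,s)\lambda^k$ directly, computes each $X_k$ explicitly, and then bounds the whole sum from below for $|s|<1/10$ via an AM--GM step linking $X_4$ (with $\sin^2(\theta_1/2)\sin^2(\theta_2/2)$ prefactor) and the constant term $X_8$.

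That comparison also exposes the real gap in your proposal. Your first-order analysis lives \emph{only} on the strata $\{\theta_1=0\}$, $\{\theta_2=0\}$, and $\{\bthe=(0,0)\}$; the sentence ``off this set $P_0$ is uniformly positive, so $p(\bthe,\lambda,s)>0$ for small $\lambda$ by continuity'' is not uniform in $\bthe$, and you yourself flag the required ``joint $(\theta_1,\lambda)$-expansion'' as the main obstacle without carrying it out. This is exactly the step the paper's approach handles for free: because the $X_k(\bthe,s)$ are explicit trigonometric polynomials defined globally on $\T^2$, the estimate $X_4\lambda^4+\tfrac12 X_8\lambda^8 + X_6\lambda^6 \geq 0$ via AM--GM gives a single inequality valid for every $\bthe$ simultaneously. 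To push your route through you would need a quantitative Schur-complement bound that tracks how the degenerate $\lambda^6\det(\widetilde Q-s\bbI_6)$ term deforms as $\theta_1$ moves off zero (where the kernel dimension drops), matched against the $\sin^6(\theta_1/2)$ growth of $P_0$; this is doable but is the heart of the proof, not an afterthought. Two smaller points: your treatment of the codimension-two case $\bthe=(0,0)$ is too vague---the $8\times 8$ compression $PQP$ on $\mathbf{1}_9^\perp$ has eigenvalues interlacing those of $Q$ (which include $q_5=0$), so bounding them away from $(-1/10,1/10)$ requires an actual computation, not just an appeal to $\sum q_i=0$ and the row/column bounds; and for the upper inclusion you need spectral points on \emph{both} sides of $-1$, which follows from the $\pm$ symmetry of your block eigenvalues but should be said.
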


Let us observe that the proof below can be refined a bit to yield sharper constants than $1/10$ and $1/4$.

\begin{proof}
For $\bthe = (\theta_1,\theta_2) \in \T^2$, let $H_\lambda(\bthe)$ denote the Floquet matrix corresponding to $H_\lambda$. Ordering the vertices of the fundamental domain as in Figure~\ref{fig:sqn33period}, we obtain:
\[
H_\lambda(\bthe)
=
\begin{bmatrix}
\lambda q_1 & 1 & e^{-i\theta_1} & 1 & 1 & e^{-i\theta_1} & e^{-i \theta_2} & e^{-i\theta_2} &e^{-i(\theta_1+\theta_2)} \\
1 & \lambda q_2 & 1 & 1 & 1 & 1 & e^{-i \theta_2}& e^{-i\theta_2} & e^{-i\theta_2} \\
e^{i \theta_1} & 1 & \lambda q_3& e^{i\theta_1} & 1 & 1 & e^{i(\theta_1-\theta_2)} & e^{-i\theta_2}  & e^{-i\theta_2}\\
1 & 1 & e^{-i\theta_1} & \lambda q_4 & 1 & e^{-i\theta_1} & 1 & 1 &e^{-i\theta_1} \\
1 & 1 & 1 & 1 & \lambda q_5 & 1 & 1 & 1 &1 \\
e^{i\theta_1} & 1 & 1 & e^{i\theta_1} & 1 &  \lambda q_6 & e^{i\theta_1} & 1 & 1  \\
e^{i\theta_2} & e^{i\theta_2} & e^{-i(\theta_1-\theta_2)} & 1 & 1 &e^{-i\theta_1} & \lambda q_7& 1 & e^{-i\theta_1} \\
e^{i\theta_2} & e^{i\theta_2} & e^{i\theta_2} & 1 & 1 & 1 & 1 & \lambda q_8 & 1 \\
e^{i(\theta_1+\theta_2)} & e^{i\theta_2} & e^{i\theta_2} & e^{i\theta_1} &1 &1 & e^{i\theta_1} & 1 &\lambda q_9  
\end{bmatrix}.
\]
For $s\in (-1,1)$, let us consider 
\[
\det(H_{\lambda}(\bthe) + (1+ s\lambda) \bbI)
=
\sum_{k=0}^9 X_k(\bthe, s) \lambda^k.
\]
Our goal is to show $\det(H_{\lambda}(\bthe) + (1+ s\lambda) \bbI)$ never vanishes for sufficiently small $\lambda>0$ and for $|s| < 0.1$.
Direct computations yield
\begin{align*}
X_0(\bthe, s)&=4096 \sin^6\left(\frac{\theta_1}{2}\right) \sin^6\left(\frac{\theta_2}{2}\right)\\
X_1(\bthe, s)&=0\\
X_2(\bthe, s)&=Y_2(s) \sin^4\left(\frac{\theta_1}{2}\right) \sin^4\left(\frac{\theta_2}{2}\right)\\
X_3(\bthe, s)&=Y_3(s) \sin^4\left(\frac{\theta_1}{2}\right) \sin^4\left(\frac{\theta_2}{2}\right)\\
X_4(\bthe, s)&=Y_4(s) \sin^2\left(\frac{\theta_1}{2}\right) \sin^2\left(\frac{\theta_2}{2}\right)\\
X_5(\bthe, s)&=Y_5(s) \sin^2\left(\frac{\theta_1}{2}\right) \sin^2\left(\frac{\theta_2}{2}\right)\\
X_6(\bthe, s)&=Y_{6,1}(s)+Y_{6,2}(s)\cos(\theta_1)+Y_{6,3}(s)\cos(\theta_2)\\ & \qquad +Y_{6,4}(s)\cos(\theta_1)\cos(\theta_2)+Y_{6,5}(s) \sin(\theta_1)\sin(\theta_2)\\
X_7(\bthe, s)&=0\\
X_8(\bthe, s)&=Y_8(s)\\
X_9(\bthe, s)&=Y_9(s),
\end{align*}
in which
\begin{align*}
Y_2(s)&=512 (20-9 s^2)\\
Y_3(s)&=256 (4 - 20 s + 3 s^3)\\
Y_4(s)&=16(364 + 144 s - 504 s^2 + 81 s^4)\\
Y_5(s)&=16 (64 - 196 s - 48 s^2 + 104 s^3 - 9 s^5)\\
Y_{6,1}(s)&=176 + 704 s - 3132 s^2 - 496 s^3 + 1376 s^4 - 96 s^6\\
Y_{6,2}(s)&=-80 + (96 \sqrt{15}-320) s + (1380 +144 \sqrt{15}) s^2 + 208 s^3 - (584+54 \sqrt{15}) s^4 + 42 s^6\\
Y_{6,3}(s)&=-80 - (320 +96 \sqrt{15}) s + (1380 - 144 \sqrt{15}) s^2 + 208 s^3 - (584- 54 \sqrt{15}) s^4 + 42 s^6\\
Y_{6,4}(s)&=-16 - 64 s + 372 s^2 + 80 s^3 - 208 s^4 + 12 s^6\\
Y_{6,5}(s)&=8(2s-1)^3\\
Y_8(s)&=12 + 32 s - 360 s^2 - 512 s^3 + 1025 s^4 + 96 s^5 - 224 s^6 + 9 s^8\\
Y_9(s)&=12 s + 16 s^2 - 120 s^3 - 128 s^4 + 205 s^5 + 16 s^6 - 32 s^7 + s^9.\\
\end{align*}
One simple observation is that 
\begin{align}\label{eq:sumY6}
Y_{6,1}(s)+Y_{6,2}(s)+Y_{6,3}(s)+Y_{6,4}(s)=0.
\end{align}
It is easy to see that for $|s|<0.1$, 
\[Y_2(s), Y_3(s),Y_5(s)>0.\]
It is easy to compute that
\[
Y'_9(s)=12 + 32 s - 360 s^2 - 512 s^3 + 1025 s^4 + 96 s^5 - 224 s^6 + 9 s^8
=
Y_8(s).
\]
Thus, 
\begin{equation} \label{eq:Y9prime(s)}
Y_9'(s)>12 - 32 \times 0.1 - 360 \times (0.1)^2  - 512 \times (0.1)^3 - 96 \times (0.1)^5 - 224\times (0.1)^6{>4.5}>0\end{equation}
for $|s|<0.1$, which implies
\begin{align}\label{eq:Y9s}
Y_9(s)\geq Y_9(-0.1)>-1
\end{align}
for all $|s| < 0.1$. Carefully estimating $Y_4(s)$ and $Y_8(s)$ will help us bound the $\lambda^6$ order term from below using the AM-GM inequality.
\begin{align}\label{eq:Y4Y8s}
Y_4(s)&\geq 16(364-144\times 0.1-504\times (0.1)^2 {-81 \times (0.1)^4})>5500,\\
Y_8(s)&\geq 12  - 32 \times 0.1 - 360 \times (0.1)^2 - 512 \times (0.1)^3 - 96 \times (0.1)^5 - 224 \times (0.1)^6>4.5. \notag
\end{align}
{In fact, since $Y_8 = Y_9'$, the second inequality already follows from \eqref{eq:Y9prime(s)}.}
For the $Y_{6,j}$ terms, we have
\begin{equation}\label{eq:Y6s}
\begin{aligned}
Y_{6,1}(s)&\geq 176 - 704 \times 0.1 - 3132 \times (0.1)^2 - 496 \times (0.1)^3 - 96 \times (0.1)^6>0,\\
  Y_{6,2}(s)&\leq -80 + (96 \sqrt{15}-320) \times 0.1 + (1380 +144 \sqrt{15}) \times (0.1)^2 \\   & \qquad\qquad + 208 \times (0.1)^3  + 42\times (0.1)^6<0 \\
  Y_{6,3}(s)&\leq -80 + (320 + 96 \sqrt{15}) \times 0.1 + (1380 - 144 \sqrt{15}) \times (0.1)^2 \\   & \qquad\qquad+ 208 \times (0.1)^3  + 42 \times (0.1)^6<0, \\
Y_{6,4}(s)&\leq -16 + 64 \times 0.1 + 372 \times (0.1)^2 + 80 \times (0.1)^3  + 12 \times (0.1)^6<0, \\
-14 & \leq Y_{6,5}(s)< 0.
\end{aligned}
\end{equation}
Using \eqref{eq:sumY6} and \eqref{eq:Y6s}, we obtain
\begin{align} \label{eq:X6LB}
\notag
X_6(\bthe) & \geq Y_{6,1}(s)+Y_{6,2}(s)+Y_{6,3}(s)+Y_{6,4}(s) + Y_{6,5}(s)\sin(\theta_1)\sin(\theta_2) \\
\notag
& =
Y_{6,5}(s)\sin(\theta_1)\sin(\theta_2) \\
& \geq
-14 |\sin(\theta_1)\sin(\theta_2)|.
\end{align}
In particular, the first line uses $Y_{6,2}, Y_{6,3}, Y_{6,4} < 0$, the second line uses \eqref{eq:sumY6}, and the final line uses $-14 \leq Y_{6,5} < 0$.

Now we combine our estimates together. 
Note that 
\begin{align}\label{eq:sumX0235}
X_0(\bthe,s)+X_2(\bthe,s)\lambda^2+X_3(\bthe,s)\lambda^3+X_5(\bthe,s)\lambda^5\geq 0.
\end{align}
Using $a^2+b^2\geq 2|ab|$, we obtain the following from \eqref{eq:Y4Y8s}
\[
X_4(\bthe,s)\lambda^4+\frac{1}{2}X_8(\bthe,s)\lambda^8 \geq 2\sqrt{2.25\times 5500} \left|\sin\left(\frac{\theta_1}{2}\right) \sin\left(\frac{\theta_2}{2}\right)\right| \lambda^6.
\]
Using $2|\sin(x/2)|\geq 2|\sin(x/2)\cos(x/2)|=|\sin(x)|$, we obtain from above that
\[X_4(\bthe,s) {\lambda^4} + \frac{1}{2}X_8(\bthe,s) {\lambda^8} 
\geq 
55 |\sin(\theta_1)\sin(\theta_2)|{\lambda^6}.\]
Combining this with \eqref{eq:X6LB}, we have
\begin{equation} \label{eq:sumX486}
X_4(\bthe,s)\lambda^4+\frac{1}{2}X_8(\bthe,s)\lambda^8+X_6(\bthe,s)\lambda^6 
\geq
41 |\sin(\theta_1)\sin(\theta_2)|\lambda^6
\geq 0.
\end{equation}

Finally using \eqref{eq:Y9s} and \eqref{eq:Y4Y8s}, we have
\begin{align}\label{eq:sumY89}
\frac{1}{2}X_8(\bthe,s)\lambda^8+X_9(\bthe,s)\lambda^9=\frac{1}{2}Y_8(s)\lambda^8+Y_9(s)\lambda^9\geq 2.25\lambda^8-\lambda^9>0.25 \lambda^8,
\end{align}
provided that $\lambda<2$.
Combining \eqref{eq:sumX0235}-\eqref{eq:sumY89}, we have
\[\det(H_{\lambda}(\bthe) + (1+ s\lambda) \bbI) \geq 0.25\lambda^8>0,\]
for any $\bthe\in \T^2$ and $|s|<0.1$. 
This proves the lower bound on the gap.

{For the upper bound, observe that $X_j\big((\pi,0),s\big) = 0$ for all $s$ and for every $0 \le j \le 5$ and
\[
X_6\big((\pi,0),\pm 1/4\big)
<-85.
\]
Thus, for small $\lambda > 0$, 
\[
\det\big(H_\lambda(\pi, 0) + (1\pm\lambda/4)\bbI \big) < -85\lambda^6 + O(\lambda^8)<0.
\]
It is also clear that $X_0((0,0),s)=4096$,
which implies 
\[\det \big(H_\lambda(0, 0) +(1\pm\lambda/4)\bbI \big) =4096+O(\lambda)>0.\]
Thus we conclude that 
\[1\pm \frac{\lambda}{4}\in \sigma(H_\lambda),\]
which concludes the proof of the upper bound on the length of the gap.}
\end{proof}

\section*{Acknowledgement}
We would like to thank Svetlana Jitomirskaya for comments on an earlier version of the manuscript, and Tom Spencer for useful discussions.
R.H. would like to thank IAS, Princeton, for its hospitality during the 2017-18 academic year,
and Virginia Tech for its hospitality during which part of the work was done.
R.H. is supported in part by the National Science Foundation under Grant No. DMS-1638352. 
J.F.\ was supported in part by an AMS-Simons Travel Grant 2016--2018.

\end{document}